\documentclass[english,letter paper,12pt,reqno]{article}
\usepackage{etex} 
\usepackage{array}
\usepackage{varwidth}
\usepackage{bussproofs}
\usepackage{epigraph}
\usepackage{stmaryrd}
\usepackage{mathdots}
\usepackage{amsmath, amscd, amssymb, mathrsfs, accents, amsfonts,amsthm}
\usepackage[all]{xy}
\usepackage{mathtools} 
\usepackage{tikz-cd}
\usetikzlibrary{calc}
\usetikzlibrary{positioning}

\AtEndDocument{\bigskip{\footnotesize%
  James Clift \par
  \textsc{Department of Mathematics, University of Melbourne} \par
  \textit{E-mail address}: \texttt{j.clift3@student.unimelb.edu.au} \par
  \vspace{0.3cm}
  Daniel Murfet \par
  \textsc{Department of Mathematics, University of Melbourne} \par  
  \textit{E-mail address}: \texttt{d.murfet@unimelb.edu.au} \par
}}



\DeclarePairedDelimiter\ket{\lvert}{\rangle}
\DeclarePairedDelimiterX\braket[2]{\langle}{\rangle}{#1 \delimsize\vert #2}
\DeclarePairedDelimiterX\inner[2]{\langle}{\rangle}{#1,#2}

\definecolor{Myblue}{rgb}{0,0,0.6}
\usepackage[a4paper,colorlinks,citecolor=Myblue,linkcolor=Myblue,urlcolor=Myblue,pdfpagemode=None]{hyperref}

\SelectTips{cm}{}

\setlength{\evensidemargin}{0.1in}
\setlength{\oddsidemargin}{0.1in}
\setlength{\textwidth}{6.3in}
\setlength{\topmargin}{0.0in}
\setlength{\textheight}{8.5in}
\setlength{\headheight}{0in}

\theoremstyle{definition}
\newtheorem{theorem}{Theorem}[section]
\newtheorem{proposition}[theorem]{Proposition}
\newtheorem{lemma}[theorem]{Lemma}
\newtheorem{corollary}[theorem]{Corollary}

	
	\newtheorem{definition}[theorem]{Definition}
	\newtheorem{example}[theorem]{Example}
	\newtheorem{remark}[theorem]{Remark}


\def\res{\operatorname{Res}}

\def\im{\operatorname{Im}}

\def\Hom{\operatorname{Hom}}

\def\vacu{\ket{\emptyset}}
\def\be{\begin{equation}}
\def\ee{\end{equation}}

\DeclareMathOperator{\Prim}{Prim}

\def\comp{\underline{\textup{comp}}}
\def\contract{\;\lrcorner\;}
\newcommand{\kk}{k} 
             \def\NN{\mathbb{N}}            
\newcommand{\Sum}{\sum\limits}
\newcommand{\Coalg}{\bold{Coalg}}

\makeatletter
\def\Ddots{\mathinner{\mkern1mu\raise\p@
\vbox{\kern7\p@\hbox{.}}\mkern2mu
\raise4\p@\hbox{.}\mkern2mu\raise7\p@\hbox{.}\mkern1mu}}
\makeatother


\begin{document}

\def\ScoreOverhang{1pt}

\makeatletter
\DeclareRobustCommand{\rvdots}{%
  \vbox{
    \baselineskip4\p@\lineskiplimit\z@
    \kern-\p@
    \hbox{}\hbox{.}\hbox{.}\hbox{.}
  }}
\makeatother

\newcommand{\proofvdots}[1]{\overset{\displaystyle #1}{\rvdots}}
\def\Res{\res\!}
\newcommand{\ud}[1]{\operatorname{d}\!{#1}}
\newcommand{\Ress}[1]{\res_{#1}\!}
\newcommand{\cat}[1]{\mathcal{#1}}
\newcommand{\lto}{\longrightarrow}
\newcommand{\xlto}[1]{\stackrel{#1}\lto}
\newcommand{\mf}[1]{\mathfrak{#1}}
\newcommand{\md}[1]{\mathscr{#1}}
\newcommand{\church}[1]{\underline{#1}}
\newcommand{\prf}[1]{\underline{#1}}
\newcommand{\den}[1]{\llbracket #1 \rrbracket}
\def\l{\,|\,}
\def\sgn{\textup{sgn}}
\def\cont{\operatorname{cont}}
\def\counit{\varepsilon}
\def\ptail{\underline{\operatorname{tail}}}
\def\phead{\underline{\operatorname{head}}}
\def\comp{\underline{\textup{comp}}}
\def\mult{\underline{\textup{mult}}}
\def\repeat{\underline{\textup{repeat}}}
\def\contract{\;\lrcorner\;}
\def\<{\langle} \def\>{\rangle}
\newcommand{\id}{\text{id}}
\newcommand{\del}{\partial}
\newcommand{\Inj}{\operatorname{Inj}}

\newcommand{\tTur}{\textbf{Tur}}
\newcommand{\tInt}{\textbf{int}}
\newcommand{\tBint}{\textbf{bint}}
\newcommand{\tList}{\textbf{list}}
\newcommand{\tTint}{\textbf{tint}}
\newcommand{\tBool}{\textbf{bool}}
\newcommand{\tMBool}{\textbf{${_m}$bool}}
\newcommand{\tNBool}{\textbf{${_n}$bool}}
\newcommand{\tSBool}{\textbf{${_s}$bool}}
\newcommand{\tHBool}{\textbf{${_h}$bool}}
\newcommand{\tSList}{{_s}\textbf{list}}

\newcommand{\pTapehead}{\text{\underline{tapehead}}}
\newcommand{\pAbsStep}{\text{\underline{absstep}}}
\newcommand{\pSymbol}{\text{\underline{symbol}}}
\newcommand{\pBoolWeak}{\text{\underline{boolweak}}}
\newcommand{\pLeft}{\text{\underline{left}}}
\newcommand{\pEval}{\text{\underline{eval}}}
\newcommand{\pRight}{\text{\underline{right}}}
\newcommand{\pState}{\text{\underline{state}}}
\newcommand{\pRecomb}{\text{\underline{recomb}}}
\newcommand{\pConcat}{\text{\underline{concat}}}
\newcommand{\pListconcat}{\text{\underline{listconcat}}}
\newcommand{\pTrans}{\text{\underline{trans}}}
\newcommand{\pDecomp}{\text{\underline{decomp}}}
\newcommand{\pHead}{\text{\underline{head}}}
\newcommand{\pTail}{\text{\underline{tail}}}
\newcommand{\pIntCopy}{\text{\underline{intcopy}}}
\newcommand{\pBintCopy}{\text{\underline{bintcopy}}}
\newcommand{\pNBoolCopy}{\text{${_n}$\underline{boolcopy}}}
\newcommand{\pStep}{\text{\underline{step}}}
\newcommand{\pRelStep}{\text{\underline{relstep}}}
\newcommand{\pBoolStep}{\text{\underline{boolstep}}}
\newcommand{\pExtract}{\text{\underline{extract}}}
\newcommand{\pPack}{\text{\underline{pack}}}
\newcommand{\pUnpack}{\text{\underline{unpack}}}
\newcommand{\pBoolstep}{\text{\underline{boolstep}}}
\newcommand{\pRead}{\text{\underline{read}}}
\newcommand{\pMultread}{\text{\underline{multread}}}
\newcommand{\pTensor}{\text{\underline{tensor}}}
\newcommand{\pComp}{\text{\underline{comp}}}
\newcommand{\pRepeat}{\text{\underline{repeat}}}
\newcommand{\pAdd}{\text{\underline{add}}}
\newcommand{\pMult}{\text{\underline{mult}}}
\newcommand{\pPred}{\text{\underline{pred}}}
\newcommand{\pIter}{\text{\underline{iter}}}
\newcommand{\pBooltype}{\text{\underline{booltype}}}
\newcommand{\pCast}{\text{\underline{cast}}}
\newcommand{\proj}{\operatorname{proj}}
\newcommand{\prob}{\bold{P}}
\newcommand{\probc}{\mathscr{P}}
\newcommand{\dkl}{D_{\operatorname{KL}}}

\newcommand{\nl}{\text{nl}} 
\newcommand{\dntn}[1]{\llbracket #1 \rrbracket} 
\newcommand{\dntntup}[1]{\langle\!\langle #1 \rangle\!\rangle} 
\newcommand{\dntnPT}[1]{\dntn{#1}_{\text{PT}}}
\newcommand{\dntnNL}[1]{\dntn{#1}_{\nl}}
\newcommand{\Sym}{\operatorname{Sym}}
\newcommand{\prom}{\operatorname{prom}}
\newcommand{\ND}{\text{ND}} 

\title{Encodings of Turing machines in Linear Logic}
\author{James Clift, Daniel Murfet}

\maketitle

\begin{abstract} We give several different encodings of the step function of a Turing machine in intuitionistic linear logic, and calculate the denotations of these encodings in the Sweedler semantics.
\end{abstract}

\tableofcontents

\section{Introduction}

Turing proved that his machines and terms of the lambda calculus can express the same class of functions by giving an encoding of the step function of a Turing machine as a lambda term, and simulating $\beta$-reduction with Turing machines \cite[Appendix]{turing3}. Although the two models of computation encode the same class of functions, there are nonetheless important aspects of computation that are more accessible in one model or the other. For example, \emph{time complexity} is immediately apparent in the Turing model, but much harder to locate in the theory of lambda terms. One ``native'' approach to time complexity on the side of lambda calculus was developed by Girard \cite{girard_complexity} in the context of second-order linear logic, using an encoding developed by him of Turing machines into this language. The present paper is concerned with an example running in the opposite direction: there is a theory of \emph{derivatives} for lambda terms developed by Ehrhard-Regnier \cite{difflambda} whose analogue we would like to understand on the side of Turing machines. 

We can of course take Turing's encoding of one of his machines $M$ as a lambda term and apply the Ehrhard-Regnier derivative to obtain a term $t_M$ of the differential lambda calculus. However, this answer remains on the side of lambda calculus, and \emph{a priori} does not have an obvious interpretation in terms of the operation of $M$. The question that we really want to answer is: \emph{what about $M$ does the term $t_M$ compute}? One natural approach is to use semantics in order to extract the desired content from $t_M$.
\vspace{0.2cm}

The present paper and its sequel \cite{clift_murfet3} take this approach using linear logic \cite{girard_llogic} rather than lambda calculus, primarily because the Sweedler semantics of linear logic in tensor algebra \cite{murfet_ll, clift_murfet} is well-suited for this purpose. In this paper we discuss various encodings of Turing machines into linear logic, including a modification of Girard's original encoding, and we calculate the denotations of these encodings. In \cite{clift_murfet3} we will analyse the denotations of the Ehrhard-Regnier derivatives of these encodings, and give our answer to the above question about the content of $t_M$.
\vspace{0.2cm}

\textbf{Outline of the paper.} The encoding of a Turing machine $M$ as a proof in linear logic is not unique, and indeed the paper is broadly organised around \emph{four} different variants, each with different tradeoffs:
\begin{itemize}
\item \emph{The Girard encoding} (Section \ref{section: turing machines}) encodes the state of the tape as a pair of binary integers, and is a modified form of the encoding in \cite{girard_complexity} with a more conservative use of second-order quantifiers. The step function of $M$ is encoded as a proof of
\be\label{eq:intro_girard}
{!} \tBint_{A^3} \otimes {!} \tBint_{A^3} \otimes {!} \tBool_{A^3} \vdash {!} \tBint_{A} \otimes {!} \tBint_{A} \otimes {!} \tBool_{A}
\ee
for any type $A$, where $\tBint_A$ denotes the type of binary integers and $\tBool_A$ denotes the type of booleans (see Section \ref{section:encoding_data_proofs}). The denotation of this encoding in the Sweedler semantics is given in Lemma \ref{lemma:denotation_step}.

\item \emph{The Boolean version of the Girard encoding} (Section \ref{section:actionofstep}) encodes the state of the tape as a sequence of booleans, and is defined by converting such a sequence into a pair of binary integers, running the Girard encoding for some number of steps, and then converting the resulting pair of binary integers representing the tape back into a sequence of booleans. The step function of $M$ is encoded by a family of proofs of
\be
a \,{!} \tBool_B \otimes b\, {!} \tBool_B \otimes {!} {_n} \tBool_B \vdash \big({!} \tBool_A\big)^{\otimes c } \otimes \big({!} \tBool_A\big)^{\otimes d} \otimes {!} {_n} \tBool_A
\ee
with $a,b,c,d \ge 1$ giving the bounds of the initial and final tape, and where $B$ is $A^g$ for some function $g$ of $c,d$ and the number of time steps.

\item \emph{The direct Boolean encoding} (Section \ref{section:direct_boolean}) also encodes the state of the tape using booleans and it comes in two flavours, one in which the tape contents are encoded \emph{relative} to the head position, and one in they are encoded in \emph{absolute} coordinates. In the relative case the step function of $M$ is encoded by proofs of
\be
{!}\tSBool_A^{\otimes 2h+1} \otimes {!}\tNBool_A \vdash {!}\tSBool_A^{\otimes 2h+3} \otimes {!}\tNBool_A
\ee
while in the absolute case it is encoded by proofs of
\be\label{eq:absstep_intro}
{!} \tSBool_A^{\otimes h} \otimes {!}\tNBool_A \otimes {!}\tHBool_A \vdash {!} \tSBool_A^{\otimes h} \otimes {!}\tNBool_A \otimes {!}\tHBool_A
\ee
where ${_s} \tBool_A$ is the type of $s$-booleans, used to encode tape symbols, and ${_h} \tBool_A$ is used to track the head position. The denotations of these encodings are given in Remark \ref{remark:denotation_relstep} and Remark \ref{remark:denotation_absstep} respectively.
\end{itemize}
These encodings all belong to a special class of proofs in linear logic which we call the \emph{component-wise plain proofs}. We introduce and give the basic properties of this class in Section \ref{section:plain_proofs}. Finally in Appendix \ref{section:ints_and_bints} we study the denotations of integers and binary integers under the Sweedler semantics, and prove some basic linear independence results that are needed in the main text (and which may be of independent interest).
\vspace{0.2cm}

Let us now briefly explain why we introduce four different encodings, and why Girard's original encoding in \cite{girard_complexity} is not suitable, in its original form, for our applications in differential linear logic. In Girard's original encoding he gives a proof which, when cut against an integer $n$ and an encoding of the initial configuration of the Turing machine, returns the configuration of the Turing machine after $n$ steps. This works by \emph{iterating} an encoding of the one-step transition function, and this iteration requires second-order quantifiers. From the point of view of derivatives the use of second-order is problematic, because it is not clear how derivatives in linear logic should interact with second-order. Our solution was to find a variation of Girard's encoding which introduces the second-order quantifiers only at the very bottom of the proof tree. This is the encoding given in Section \ref{section: turing machines}. In \cite{clift_murfet3} we study the Ehrhard-Regnier derivative of the first-order proof which stops just before the use of second-order quantifiers.

Differentiating proofs involves making infinitesimal variations in inputs, and in the case of a proof encoding the step function of a Turing machine, the most natural infinitesimal variations to make are those describing the contents of an \emph{individual} tape square. This is somewhat orthogonal to the approach of the Girard encoding, which uses two monolithic binary integers to encode the state of the tape. For this reason we were driven to develop the other encodings given above using sequence of booleans. 

\vspace{0.2cm}

\textbf{Related work.} The work of Roversi \cite{roversi} fixes an error in Girard's original encoding, but from our point of view this error is irrelevant, since it concerns whether or not Girard's encoding is typable in \emph{light} linear logic. A different encoding of Turing machines into linear logic is given by Mairson and Terui \cite[Theorem 5]{terui_complexity_cut_elim} which uses booleans based on tensors rather than additives.

\section{Background}

Throughout $k$ is an algebraically closed field, and all vector spaces and coalgebras are defined over $k$. Coalgebras are all coassociative, counital and cocommutative \cite{sweedler}. We write $\Prim(C)$ for the set of primitive elements in a coalgebra $C$.

\subsection{Linear logic and the Sweedler semantics}

As introductory references for linear logic we recommend \cite{mellies, benton_etal} and the survey \cite{murfet_ll}. In this paper \emph{linear logic} will always mean \emph{first-order intuitionistic linear logic} with connectives $\otimes, \&, \multimap, {!}$ and the corresponding introduction rules and cut-elimination transformations from \cite{mellies,benton_etal}. A \emph{proof} is always a proof of a sequent in linear logic. We refer to \cite{clift_murfet, murfet_ll} and \cite{hyland} for the definition of the \emph{Sweedler semantics} of linear logic in the category of vector spaces over $k$, which will be denoted throughout by $\dntn{-}$. Briefly, given formulas (we also use \emph{type} as a synonym for formula) $A,B$ the denotations are determined by the rules
\begin{align*}
\dntn{A \otimes B} &= \dntn{A} \otimes \dntn{B}\,\\
\dntn{A\, \&\, B} &= \dntn{A} \oplus \dntn{B}\,\\
\dntn{A \multimap B} &= \Hom_k(\dntn{A}, \dntn{B})\,\\
\dntn{{!} A} &= {!} \dntn{A}
\end{align*}
and a choice of vector spaces $\dntn{x}$ for atomic formulas $x$, where ${!} V$ denotes the universal cocommutative coassociative counital coalgebra mapping to $V$. The universal morphism is usually denoted $d_V: {!} V \lto V$ or just $d$. We review here the description of ${!} V$ and this universal map when $V$ is finite-dimensional; for the general case see \cite{murfet_coalg}, \cite[\S 5.2]{murfet_ll}.

If $V$ is finite-dimensional then
\[
{!} V = \bigoplus_{P \in V} \Sym_P(V)
\]
where $\Sym_P(V) = \Sym(V)$ is the symmetric coalgebra. If $e_1,\ldots,e_n$ is a basis for $V$ then as a vector space $\Sym(V) \cong k[e_1,\ldots,e_n]$. Given $v_1,\ldots,v_s \in V$, the corresponding tensor in $\Sym_P(V)$ is written using a ket
\[
\ket{v_1,\ldots,v_s}_P := v_1 \otimes \cdots \otimes v_s \in \Sym_P(V)\,.
\]
And in particular, the identity element of $\Sym_P(V)$ is denoted by a vacuum vector
\[
\vacu_P := 1 \in \Sym_P(V)\,.
\]
With this notation the universal map $d: {!} V \lto V$ is defined by
\[
d \vacu_P = P, \quad d\ket{v}_P = v, \quad d\ket{v_1,\ldots,v_s}_P = 0 \quad s > 1\,.
\]
The comultiplication on ${!} V$ is defined by
\[
\Delta \ket{ v_1, \ldots, v_s }_P = \sum_{I \subseteq \{ 1, \ldots, s \}} \ket{ v_I }_P \otimes \ket{ v_{I^c} }_P
\]
where $I$ ranges over all subsets including the empty set, and for a subset $I = \{ i_1, \ldots, i_p \}$ we denote by $v_I$ the sequence $v_{i_1},\ldots,v_{i_p}$, and $I^c$ is the complement of $I$. In particular
\[
\Delta \vacu_P = \vacu_P \otimes \vacu_P\,.
\]
The counit ${!} V \lto k$ is defined by $\vacu_P \mapsto 1$ and $\ket{v_1,\ldots,v_s}_P \mapsto 0$ for $s > 0$.

We write $A^n = A \;\& \;\cdots\; \&\; A$ where there are $n$ copies of $A$. Given a type $A$, $\pi : A$ means that $\pi$ is a proof of $\vdash A$. Given a proof $\pi$ of ${!} \Gamma \vdash B$ we write $\prom(\pi)$ for the proof obtained by applying the promotion rule to $\pi$ to obtain a proof of ${!} \Gamma \vdash {!} B$.

Whenever we talk about a set of proofs $\cat{P}$ of a formula $A$ in linear logic, we always mean a set of proofs \emph{modulo the equivalence relation of cut-elimination}. Given a set of proofs $\cat{N}$ we write $\dntn{\cat{N}}$ for $\{ \dntn{\nu} \}_{\nu \in \cat{N}}$. If proofs $\pi,\pi'$ are equivalent under cut-elimination then $\dntn{\pi} = \dntn{\pi'}$, so the function $\dntn{-}: \cat{P} \lto \dntn{A}$ extends uniquely to a $k$-linear map
\be\label{eq:denotesec}
\xymatrix@C+2pc{ k \cat{P} \ar[r]^-{\dntn{-}} & \dntn{A} }
\ee
where $k \cat{P}$ is the free $k$-vector space generated by the set $\cat{P}$. If $\psi$ is a proof of ${!} A_1,\ldots,{!} A_r \vdash B$ and $\alpha_i$ is a proof of $A_i$ for $1 \le i \le r$ then $\psi(\alpha_1,\ldots,\alpha_r) : B$ denotes the (cut-elimination equivalence class of) the proof obtained by cutting $\psi$ against the promotion of each $\alpha_i$.

\subsection{Encoding data as proofs}\label{section:encoding_data_proofs}

Given a base type $A$ we define
\begin{align*}
\tBool_A &= (A \;\&\; A)\multimap A\,,\\
{_n} \tBool_A &= A^n \multimap A\,,\\
\tInt_A &= {!}(A \multimap A) \multimap (A \multimap A)\,,\\
\tBint_A &= {!}(A \multimap A) \multimap \big({!}(A \multimap A) \multimap (A \multimap A)\big)\,.
\end{align*}
The encodings of booleans, integers and binary integers as proofs in linear logic go back to Girard's original paper \cite{girard_llogic}. For each integer $n \ge 0$ there is a corresponding proof $\underline{n}_A$ of $\tInt_A$ \cite[\S 3.1]{clift_murfet} and for $S \in \{0,1\}^*$ there is a corresponding proof $\underline{S}_A$ of $\tBint_A$ \cite[\S 3.2]{clift_murfet}.

The two values of a boolean correspond to the following proofs $\underline{0}_A$ and $\underline{1}_A$ of $\tBool_A$:
\begin{center}
\AxiomC{}
\UnaryInfC{$A \vdash A$}
\RightLabel{\scriptsize $\& L_0$}
\UnaryInfC{$A\,\&\,A \vdash A$}
\RightLabel{\scriptsize $\multimap R$}
\UnaryInfC{$\vdash \tBool_A$}
\DisplayProof
\qquad 
\AxiomC{}
\UnaryInfC{$A \vdash A$}
\RightLabel{\scriptsize $\& L_1$}
\UnaryInfC{$A\,\&\,A \vdash A$}
\RightLabel{\scriptsize $\multimap R$}
\UnaryInfC{$\vdash \tBool_A$}
\DisplayProof
\end{center}
whose denotations are projection onto the zeroth and first coordinates respectively. Note that we are using the convention that the left introduction rules for $\&$ are indexed by $0$ and $1$, rather than by the more conventional choice of $1$ and $2$, in order to be consistent with the usual assignment of $0$ as `false' and $1$ as `true'. 

The $n$ values of an $n$-boolean correspond to the projection maps $\proj_i: \dntn{A^n} \to \dntn{A}$, where $i \in \{0, ..., n-1\}$. We denote by $\underline{i}_A$ the proof
\begin{center}
\AxiomC{}
\UnaryInfC{$A \vdash A$}
\RightLabel{\scriptsize $\& L_i$}
\UnaryInfC{$A^n \vdash A$}
\RightLabel{\scriptsize $\multimap R$}
\UnaryInfC{$\vdash \tNBool_A$}
\DisplayProof
\end{center}
whose denotation is $\proj_i$. Here, by $\& L_{i}$ $(0 \leq i \leq n-1)$ we mean the rule which introduces $n-1$ new copies of $A$ on the left, such that the original $A$ is at position $i$, indexed from left to right.

\subsection{Cartesian products of coalgebras}\label{section:coalg_prod}

Let $(C, \Delta_C, \varepsilon_C)$ and $(D, \Delta_D, \varepsilon_D)$ be coalgebras. Then $C \otimes D$ is naturally a coalgebra \cite[p.49]{sweedler}, and it is the Cartesian product in the category of coalgebras \cite[p.65]{sweedler}. The following calculations are standard but will play such an important conceptual role in this paper and its sequel that it is worth reproducing them here.

\vspace{0.2cm}

Let $\Coalg_k$ denote the category of $k$-coalgebras.

\begin{lemma} Let $\pi_C: C \otimes D \lto C$ and $\pi_D: C \otimes D \lto D$ to be the composites
\be\label{eq:product_mapscoalg}
\xymatrix@C+2pc{ 
C \otimes D \ar[r]^-{1 \otimes \varepsilon_D} & C \otimes k \cong C
}\,,\qquad
\xymatrix@C+2pc{ 
C \otimes D \ar[r]^-{\varepsilon_C \otimes 1} & k \otimes D \cong D
}
\,.
\ee
These are coalgebra morphisms, and the tuple $(C \otimes D, \pi_C, \pi_D)$ is the Cartesian product of $C,D$ in the category of coalgebras.
\end{lemma}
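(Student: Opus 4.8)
The plan is to verify the universal property of the product directly: given any coalgebra $E$ with morphisms $f\colon E \lto C$ and $g\colon E \lto D$, I must produce a unique coalgebra morphism $h\colon E \lto C \otimes D$ with $\pi_C \circ h = f$ and $\pi_D \circ h = g$. The obvious candidate is $h = (f \otimes g) \circ \Delta_E$, i.e. $h(e) = \sum f(e_{(1)}) \otimes g(e_{(2)})$ in Sweedler notation. So the proof breaks into three parts: first, that the maps $\pi_C, \pi_D$ in \eqref{eq:product_mapscoalg} are themselves coalgebra morphisms; second, that this $h$ is a coalgebra morphism; third, that $h$ satisfies the two required equations and is the unique such morphism.

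First I would check that $\pi_C = (1 \otimes \varepsilon_D)$ followed by the canonical iso $C \otimes k \cong C$ is a coalgebra map. This is immediate from the fact that $\varepsilon_D$ is a coalgebra morphism (from $D$ to the trivial coalgebra $k$), that tensor products of coalgebra morphisms are coalgebra morphisms, and that the unit isomorphism $C \otimes k \cong C$ is an isomorphism of coalgebras; symmetrically for $\pi_D$. Next, for $h = (f \otimes g)\circ\Delta_E$, compatibility with counits follows from $\varepsilon_{C\otimes D}(c \otimes d) = \varepsilon_C(c)\varepsilon_D(d)$ together with counitality of $\Delta_E$, and compatibility with comultiplications is a diagram chase using coassociativity and cocommutativity of $\Delta_E$ (here cocommutativity is what lets us reorder the four tensor factors so that the two copies of $f$ and the two copies of $g$ line up) and the fact that $f,g$ are coalgebra morphisms.

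Then I would compute $\pi_C \circ h$: applying $1 \otimes \varepsilon_D$ to $\sum f(e_{(1)}) \otimes g(e_{(2)})$ gives $\sum f(e_{(1)})\,\varepsilon_D(g(e_{(2)})) = \sum f(e_{(1)})\,\varepsilon_E(e_{(2)})$ since $g$ preserves counits, and this collapses to $f(e)$ by counitality of $\Delta_E$; identifying $C \otimes k$ with $C$ gives $\pi_C \circ h = f$, and symmetrically $\pi_D \circ h = g$. For uniqueness, suppose $h'\colon E \lto C\otimes D$ is any coalgebra morphism with $\pi_C\circ h' = f$, $\pi_D \circ h' = g$. Since $h'$ is a coalgebra map, $(h' \otimes h')\circ \Delta_E = \Delta_{C\otimes D}\circ h'$; post-composing with $\pi_C \otimes \pi_D$ and using that $\Delta_{C \otimes D}$ followed by $\pi_C \otimes \pi_D$ is the identity on $C \otimes D$ (a short computation with the definition of $\Delta_{C\otimes D}$ and the counit identities), we get $h' = (\pi_C \otimes \pi_D)\circ(h'\otimes h')\circ\Delta_E = (f\otimes g)\circ\Delta_E = h$.

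The only mildly delicate point — the "main obstacle," such as it is — is bookkeeping in the verification that $h$ is a comultiplication morphism and in the identity $(\pi_C\otimes\pi_D)\circ\Delta_{C\otimes D} = \id_{C\otimes D}$; both require carefully tracking where cocommutativity is used to permute tensor factors. Everything else is a formal consequence of the universal-property-free facts that $k$ is terminal in $\Coalg_k$, that $-\otimes-$ is a functor, and that $E$ is cocommutative and coassociative. Since the statement says these calculations are "standard," I would present them compactly, spelling out the Sweedler-notation computations but not belaboring the associativity/cocommutativity reshuffles.
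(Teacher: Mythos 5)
Your proposal is correct and follows essentially the same route as the paper: the paper also exhibits the inverse of $\gamma \mapsto (\pi_C\gamma,\pi_D\gamma)$ as $(\alpha,\beta)\mapsto(\alpha\otimes\beta)\circ\Delta_X$, and its key point is precisely your reconstruction identity $\gamma = (\pi_C\gamma\otimes\pi_D\gamma)\circ\Delta_X$, proved by the same counit/cocommutativity diagram chase. The only difference is presentational: the paper leaves the routine verifications (that $\pi_C,\pi_D$ and $(\alpha\otimes\beta)\circ\Delta_X$ are coalgebra morphisms) implicit, whereas you spell them out.
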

\begin{proof}
The bijection
\be\label{eq:coalgebra_cartesian_iso}
\Coalg_k(X, C \otimes D) \lto \Coalg_k(X, C) \times \Coalg_k(X, D)
\ee
sends $\gamma$ to $(\pi_C \gamma, \pi_D \gamma)$ and its inverse sends a pair $(\alpha, \beta)$ to the composite $( \alpha \otimes \beta ) \circ \Delta_X$. The key point in showing that this is a bijection is the observation that any morphism of coalgebras $\gamma: X \lto C \otimes D$ may be reconstructed from its components $\pi_C \gamma, \pi_D \gamma$ by commutativity of the diagram
\[
\begin{tikzcd}
X \arrow[rr, "{\Delta_X}"]\arrow[dd, swap, "{\gamma}"]&& X \otimes X \arrow[d, "{\gamma \otimes \gamma}"]\arrow[rr, "{\pi_C \gamma \otimes \pi_D \gamma}"]&& C \otimes D 
\\
  && C \otimes D \otimes C \otimes D \arrow[rr, "{1 \otimes \varepsilon_D \otimes \varepsilon_C \otimes 1}"]&& C \otimes k \otimes k \otimes D \arrow{u}{\cong}
\\
C \otimes D \arrow[rrrrrrd, swap, "{1_{C \otimes D}}",bend right=6.5]\arrow[rr, "{\Delta_C \otimes \Delta_D}"]&& C \otimes C \otimes D \otimes D \arrow{u}{\cong}[swap]{T}\arrow[rr, "{1 \otimes \varepsilon_C \otimes \varepsilon_D \otimes 1}"]&& C \otimes k \otimes k \otimes D\arrow[drr, "\cong"] \arrow{u}{\cong}[swap]{T} \\
&&&&&& C \otimes D\arrow[uuull, swap, "{1_{C \otimes D}}",bend right=40]
\end{tikzcd}
\]
where $T$ is the twist map.
\end{proof}

In particular, the group-like \cite[p.57]{sweedler} elements decompose according to
\begin{align*}
G(C \otimes D) &\cong \Coalg_k( k, C \otimes D )\\
&\cong \Coalg_k(k, C) \times \Coalg_k(k, D)\\
&\cong G(C) \times G(D)\,.
\end{align*}
This isomorphism sends a pair $(c,d)$ of group-like elements to the group-like element $c \otimes d$ in $C \otimes D$. Similarly for the primitive elements
\begin{align*}\label{eq:primiso}
\Prim(C \otimes D) &\cong \Coalg_k( (k[\varepsilon]/\varepsilon^2)^*, C \otimes D )\\
&\cong \Coalg_k( (k[\varepsilon]/\varepsilon^2)^*, C) \times \Coalg_k( (k[\varepsilon]/\varepsilon^2)^*, D )\\
&\cong \Prim(C) \times \Prim(D)\,.
\end{align*}
A pair of primitive \cite[p.199]{sweedler} elements $x$ over $c$ in $C$ and $y$ over $d$ in $D$ correspond to a pair of morphisms of coalgebras
\begin{align*}
\widetilde{x}: (k[\varepsilon]/\varepsilon^2)^* \lto C\,, & \quad 1^* \mapsto c, \; \varepsilon^* \mapsto x\,\\
\widetilde{y}: (k[\varepsilon]/\varepsilon^2)^* \lto C\,, & \quad 1^* \mapsto d, \;\varepsilon^* \mapsto y\,.
\end{align*}
And the corresponding primitive element in $C \otimes D$ is the image of $\varepsilon^*$ under the map
\begin{gather*}
\xymatrix@C+2pc{
(k[\varepsilon]/\varepsilon^2)^* \ar[r]^-{\Delta} & (k[\varepsilon]/\varepsilon^2)^* \otimes (k[\varepsilon]/\varepsilon^2)^* \ar[r]^-{ \widetilde{x} \otimes \widetilde{y} } & C \otimes D
}\\
\varepsilon^* \longmapsto 1^* \otimes \varepsilon^* + \varepsilon^* \otimes 1^* \longmapsto c \otimes y + x \otimes d\,.
\end{gather*}


\begin{remark} A morphism of coalgebras sends primitive elements to primitive elements. In particular, if $\gamma: X \lto C \otimes D$ is a morphism of coalgebras then the restriction gives a map $\Prim(\gamma): \Prim(C) \lto \Prim(C \otimes D)$, and it is clear  that the diagram
\[
\xymatrix@C+5pc@R+1pc{
\Prim(X) \ar[r]^-{\Big(\begin{smallmatrix} \Prim(\gamma_C) \\ \Prim(\gamma_D) \end{smallmatrix}\Big)}\ar[dr]_-{\Prim(\gamma)} & \Prim(C) \times \Prim(D) \ar[d]^-{\cong}\\
& \Prim(C \otimes D)
}
\]
commutes, where the vertical map is the canonical isomorphism described above. Thus the action of $\gamma$ on primitive elements can be understood component-by-component.
\end{remark}

\section{Plain proofs}\label{section:plain_proofs}

In this section we introduce the class of \emph{plain proofs}. Our encodings of Turing machines in Section \ref{section: turing machines} will be component-wise plain. 

\begin{definition}\label{defn:plain} A proof of a sequent ${!} A_1,\ldots,{!} A_r \vdash B$ for $r \ge 0$ is \emph{plain} if it is equivalent under cut-elimination to
\begin{center}
\AxiomC{$\proofvdots{\pi}$}
\noLine\UnaryInfC{$n_1\, A_1,\ldots,n_r \, A_r \vdash B$}
\RightLabel{\scriptsize der}
\doubleLine\UnaryInfC{$n_1\, {!}A_1, \ldots, n_r \, {!} A_r \vdash B$}
\RightLabel{\scriptsize ctr/wk}
\doubleLine\UnaryInfC{${!}A_1,\ldots,{!}A_r \vdash B$}
\DisplayProof
\end{center}
for some proof $\pi$ and tuple of non-negative integers $\bold{n} = (n_1,\ldots,n_r)$, where for $n_i > 1$ in the final step there is a corresponding contraction, and if $n_i = 0$ the final step involves a weakening. We refer to the integer $n_i$ as the $A_i$-\emph{degree} and $\bold{n}$ as the \emph{degree vector}.
\end{definition}

\begin{example} Binary integers \cite[\S 3.2]{clift_murfet} give plain proofs of $2\,{!}(A \multimap A) \vdash A \multimap A$.
\end{example}

\begin{remark}\label{remark:stable_plain} If $\psi$ is a plain proof as above then:
\begin{itemize}
\item Suppose $A_i = A_j$ for some $i \neq j$ and let $\rho$ be the proof obtained from $\psi$ by contraction on ${!} A_i, {!} A_j$. Then $\rho$ is plain.
\item Let $\rho$ be the proof of ${!} A, {!} A_1, \ldots, {!} A_r \vdash B$ obtained from $\psi$ by weakening in the ${!} A$. Then $\rho$ is plain.
\end{itemize}
\end{remark}

\begin{lemma}\label{lemma:plain_under_comp} Suppose that $\theta_1, \ldots, \theta_r$ are plain proofs with conclusions $A_1,\ldots,A_r$ and that $\psi$ is a plain proof of ${!} A_1,\ldots,{!} A_r \vdash B$. Then the cut of $\psi$ against the promotions of the $\theta_i$ is a plain proof with conclusion $B$.
\end{lemma}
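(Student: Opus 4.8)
The plan is to reduce to a normal-form argument: we want to exhibit a single proof of $!A_1, \ldots, !A_r \vdash B$ that has the shape demanded by Definition \ref{defn:plain} and is cut-equivalent to the cut of $\psi$ against the promotions of the $\theta_i$. First I would replace $\psi$ and each $\theta_i$ by their plain normal forms: write $\psi$ as $\pi$ preceded by a block of dereliction steps and then a block of contraction/weakening steps with degree vector $\mathbf{m} = (m_1, \ldots, m_r)$, and write each $\theta_i$ as $\sigma_i$ followed by dereliction and contraction/weakening with some degree vector. Since the $\theta_i$ have empty left context (conclusions $A_i$, i.e.\ proofs of $\vdash A_i$), their plain form is especially simple: $\sigma_i$ is already a proof of $\vdash A_i$ and the der/ctr/wk block is vacuous, so in fact $\theta_i = \sigma_i$ up to cut-equivalence. (If one prefers to allow a nonempty context hidden in the definition, the same bookkeeping goes through, just with more indices.)

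Next I would compute the cut of $\psi$ against $\prom(\theta_1), \ldots, \prom(\theta_r)$ using the standard cut-elimination transformations. The key move is that a cut of $\prom(\theta_i)$ against a formula $!A_i$ that is acted on by contraction, weakening and dereliction in $\psi$ can be pushed upward: each contraction on $!A_i$ duplicates $\prom(\theta_i)$, each weakening on $!A_i$ deletes it, and the dereliction on $!A_i$ meeting the promotion $\prom(\theta_i)$ reduces — by the promotion/dereliction cut-elimination step — to a cut of $\theta_i$ itself against the corresponding $A_i$ occurrence in $\pi$. Carrying this out for all $i$ simultaneously, the cut of $\psi$ against the promoted $\theta_i$ becomes cut-equivalent to: first form the proof $\pi'$ obtained from $\pi$ by cutting $m_i$ copies of $\theta_i$ against the $m_i$ linear occurrences of $A_i$ (for each $i$); this is a proof of $\vdash B$ (all hypotheses discharged). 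Then the outer structure contributes nothing further, and we are left needing to show this $\pi'$-based proof is plain with conclusion $B$ — but a proof of $\vdash B$ with $r = 0$ is plain with empty degree vector, trivially. So the lemma follows once the cut-elimination reshuffling above is justified.

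The main obstacle — and the step deserving genuine care — is the bookkeeping in the preceding paragraph: verifying that the cut-elimination transformations really do let us commute the cuts against the promotions past the entire der/ctr/wk block, and that the order in which we eliminate these cuts does not matter (confluence of cut-elimination, which holds in this fragment). Concretely one must check the promotion-vs-contraction, promotion-vs-weakening, and promotion-vs-dereliction key cases, and handle the commutative cases where the cut formula is a side formula of some other rule in $\pi$. None of these is hard individually, but one must be careful that after pushing $\prom(\theta_i)$ through a contraction the \emph{two resulting copies} are each again promotions of $\theta_i$ (true, since promotion commutes with contraction on the nose up to cut-equivalence), so the induction on the size of the der/ctr/wk block closes. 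I would organize this as an induction on $\sum_i m_i$ together with the structure of that block, citing the cut-elimination transformations of \cite{mellies, benton_etal} for each case rather than writing them out, and would remark that Remark \ref{remark:stable_plain} already packages the closure of plainness under contraction and weakening, which is the combinatorial heart of why the reshuffling preserves plain shape.
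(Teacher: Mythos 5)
There is a genuine gap, and it is located exactly where you tucked it into a parenthetical. In this lemma ``plain proofs with conclusions $A_1,\ldots,A_r$'' does \emph{not} mean proofs of $\vdash A_i$: a plain proof in the sense of Definition \ref{defn:plain} is a proof of a sequent ${!}\Gamma_i \vdash A_i$ with an arbitrary bang-ed context, and that is the case the lemma is about. This is visible both from the paper's own proof (in the $r=1$ case the proof being promoted is a plain proof of ${!}A \vdash B$, not of $\vdash B$) and from the way the lemma is used in Proposition \ref{prop:cut_componentwiseplain}, where it is applied to $\theta_i = \prom(\psi_i)$ with $\psi_i : {!}A_1,\ldots,{!}A_r \vdash B_i$ and must deliver plainness of $\kappa'_j : s\,{!}A_1,\ldots,s\,{!}A_r \vdash C_j$. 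Under your reading the statement is vacuous --- any proof of $\vdash B$ is plain with empty degree vector, and neither the plainness of $\psi$ nor of the $\theta_i$ is needed --- which should have been a warning sign. Your main argument therefore proves only this trivialized version, and its final step (``all hypotheses discharged, hence trivially plain'') is simply unavailable in the intended setting.

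The mechanism you invoke (promotion against dereliction reduces to a cut of $\theta_i$ into $\pi$; promotion against contraction duplicates $\prom(\theta_i)$; against weakening it is erased) is the right one and is what the paper cites from Melli\`es \S 3.9.3, but the content of the lemma is what happens \emph{after} these reductions when the contexts ${!}\Gamma_i$ are nonempty: duplicating $\prom(\theta_i)$ at a contraction also forces contractions on its context ${!}\Gamma_i$, which your bookkeeping never records, and the resulting proof consists of $\pi$ cut against $n_i$ copies of each $\theta_i$, each copy still carrying its own dereliction/contraction/weakening tail. One must then commute all these tails below the cuts so that the whole proof takes the normal form of Definition \ref{defn:plain}: a single cut of $\pi$ against copies of the cores $\sigma_i$, followed by one block of derelictions and contractions/weakenings, with the degree vector multiplying (the $mn$ in the paper's $r=1$ display). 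That recombination is the actual proof of the lemma; asserting that ``the same bookkeeping goes through, just with more indices'' does not supply it, precisely because the step your argument ends with is the one that fails there.
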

\begin{proof}
In the special case $r = 1$ the cut
\begin{center}
\AxiomC{$\proofvdots{\pi}$}
\noLine\UnaryInfC{$n\, A \vdash B$}
\RightLabel{\scriptsize$n\times$ der}
\doubleLine\UnaryInfC{$n\, {!}A \vdash B$}
\RightLabel{\scriptsize$n-1\times$ ctr}
\doubleLine\UnaryInfC{${!}A \vdash B$}
\RightLabel{\scriptsize prom}
\UnaryInfC{${!} A \vdash {!} B$}
\AxiomC{$\proofvdots{\rho}$}
\noLine\UnaryInfC{$m\, B \vdash C$}
\RightLabel{\scriptsize$m \times$ der}
\doubleLine\UnaryInfC{$m\, {!}B \vdash C$}
\RightLabel{\scriptsize$m-1\times$ ctr}
\doubleLine\UnaryInfC{${!}B \vdash C$}
\RightLabel{\scriptsize cut}
\BinaryInfC{${!} A \vdash C$}
\DisplayProof
\end{center}
is equivalent under cut elimination \cite[\S 3.9.3]{mellies} to a proof of the form
\begin{center}
\AxiomC{$\proofvdots{}$}
\noLine\UnaryInfC{$mn\, A \vdash C$}
\RightLabel{\scriptsize$mn\times$ der}
\doubleLine\UnaryInfC{$mn\, {!}A \vdash C$}
\RightLabel{\scriptsize$mn-1\times$ ctr}
\doubleLine\UnaryInfC{${!}A \vdash C$}
\DisplayProof
\end{center}
which is plain. The general case is similar.
\end{proof}

\begin{definition}\label{defn:plain_comp} A proof
\[
\psi: {!} A_1,\ldots,{!} A_r \vdash {!} B_1 \otimes \cdots \otimes {!} B_s
\]
is \emph{component-wise plain} if there are plain proofs
\[
\psi_i: {!} A_1, \ldots, {!} A_r \vdash B_i
\]
for $1 \le i \le s$ such that $\psi$ is equivalent under cut-elimination to the proof
\begin{center}
\AxiomC{$\proofvdots{\otimes_{i=1}^s \prom(\psi_i)}$}
\noLine\UnaryInfC{$s {!} A_1,\ldots, s {!} A_r \vdash {!} B_1 \otimes \cdots \otimes {!} B_s$}
\RightLabel{\scriptsize ctr}
\doubleLine\UnaryInfC{${!} A_1,\ldots,{!} A_r \vdash {!} B_1 \otimes \cdots \otimes {!} B_s$}
\DisplayProof 
\end{center}
We refer to the $\psi_i$ as the \emph{components} of $\psi$.
\end{definition}

Observe that in the context of the definition the linear map $\dntn{\prom(\psi_i)}$ is a morphism of coalgebras, and the denotation of the proof $\psi$ is precisely the morphism of coalgebras induced by the $\dntn{\prom(\psi_i)}$ into the tensor product of the ${!} \dntn{B_i}$ viewed as the Cartesian product in the category of coalgebras (Section \ref{section:coalg_prod}). At the syntactic level this means in particular that the components $\psi_i$ of a component-wise plain proof $\psi$ may be recovered (of course, up to cut-elimination) by cutting against a series of weakenings and a dereliction. 
\vspace{0.01cm}

This class of component-wise plain proofs is closed under composition:

\begin{proposition}\label{prop:cut_componentwiseplain} Suppose given two component-wise plain proofs
\begin{align*}
\psi: {!} A_1,\ldots,{!} A_r &\vdash {!} B_1 \otimes \cdots \otimes {!} B_s\,,\\
\phi: {!} B_1,\ldots,{!} B_s &\vdash {!} C_1 \otimes \cdots \otimes {!} C_t\,.
\end{align*}
Then the proof
\begin{center}
\AxiomC{$\proofvdots{\psi}$}
\noLine\UnaryInfC{${!} A_1,\ldots,{!} A_r \vdash {!} B_1 \otimes \cdots \otimes {!} B_s$}
\AxiomC{$\proofvdots{\phi}$}
\noLine\UnaryInfC{${!} B_1,\ldots,{!} B_s \vdash {!} C_1 \otimes \cdots \otimes {!} C_t$}
\RightLabel{\scriptsize $\otimes L$}
\doubleLine\UnaryInfC{${!} B_1 \otimes \cdots \otimes {!} B_s \vdash {!} C_1 \otimes \cdots \otimes {!} C_t$}
\RightLabel{\scriptsize cut}
\BinaryInfC{${!} A_1,\ldots,{!} A_r \vdash {!} C_1 \otimes \cdots \otimes {!} C_t$}
\DisplayProof 
\end{center}
which we denote $\phi \l \psi$, is component-wise plain.
\end{proposition}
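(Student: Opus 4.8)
The plan is to exhibit explicit plain components for $\phi \l \psi$ and then check the reassembly equation of Definition~\ref{defn:plain_comp} by cut-elimination. Write $\psi_1,\ldots,\psi_s$ for the components of $\psi$, so $\psi_i$ is a plain proof of ${!} A_1,\ldots,{!} A_r \vdash B_i$, and $\phi_1,\ldots,\phi_t$ for those of $\phi$, so $\phi_j$ is a plain proof of ${!} B_1,\ldots,{!} B_s \vdash C_j$. For $1 \le j \le t$ I would take as the candidate $j$-th component of $\phi \l \psi$ the proof
\[
\chi_j \;:=\; \phi_j \l \psi
\]
of ${!} A_1,\ldots,{!} A_r \vdash C_j$ obtained by applying $\otimes L$ to $\phi_j$ and then cutting against $\psi$.

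First I would show that each $\chi_j$ is plain. Substituting the component-wise plain form of $\psi$ into $\chi_j$, the cut commutes upward past the final contractions on the ${!} A_k$ and then meets the $\otimes R$ rules assembling $\bigotimes_{i=1}^s \prom(\psi_i)$ against the $\otimes L$ on $\phi_j$; the principal $\otimes$ reduction replaces it by $s$ cuts, the $i$-th being a cut of $\phi_j$ against $\prom(\psi_i)$ on ${!} B_i$. Reinstating the contractions, this exhibits $\chi_j$, up to cut-elimination, as the cut of the plain proof $\phi_j$ against the promotions of the plain proofs $\psi_1,\ldots,\psi_s$ followed by contraction on the ${!} A_k$; hence $\chi_j$ is plain by Lemma~\ref{lemma:plain_under_comp} together with Remark~\ref{remark:stable_plain}.

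Next I would verify that $\phi \l \psi$ is equivalent under cut-elimination to the component-wise plain proof assembled from $\chi_1,\ldots,\chi_t$. Substituting the component-wise plain forms of \emph{both} $\psi$ and $\phi$ into $\phi \l \psi$ and running the reductions of the previous paragraph (now with $\phi$ in place of $\phi_j$), the original cut becomes, for each $1 \le i \le s$, a cut of $\prom(\psi_i)$ against $\phi$'s form on ${!} B_i$; pushing this up past the final contractions of $\phi$ duplicates $\prom(\psi_i)$ into every branch $\prom(\phi_j)$. In each branch one is left with a cut of $\prom(\psi_i)$ against the promotion box $\prom(\phi_j)$ whose cut formula ${!} B_i$ lies in the ${!}$-context of the box, and the promotion-against-promotion transformation moves the cut inside the box; iterating over $i$ turns branch $j$ into $\prom$ applied to the cut of $\phi_j$ against $\prom(\psi_1),\ldots,\prom(\psi_s)$. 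Since promotion commutes with contraction, pulling the accumulated contractions to the bottom produces $\bigotimes_{j=1}^t \prom(\chi_j)$ followed by a contraction on the ${!} A_k$, which is exactly the form required by Definition~\ref{defn:plain_comp}.

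I expect the main obstacle to be this last step: one must apply the cut-elimination transformations of \cite{mellies} in the right order — commutation of cut with contraction and with $\otimes R$, the principal $\otimes$ reduction, the promotion-against-promotion reduction, and the commutation of promotion with contraction — while keeping careful track of the (up to $ts$-fold) duplication of the contexts ${!} A_k$, so that the various contractions assemble into precisely the single contraction demanded by Definition~\ref{defn:plain_comp}. Apart from this bookkeeping and Lemma~\ref{lemma:plain_under_comp}, no new ingredients are required.
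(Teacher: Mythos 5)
Your proposal is correct and follows essentially the same route as the paper: your components $\chi_j$ coincide up to cut-elimination with the paper's $\kappa'_j$ (the cut of all $\prom(\psi_i)$ against $\phi_j$), and both arguments reduce to the same sequence of cut-elimination transformations (cut versus contraction, the tensor reductions, the promotion--promotion rule) together with Lemma~\ref{lemma:plain_under_comp}. The only difference is presentational: you verify plainness of the components first and the reassembly second, whereas the paper performs the transformation of $\phi \l \psi$ directly and invokes Lemma~\ref{lemma:plain_under_comp} at the end.
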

\begin{proof}
By hypothesis $\psi, \phi$ are equivalent under cut-elimination to the tensor products of promotions of components $\psi_i, \phi_j$, and so $\phi \l \psi$ is equivalent to a proof
\begin{center}
\AxiomC{$\proofvdots{\otimes_i \prom(\psi_i)}$}
\noLine\UnaryInfC{$s {!} A_1,\ldots,s {!} A_r \vdash {!} B_1 \otimes \cdots \otimes {!} B_s$}
\RightLabel{\scriptsize ctr}
\doubleLine\UnaryInfC{${!} A_1,\ldots,{!} A_r \vdash {!} B_1 \otimes \cdots \otimes {!} B_s$}
\AxiomC{$\proofvdots{\otimes_j \prom(\phi_j)}$}
\noLine\UnaryInfC{$t {!} B_1,\ldots, t{!} B_s \vdash {!} C_1 \otimes \cdots \otimes {!} C_t$}
\RightLabel{\scriptsize ctr}
\doubleLine\UnaryInfC{${!} B_1, \ldots, {!} B_s \vdash {!} C_1 \otimes \cdots \otimes {!} C_t$}
\RightLabel{\scriptsize $\otimes L$}
\doubleLine\UnaryInfC{${!} B_1 \otimes \cdots \otimes {!} B_s \vdash {!} C_1 \otimes \cdots \otimes {!} C_t$}
\RightLabel{\scriptsize cut}
\BinaryInfC{${!} A_1,\ldots,{!} A_r \vdash {!} C_1 \otimes \cdots \otimes {!} C_t$}
\DisplayProof 
\end{center}
The cut-elimination rules of \cite[\S 3.10.2]{mellies} apply to transform this to a proof
\begin{center}
\AxiomC{$\proofvdots{\otimes_i \prom(\psi_i)}$}
\noLine\UnaryInfC{$s {!} A_1,\ldots,s {!} A_r \vdash {!} B_1 \otimes \cdots \otimes {!} B_s$}
\AxiomC{$\proofvdots{\otimes_j \prom(\phi_j)}$}
\noLine\UnaryInfC{$t {!} B_1,\ldots, t{!} B_s \vdash {!} C_1 \otimes \cdots \otimes {!} C_t$}
\RightLabel{\scriptsize ctr}
\doubleLine\UnaryInfC{${!} B_1, \ldots, {!} B_s \vdash {!} C_1 \otimes \cdots \otimes {!} C_t$}
\RightLabel{\scriptsize $\otimes L$}
\doubleLine\UnaryInfC{${!} B_1 \otimes \cdots \otimes {!} B_s \vdash {!} C_1 \otimes \cdots \otimes {!} C_t$}
\RightLabel{\scriptsize cut}
\BinaryInfC{$s{!} A_1,\ldots,s{!} A_r \vdash {!} C_1 \otimes \cdots \otimes {!} C_t$}
\RightLabel{\scriptsize ctr}
\doubleLine\UnaryInfC{${!} A_1,\ldots,{!} A_r \vdash {!} C_1 \otimes \cdots \otimes {!} C_t$}
\DisplayProof 
\end{center}
which by \cite[\S 3.8.1]{mellies} is equivalent under cut-elimination to cutting all the $\prom(\psi_i)$ against $\phi$ and then performing the contractions. But by \cite[\S 3.9.3]{mellies} the resulting proof is equivalent under cut-elimination to cutting $t$ copies of each $\prom(\psi_i)$ against the proof
\begin{center}
\AxiomC{$\proofvdots{\otimes_j \prom(\phi_j)}$}
\noLine\UnaryInfC{$t {!} B_1,\ldots, t{!} B_s \vdash {!} C_1 \otimes \cdots \otimes {!} C_t$}
\DisplayProof 
\end{center}
and then performing the contractions
\begin{center}
\AxiomC{$\proofvdots{}$}
\noLine\UnaryInfC{$st {!} A_1,\ldots, st{!} A_r \vdash {!} C_1 \otimes \cdots \otimes {!} C_t$}
\RightLabel{\scriptsize ctr}
\doubleLine\UnaryInfC{${!} A_1, \ldots, {!} A_r \vdash {!} C_1 \otimes \cdots \otimes {!} C_t$}
\DisplayProof 
\end{center}
Each of these cuts of $\prom(\psi_i)$ against $\otimes_j \prom(\phi_j)$ has as the final rule in the left branch a promotion and as the final rule in the right branch a right tensor introduction. The rules \cite[\S 3.11.1, \S 3.11.2]{mellies} transform the proof into a tensor product of sub-proofs $\kappa_j$ where a fixed $\prom(\phi_j)$ is cut against $\prom(\psi_1),\ldots,\prom(\psi_s)$ to obtain a proof $\kappa_j: s {!} A_1, \ldots, s{!} A_r \vdash {!} C_j$. Then $\kappa_1 \otimes \cdots \otimes \kappa_t$ is subject to the contractions as above. 

Using the (\emph{Promotion}, \emph{Promotion})-rule\footnote{This rule appears to be missing from the cut-elimination transformations in \cite{mellies}.} \cite[\S 5.2]{benton_etal} the proof $\kappa_j$ is equivalent under cut-elimination to the promotion of the proof $\kappa'_j$ which results from cutting of all the $\prom(\psi_i)$ against $\phi_j$. This $\kappa'_j$ is by Lemma \ref{lemma:plain_under_comp} a plain proof. We have now shown that $\phi \l \psi$ is equivalent under cut-elimination to the tensor product over $1 \le j \le t$ of promotions of plain proofs $\kappa'_j: s {!} A_1, \ldots, s {!} A_r \vdash C_j$ followed by the contractions above. Hence $\phi \l \psi$ is component-wise plain.
\end{proof}

\subsection{Denotations}\label{section:denotations_plain}

For the rest of this section suppose given a plain proof
\[
\psi : {!} A_1,\ldots,{!} A_r \vdash B
\]
constructed from $\pi$ as in Definition \ref{defn:plain}. The denotation of $\psi$ is a linear map
\[
\dntn{\psi}: {!} \dntn{A_1} \otimes \cdots \otimes {!} \dntn{A_r} \lto \dntn{B}\,.
\]
Suppose given finite sets of proofs $\cat{P}_i$ of $A_i$ and $\cat{Q}$ of $B$, such that
\be\label{eq:coll_comput}
\Big\{ \pi( X_1,\ldots,X_r ) \l X_i \in \cat{P}_i^{n_i} \Big\} \subseteq \cat{Q}
\ee
where the $n_i$ are as in Definition \ref{defn:plain}. Given a set of proofs $\cat{N}$ we write $\dntn{\cat{N}}$ for $\{ \dntn{\nu} \}_{\nu \in \cat{N}}$. We assume throughout that $\{ \dntn{\nu} \}_{\nu \in \cat{Q}}$ is linearly independent in $\dntn{B}$. In our examples $B$ is one of the standard datatypes and Appendix \ref{section:ints_and_bints} gives us a supply of linearly independent proof denotations. Let us first of all examine the polynomials that arise in evaluating $\dntn{\pi}$. 

Given a function $\gamma: \{1,\ldots,n_i\} \lto \cat{P}_i$ for some $i$, we write
\begin{align}
\rho_\gamma &= \gamma(1) \otimes \cdots \otimes \gamma(n_i) : A_i^{\otimes n_i} \label{eq:rhogamma}\\
\dntn{\rho_\gamma} &= \dntn{ \gamma(1) } \otimes \cdots \otimes \dntn{\gamma(n_i)} \in \dntn{A_i}^{\otimes n_i}\,.
\end{align}
Observe that for $\lambda^{ij}_\rho \in k$, and with $\gamma_i$ ranging over all functions $\{1,\ldots,n_i\} \lto \cat{P}_i$,
\begin{align*}
\dntn{\pi}\Big( \bigotimes_{i=1}^r \bigotimes_{j=1}^{n_i} \sum_{\rho \in \cat{P}_i} \lambda^{ij}_{\rho} \dntn{\rho} \Big) &= \dntn{\pi}\Big( \bigotimes_{i=1}^r \sum_{\gamma_i} \Big\{ \prod_{j=1}^{n_i} \lambda^{ij}_{\gamma_i(j)} \Big\} \dntn{ \rho_{\gamma_i}} \Big)\\
&= \sum_{\gamma_1,\ldots,\gamma_r} \Big\{ \prod_{i=1}^r \prod_{j=1}^{n_i} \lambda^{ij}_{\gamma_i(j)} \Big\} \dntn{\pi}\big( \dntn{\rho_{\gamma_1}} \otimes \cdots \otimes \dntn{\rho_{\gamma_r}} \big)\\
&= \sum_{\gamma_1,\ldots,\gamma_r} \Big\{ \prod_{i=1}^r \prod_{j=1}^{n_i} \lambda^{ij}_{\gamma_i(j)} \Big\} \dntn{\pi\big( \rho_{\gamma_1}, \ldots, \rho_{\gamma_r}\big)}\\
&= \sum_{\tau \in \cat{Q}} \Big\{ \sum_{\gamma_1,\ldots,\gamma_r} \delta_{\tau = \pi(\rho_{\gamma_1},\ldots,\rho_{\gamma_r})} \prod_{i=1}^r \prod_{j=1}^{n_i} \lambda^{ij}_{\gamma_i(j)} \Big\} \dntn{\tau}\,.
\end{align*}
Let us introduce variables $\{ x^{ij}_\rho \}_{1 \le i \le r, 1\le j \le n_i, \rho \in \cat{P}_i}$ so that with
\begin{align*}
\Sym(k\cat{P}_1^{n_1} \oplus \cdots \oplus k\cat{P}_r^{n_r} ) &\cong k\Big[ \{ x^{ij}_\rho \}_{i,j,\rho} \Big]
\end{align*}
we may define an element of this algebra by:

\begin{definition} Given $\tau \in \cat{Q}$ set
\[
g_\pi^\tau := \sum_{\gamma_1,\ldots,\gamma_r} \delta_{\tau = \pi(\rho_{\gamma_1},\ldots,\rho_{\gamma_r})} \prod_{i=1}^r \prod_{j=1}^{n_i} x^{ij}_{\gamma_i(j)}\,.
\]
\end{definition}
In summary, we may compute $\dntn{\pi}$ by these polynomials using the formula
\be\label{eq:calculationdntnpi}
\dntn{\pi}\Big( \bigotimes_{i=1}^r \bigotimes_{j=1}^{n_i} \sum_{\rho \in \cat{P}_i} \lambda^{ij}_{\rho} \dntn{\rho} \Big) = \sum_{\tau \in \cat{Q}} g_\pi^\tau \Big\vert_{x^{ij}_\rho = \lambda^{ij}_\rho} \dntn{\tau}\,.
\ee
Now let us turn to calculating $\dntn{\psi}$ using the morphism of $k$-algebras
\begin{gather}
C: \Sym\big( \bigoplus_{i=1}^r k\cat{P}_i^{n_i} \big) \cong k\Big[ \{ x^{ij}_\rho \}_{i,j,\rho} \Big] \lto k\Big[ \{ x^{i}_\rho \}_{i,j,\rho} \Big] \cong \Sym\big( \bigoplus_{i=1}^r k\cat{P}_i \big) \label{eq:Cmap}\\
x^{ij}_\rho \mapsto x^i_{\rho}\,.\nonumber
\end{gather}
With $\omega_i = \sum_{\rho \in \cat{P}_i} \lambda^i_{\rho} \dntn{\rho}$ we calculate
\begin{align}
\dntn{\psi}\Big( \vacu_{\omega_1} \otimes \cdots \otimes \vacu_{\omega_r} \Big) &= \dntn{\pi}\Big( \bigotimes_{i=1}^r d^{\otimes n_i} \Delta^{n_i-1} \vacu_{\omega_i} \Big) \nonumber\\
&= \dntn{\pi}\Big( \bigotimes_{i=1}^r d^{\otimes n_i} \vacu_{\omega_i}^{\otimes n_i} \Big)\nonumber\\
&= \dntn{\pi}\Big( \bigotimes_{i=1}^r \omega_i^{\otimes n_i} \Big)\nonumber\\
&= \sum_{\tau \in \cat{Q}} C( g^\tau_\pi )\Big\vert_{x^i_\rho = \lambda^i_\rho} \dntn{\tau}\,. \label{eq:pidaggercalc}
\end{align}
Let $\iota$ denote the function
\begin{gather*}
\iota: \prod_{i=1}^r k\cat{P}_i \lto \bigotimes_{i=1}^r {!} \dntn{A_i}\,,\\
\iota\big( \omega_1, \ldots, \omega_r \big) = \bigotimes_{i=1}^r \vacu_{\dntn{\omega_i}}
\end{gather*}
where $k \cat{P}$ is the free vector space on $\cat{P}$.

\begin{proposition}\label{prop:fpi} There is a unique function $F_\psi$ making the diagram
\[
\xymatrix@C+3pc@R+1pc{
{!} \dntn{A_1} \otimes \cdots \otimes {!} \dntn{A_r} \ar[r]^-{\dntn{\psi}} & \dntn{B}\\
k\cat{P}_1 \times \cdots \times k\cat{P}_r \ar[u]^-{\iota}\ar[r]_-{F_\psi} & k\cat{Q} \ar[u]_-{\dntn{-}}
}
\]
commute. Moreover this function is computed by a polynomial, in the sense that it is induced by a morphism of $k$-algebras
\[
f_\psi : \Sym(k \cat{Q}) \lto \Sym( k\cat{P}_1 \oplus \cdots \oplus k \cat{P}_r )\,.
\]
More precisely, if we present the symmetric algebras as polynomial rings in variables
\[
\{ y_\tau \}_{\tau \in \cat{Q}}\,,\qquad \{ x^i_\rho \}_{1 \le i \le r, \rho \in \cat{P}_i}
\]
respectively, then the polynomial $f^\tau_\psi := f_\psi( y_\tau )$ is given by the formula
\be\label{eq:formulaforfpitau}
f_\psi^\tau = \sum_{\gamma_1,\ldots,\gamma_r} \delta_{\tau = \pi(\rho_{\gamma_1},\ldots,\rho_{\gamma_r})} \prod_{i=1}^r \prod_{j=1}^{n_i} x^i_{\gamma_i(j)}\,,
\ee
where $\gamma_i$ ranges over all functions $\{1,\ldots,n_i\} \lto \cat{P}_i$ and we use the notation of \eqref{eq:rhogamma}.
\end{proposition}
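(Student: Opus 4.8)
The plan is to extract both the function $F_\psi$ and the polynomial $f_\psi$ directly from the computation \eqref{eq:pidaggercalc}, which has already been carried out for proofs of the form $\vacu_{\omega_1}\otimes\cdots\otimes\vacu_{\omega_r}$ — that is, exactly on the image of $\iota$. Concretely, since $\Sym(k\cat{Q})$ is the polynomial ring generated by $\{y_\tau\}_{\tau\in\cat{Q}}$, a $k$-algebra morphism out of it is determined by the images of the generators, so I would \emph{define} $f_\psi$ by $f_\psi(y_\tau) := C(g^\tau_\pi)$, where $C$ is the algebra morphism \eqref{eq:Cmap}. Unwinding the definitions of $g^\tau_\pi$ and of $C$ (which sends $x^{ij}_\rho\mapsto x^i_\rho$) shows at once that this is the polynomial displayed in \eqref{eq:formulaforfpitau}, and it is a genuine element of $\Sym(k\cat{P}_1\oplus\cdots\oplus k\cat{P}_r)$ precisely because $C$ has that ring as codomain. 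Identifying, via the chosen bases, the $k$-points of $\Spec$ of each symmetric algebra with the underlying vector space, the morphism $f_\psi$ determines a polynomial map in the opposite direction $F_\psi : k\cat{P}_1\times\cdots\times k\cat{P}_r \lto k\cat{Q}$ whose $\tau$-component, evaluated at $(\omega_1,\ldots,\omega_r)$ with $\omega_i=\sum_\rho\lambda^i_\rho\,\rho$, is $f^\tau_\psi\big|_{x^i_\rho=\lambda^i_\rho}$.

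Next I would check that this $F_\psi$ makes the square commute. Expanding definitions, $\iota(\omega_1,\ldots,\omega_r)=\bigotimes_i\vacu_{\dntn{\omega_i}}$, and \eqref{eq:pidaggercalc} — whose derivation uses only that $\dntn{-}$ is invariant under cut-elimination together with the hypothesis \eqref{eq:coll_comput}, which guarantees each $\pi(\rho_{\gamma_1},\ldots,\rho_{\gamma_r})$ lies in $\cat{Q}$ — yields
\[
\dntn{\psi}\big(\iota(\omega_1,\ldots,\omega_r)\big) \;=\; \sum_{\tau\in\cat{Q}} C(g^\tau_\pi)\big|_{x^i_\rho=\lambda^i_\rho}\dntn{\tau} \;=\; \dntn{-}\big(F_\psi(\omega_1,\ldots,\omega_r)\big),
\]
i.e. $\dntn{\psi}\circ\iota=\dntn{-}\circ F_\psi$. (When some $n_i=0$ the variables $x^i_\rho$ occur in no $f^\tau_\psi$, matching the fact that $\psi$ simply weakens the $i$-th input in Definition \ref{defn:plain}, so no separate treatment of that case is needed.)

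Finally, uniqueness follows from the standing assumption that $\{\dntn{\tau}\}_{\tau\in\cat{Q}}$ is linearly independent in $\dntn{B}$: this makes the linear map $\dntn{-}:k\cat{Q}\to\dntn{B}$ of \eqref{eq:denotesec} injective, so any $G:\prod_i k\cat{P}_i\to k\cat{Q}$ with $\dntn{-}\circ G=\dntn{\psi}\circ\iota$ agrees with $F_\psi$ pointwise. Thus $F_\psi$ exists, is unique, and is computed by the algebra morphism $f_\psi$ as claimed. I do not expect a real obstacle here: the content is already in \eqref{eq:pidaggercalc}, and the only point requiring care is the formal passage between the $k$-algebra morphism $f_\psi$ and the underlying polynomial map $F_\psi$, which — since all vector spaces involved are finite-dimensional and equipped with fixed bases — is the standard duality between morphisms of affine spaces and morphisms of their coordinate rings.
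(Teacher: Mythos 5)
Your proposal is correct and follows essentially the same route as the paper: existence of $F_\psi$ and the formula \eqref{eq:formulaforfpitau} are read off from the calculation \eqref{eq:pidaggercalc} (your $f_\psi(y_\tau) := C(g^\tau_\pi)$ just makes explicit what the paper leaves implicit), and uniqueness follows from injectivity of $\dntn{-}: k\cat{Q} \to \dntn{B}$, guaranteed by the linear independence hypothesis. No gaps.
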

\begin{proof}
Since by hypothesis $\{ \dntn{\nu} \}_{\nu \in \cat{Q}}$ is linearly independent the right hand vertical map (from \eqref{eq:denotesec}) is injective and so the map $F_{\psi}$ is unique if it exists. Existence follows from \eqref{eq:pidaggercalc}, and moreover this also shows that $F_\psi(\bold{a})_\tau = \operatorname{eval}_{x^i_\rho = a^i_\rho}( f_\pi^\tau )$ for all $\bold{a}$ in $\prod_i k \cat{P}_i$.
\end{proof}


\section{The Girard encoding} \label{section: turing machines}

To fix notation we briefly recall the definition of a Turing machine from \cite{arorabarak, sipser_intro_to_theory_of_computation}. Informally speaking, a Turing machine is a computer which possesses a finite number of internal states, and a one dimensional `tape' as memory. We adopt the convention that the tape is unbounded in both directions. The tape is divided into individual squares each of which contains some symbol from a fixed alphabet; at any instant only one square is being read by the `tape head'. Depending on the symbol on this square and the current internal state, the machine will write a symbol to the square under the tape head, possibly change the internal state, and then move the tape head either left or right. Formally,

\begin{definition} \label{defn: turing machine}
A \emph{Turing machine} $M = (\Sigma, Q, \delta)$ is a tuple where $Q$ is a finite set of states, $\Sigma$ is a finite set of symbols called the \emph{tape alphabet}, and \[\delta: \Sigma \times Q \to \Sigma \times Q \times \{\text{left, right}\}\] is a function, called the \emph{transition function}.
\end{definition}

The set $\Sigma$ is assumed to contain some designated blank symbol $\Box$ which is the only symbol that is allowed to occur infinitely often on the tape. Often one also designates a starting state, as well as a special accept state which terminates computation if reached.

If $M$ is a Turing machine, a \emph{Turing configuration} of $M$ is a tuple $\<S, T, q\>$, where $S, T \in \Sigma^*$ and $q \in Q$. This is interpreted as the instantaneous configuration of the Turing machine in the following way. The string $S$ corresponds to the non-blank contents of the tape to the left of the tape head, including the symbol currently being scanned. The string $T$ corresponds to a \textit{reversed} copy of the contents of the tape to the right of the tape head, and $q$ stores the current state of the machine. The reason for $T$ being reversed is a matter of convenience, as we will see in the next section. The \emph{step function}
\[
{}_\delta \textrm{step}: \Sigma^* \times \Sigma^* \times Q \lto \Sigma^* \times \Sigma^* \times Q
\]
sends the current configuration $\<S,T,q\>$ to the configuration ${_\delta} \textrm{step}( S, T, q )$ after one step.

The eventual goal of this section will be to present a method of encoding of Turing machines in linear logic. This is heavily based on work by Girard in \cite{girard_complexity}, which encodes Turing configurations via a variant of second order linear logic called \textit{light} linear logic. The encoding does not use light linear logic in a crucial way, but requires second order in many intermediate steps, making it incompatible with differentiation. We modify this encoding so that it is able to be differentiated (see Remark \ref{remark:second-order-final}), while also filling in some of the details omitted from \cite{girard_complexity}.

\begin{definition} \label{defn: turing configs}
Fix a finite set of states $Q = \{0, ..., n-1\}$, and a tape alphabet\footnote{It is straightforward to modify what follows to allow larger tape alphabets, see Appendix \ref{section:tape_alphabet}. At any rate, any Turing machine can be simulated by one whose tape alphabet is $\{0,1\}$, so this really isn't as restrictive as it might seem \cite{rogozhin_small_univ_turing_machines}.} $\Sigma = \{0,1\}$, with $0$ being the blank symbol. The type of \emph{Turing configurations} on $A$ is:
\[
\tTur_A = {!}\tBint_A \otimes {!}\tBint_A \otimes {!}\tNBool_A.
\]
The configuration $\<S, T, q\>$ is represented by the element 
\[
\dntn{\<S, T, q\>} = \vacu_{\dntn{\underline{S}_A}} \otimes \vacu_{\dntn{\underline{T}_A}}  \otimes \vacu_{\dntn{\underline{q}_A}}  \in \dntn{\tTur_A}.
\]
\end{definition}

Our aim is to simulate a single transition step of a given Turing machine $M$ as a proof ${_\delta}\pStep_A$ of $\tTur_B \vdash \tTur_A$ for some formula $B$ which depends on $A$, in the sense that if said proof is cut against a Turing configuration of $M$ at time $t$, the result will be equivalent under cut elimination to the Turing configuration of $M$ at time step $t+1$. This will be achieved in Theorem \ref{thm: single transition step}. Inspired by \cite{girard_complexity} our strategy will be as follows. Let $\<S\sigma, T\tau, q\>$ be the (initial) configuration of the given Turing machine.
\begin{enumerate}
\item Decompose the binary integers $S \sigma$ and $T \tau$ to extract their final digits, giving $S, T, \sigma$ and $\tau$. Note that $\sigma$ is the symbol currently under the head, and $\tau$ is the symbol immediately to its right.
\item Using the symbol $\sigma$ together with the current state $q \in Q$, compute the new symbol $\sigma'$, the new state $q'$, and the direction to move $d$.
\item If $d = \text{right}$, append $\sigma' \tau$ to $S$. Otherwise, append $\tau \sigma'$ to $T$; remember that the binary integer $T$ is the \textit{reversal} of the contents of the tape to the right of the tape head. This is summarised in Figure \ref{fig: single transition step}.
\end{enumerate}

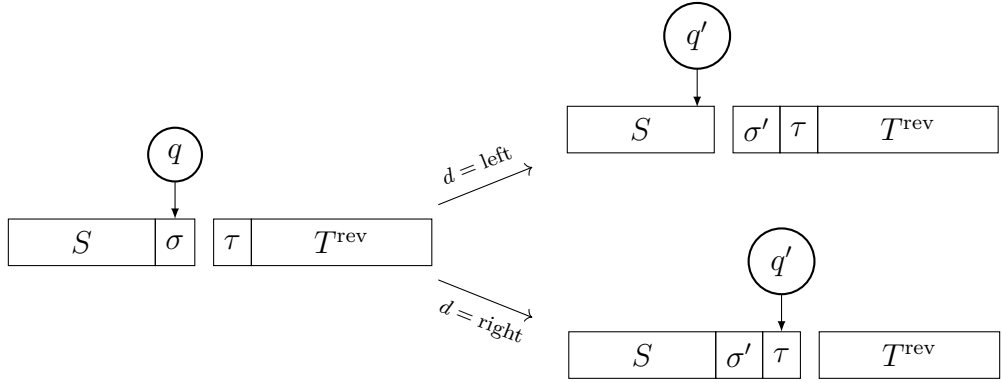
\begin{figure}
\centering
\begin{tikzpicture}[every node/.style={block},
        block/.style={minimum height=1.5em,outer sep=0pt,draw,rectangle,node distance=0pt}]
    \node (A) {$\sigma$};
    \node (B) [left=of A] {$\hspace{0.7cm} S \hspace{0.7cm}$};
    \node (C) [right= 0.25cm of A] {$\tau$};
    \node (D) [right=of C] {$\hspace{0.7cm} T^\text{rev} \hspace{0.7cm}$};
    \node (E) [above = 0.5cm of A,draw=black,thick, circle] {$q$};
    \draw[-latex] (E) -- (A); 
    \node  at (7.73,1.5) (F){$\sigma'$};
    \node (G) [left= 0.25cm of F] {$\hspace{0.7cm} S \hspace{0.7cm}$};
    \node (H) [right=of F] {$\tau$};
    \node (I) [right=of H] {$\hspace{0.7cm} T^\text{rev}  \hspace{0.7cm}$};
    \node (J) [above = 0.5cm of G, xshift=0.75cm,draw=black,thick, circle] {$q'$};
    \draw[-latex] (J) -- ++(0,-0.95);
    \node  at (7.5,-1.5) (K){$\sigma'$};
    \node (L) [left=of K] {$\hspace{0.7cm} S \hspace{0.7cm}$};
    \node (M) [right=of K] {$\tau$};
    \node (N) [right= 0.25cm of M] {$\hspace{0.7cm} T^\text{rev}  \hspace{0.7cm}$};
    \node (O) [above = 0.5cm of M,draw=black,thick, circle] {$q'$};
    \draw[-latex] (O) -- (M); 
    \draw[->] (3.5,0.5) -- (4.75,1) node [sloped, pos=0.5,above, draw=none] {\scriptsize $d = \text{left}$}; 
    \draw[->] (3.5,-0.5) -- (4.75,-1) node [sloped, pos=0.55, below, draw=none] {\scriptsize $d = \text{right}$};
\end{tikzpicture}    
    
\caption{A single transition step of a Turing machine.}\label{fig: single transition step}
\end{figure}

For simplicity in the main body of the text we present the encoding where the head of the Turing machine must either move left or right at each time step, but we explain in Appendix \ref{section:headstill} the minor changes necessary to allow the head to also remain stationary.

\subsection{The encoding} \label{subsection: the encoding}

In order to feed the current symbol into the transition function, it is necessary to extract this digit from the binary integer which represents the tape. To do this we must decompose a binary integer $S' = S\sigma$ of length $l \geq 1$ into two parts $S$ and $\sigma$, the former being a $\tBint$ consisting of the first $l-1$ digits (the \textit{tail}) and the latter being a $\tBool$ corresponding to the final digit (the \textit{head}). 

\begin{proposition} \label{decomposing bints head}
There exists a proof $\pHead_A$ of $\tBint_{A^3} \vdash \tBool_A$ which encodes the function $\dntn{\underline{S \sigma}_{A^3}} \mapsto \dntn{\underline{\sigma}_A}$.
\end{proposition}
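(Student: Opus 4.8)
The plan is to exploit the fact that a binary integer is a Church numeral with two successors, so that $\underline{S\sigma}_{A^3}$ lets us iterate a chosen pair of endomorphisms of the base type; the point of the base type $A^3 = A\,\&\,A\,\&\,A$ is that two of its coordinates will carry a read-only copy of the argument of the boolean being built while the third serves as a one-digit register. Concretely, for $c \in \{0,1\}$ let $\phi_c$ be the proof of $A^3 \multimap A^3$ obtained by $\multimap R$ followed by $\&R$ over the three proofs of $A^3 \vdash A$ given by $\&L_0$, $\&L_1$ and $\&L_c$ (each capped with an axiom), so that $\dntn{\phi_c}$ keeps the first two coordinates of its argument and overwrites the third with the $c$-th. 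Since $\phi_c$ is closed, $\prom(\phi_c)$ is a proof of $\vdash {!}(A^3 \multimap A^3)$, and these are the two ``digit maps'' to be iterated.

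Next I assemble $\pHead_A$. Start with $\multimap R$, introducing a hypothesis $A\,\&\,A$ (usable several times as $\&$ is additive); from it form $\iota : A\,\&\,A \vdash A^3$ by $\&R$ over $\&L_0$, $\&L_1$, $\&L_0$, so that $\dntn{\iota}$ sends $(a_0,a_1)$ to $(a_0,a_1,a_0)$ — the value dropped in the register slot is irrelevant since $S\sigma$ is nonempty — and let $p : A^3 \vdash A$ be $\&L_2$ followed by an axiom. Now apply the hypothesis $\tBint_{A^3}$ with three successive $\multimap L$ rules: the first two feed in $\prom(\phi_0)$ and $\prom(\phi_1)$, turning the hypothesis into an occurrence of $A^3 \multimap A^3$ in the antecedent, and the third uses $\iota$ for its left premise and $p$ for its right, giving $A\,\&\,A,\ \tBint_{A^3} \vdash A$. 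A final $\multimap R$ produces the desired proof $\pHead_A$ of $\tBint_{A^3} \vdash \tBool_A$.

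Finally I compute the denotation by cutting $\pHead_A$ against $\underline{S\sigma}_{A^3}$. By the defining iteration property of the binary-integer encoding \cite[\S 3.2]{clift_murfet}, cutting $\underline{b_1\cdots b_l}_{A^3}$ against $\prom(\phi_0),\prom(\phi_1)$ is, up to cut-elimination, the composite of the proofs $\phi_{b_i}$ in the order prescribed by the digits; writing $g_c = \dntn{\phi_c}$, it follows that $\dntn{\pHead_A}(\dntn{\underline{S\sigma}_{A^3}})$ sends $(a_0,a_1)$ to the third coordinate of the composite of $g_{b_1},\dots,g_{b_l}$ applied to $(a_0,a_1,a_0)$. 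Since each $g_c$ leaves the first two coordinates equal to $(a_0,a_1)$ and resets the third to $a_c$, an easy induction on $l \ge 1$ shows the register ends holding $a_{b_l} = a_\sigma$, whence $\dntn{\pHead_A}(\dntn{\underline{S\sigma}_{A^3}}) = \proj_\sigma = \dntn{\underline{\sigma}_A}$, as required. The two places needing care are the bookkeeping of the nested $\multimap L$ rules against $\tBint_{A^3}$ and — the main obstacle — fixing the precise order in which $\underline{S\sigma}_{A^3}$ composes its digit maps, equivalently checking that it is the \emph{last} digit rather than the first that survives in the register; this is exactly why the base type carries a third coordinate, since with only two the read-only copy of the boolean's argument would itself be overwritten as the iteration runs. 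If the convention of \cite{clift_murfet} composes the other way round, one simply interchanges the register coordinate with a read-only one, or equivalently iterates the mirror-image digit maps.
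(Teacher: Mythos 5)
Your proposal is correct and follows essentially the same route as the paper: the same digit maps on $A^3$ (keep the first two coordinates, overwrite the third register coordinate with the $c$-th), the same injection $(a_0,a_1)\mapsto(a_0,a_1,a_0)$ and projection $\proj_2$, assembled by the same pattern of $\multimap L$ rules against $\tBint_{A^3}$, with the identical observation that only the last-applied digit map determines the register, so the denotation is $\proj_\sigma$. Your worry about the composition convention is resolved as you expect (with the conventions of the paper the final digit of $S\sigma$ is applied last), and the only content in the paper's proof you omit is a side remark that on the empty sequence the construction returns $\underline{0}$, matching the blank-symbol convention.
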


\begin{proof}
The construction we will use is similar to that in \cite[\S 2.5.3]{girard_complexity}. Let $\pi_0, \pi_1$ be the (easily constructed) proofs of $A^3 \vdash A^3$ whose denotations are $\dntn{\pi_0}(x,y,z) = (x,y,x)$ and $\dntn{\pi_1}(x,y,z) = (x,y,y)$ respectively. Similarly let $\rho$ be the proof of $A^2 \vdash A^3$ with denotation $\dntn{\rho}(x,y) = (x,y,x)$. Define by $\pHead_A$ the following proof:
\begin{center}
\AxiomC{$\proofvdots{\pi_{0}}$}
\noLine\UnaryInfC{$A^3 \vdash A^3$}
\RightLabel{\scriptsize $\multimap R$}
\UnaryInfC{$\vdash A^3 \multimap A^3$}
\RightLabel{\scriptsize prom}
\UnaryInfC{$\vdash {!}(A^3 \multimap A^3)$}

\AxiomC{$\proofvdots{\pi_{1}}$}
\noLine\UnaryInfC{$A^3 \vdash A^3$}
\RightLabel{\scriptsize $\multimap R$}
\UnaryInfC{$\vdash A^3 \multimap A^3$}
\RightLabel{\scriptsize prom}
\UnaryInfC{$\vdash {!}(A^3 \multimap A^3)$}

\AxiomC{$\proofvdots{\rho}$}
\noLine\UnaryInfC{$A^2 \vdash A^3$}

\AxiomC{}
\UnaryInfC{$A \vdash A$}
\RightLabel{\scriptsize $\& L_2$}
\UnaryInfC{$A^3 \vdash A$}

\RightLabel{\scriptsize $\multimap L$}
\BinaryInfC{$A^3 \multimap A^3, A^2 \vdash A$}
\RightLabel{\scriptsize $\multimap R$}
\UnaryInfC{$A^3 \multimap A^3 \vdash \tBool_A$}
\RightLabel{\scriptsize $\multimap L$}
\BinaryInfC{$\tInt_{A^3} \vdash \tBool_A$}
\RightLabel{\scriptsize $\multimap L$}
\BinaryInfC{$\tBint_{A^3} \vdash \tBool_A$}
\DisplayProof
\end{center}
where the rule $\&L_2$ introduces two new copies of $A$ on the left, such that the original copy is at position $2$ (that is, the third element of the triple). 

We now show that $\dntn{\pHead_A}(\dntn{\underline{S\sigma}_{A^3}}) = \dntn{\underline{\sigma}_A}$ as claimed. Recall that the denotation $\dntn{\underline{S\sigma}_{A^3}}$ of a binary integer is a function which, given inputs $\alpha$ and $\beta$ of type $A^3 \multimap A^3$ corresponding to the digits zero and one, returns some composite of $\alpha$ and $\beta$. The effect of the two leftmost branches of $\pHead_A$ is to substitute $\dntn{\pi_0}$ for $\alpha$ and $\dntn{\pi_1}$ for $\beta$ in this composite, giving a linear map $\varphi: \dntn{A^3} \to \dntn{A^3}$. The rightmost branch then computes $\proj_2 \circ \,\varphi \circ \dntn{\rho}: \dntn{A^2} \to \dntn{A}$, giving a boolean.

In other words, $\dntn{\pHead_A}(\dntn{\underline{S\sigma}_{A^3}})$ is the element of $\dntn{\tBool_A}$ given by:
\[
(a_0, a_1) \mapsto \proj_2\circ \,\varphi\circ \dntn{\rho}(a_0, a_1) = \proj_2\circ \,\varphi(a_0, a_1, a_0),
\]
where $\varphi$ is the composite of $\dntn{\pi_0}$ and $\dntn{\pi_1}$ as above. Note however that repeated applications of the functions $\dntn{\pi_i}$ only serve to update the final digit of the triple, and thus only the final copy of $\dntn{\pi_i}$ determines the output value. Hence the above simplifies to \[\proj_2 \circ \,\varphi (a_0,a_1, a_0) = \proj_2 \circ \dntn{\pi_\sigma} (a_0, a_1, a_0) = \proj_2(a_0, a_1, a_\sigma) = a_\sigma.\]
Thus $\dntn{\pHead_A}(\dntn{\underline{S\sigma}_{A^3}}) = \proj_{\sigma}$, which is indeed the boolean corresponding to $\sigma$.

Lastly, we consider the special case when $S\sigma$ is the empty list. In this case
\[
\proj_2 \dntn{\rho}(a_0, a_1) = \proj_2 (a_0, a_1, a_0) = a_0
\]
which captures the fact that any symbols outside the working section of the tape are assumed to be the blank symbol, $0$.
\end{proof}

\begin{proposition} \label{decomposing bints tail}
There exists a proof $\pTail_A$ of $\tBint_{A^3} \vdash \tBint_A$ which encodes the function $\dntn{\underline{S \sigma}_{A^3}} \mapsto \dntn{\underline{S}_A}$.
\end{proposition}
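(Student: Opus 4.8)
The plan is to construct $\pTail_A$ in close analogy with $\pHead_A$, extracting the first $l-1$ digits of a binary integer $S\sigma$ of length $l$. The key idea is the same as in Proposition \ref{decomposing bints head}: a binary integer $\dntn{\underline{S\sigma}_{A^3}}$, when fed two maps $\alpha, \beta : A^3 \multimap A^3$ standing for the digits $0$ and $1$, returns the composite of $\alpha$'s and $\beta$'s in the order dictated by $S\sigma$. Here, rather than reading off the last digit, we want to \emph{suppress} the effect of the last application. So I would use the same substitution trick with a $3$-fold base type $A^3$, where one of the three coordinates is a ``dummy'' slot that absorbs the final digit, while the other two coordinates carry an honest copy of the $\tBint$ structure being built on base $A$.

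First I would fix proofs $\pi_0', \pi_1'$ of $(A \multimap A) \otimes (A \multimap A) \otimes A^3 \vdash (A \multimap A) \otimes (A \multimap A) \otimes A^3$ (or an appropriately currified variant) whose denotations, on the pair $(f,g)$ of accumulated maps and the triple in $A^3$, \emph{postcompose} $f$ (resp.\ $g$) into the accumulator while writing the digit into the dummy slot. More precisely, following Girard \cite[\S 2.5.3]{girard_complexity}, the two ${!}$-ed arguments supplied to the inner $\tBint$ will be proofs that update a state consisting of two ``in-progress'' integer-composites together with a scratch coordinate; applying them in the order prescribed by $S\sigma$ builds the map $\dntn{\underline{S}_A}$ in the honest coordinates and dumps $\sigma$ into the scratch coordinate, which is then discarded. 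Then I would close off with $\multimap R$ and $\multimap L$ rules exactly as in $\pHead_A$, arriving at a proof of $\tBint_{A^3} \vdash \tBint_A$, and finally promote/derelict as needed so that $\pTail_A$ has the stated source and target.

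The verification that $\dntn{\pTail_A}(\dntn{\underline{S\sigma}_{A^3}}) = \dntn{\underline{S}_A}$ then runs along the lines of the computation in Proposition \ref{decomposing bints head}: unwind the definition of the denotation of a binary integer as an iterated composite, observe that substituting $\dntn{\pi_i'}$ for the digit arguments produces a composite that, on the honest two coordinates, is exactly the composite of the ``successor by digit $i$'' maps defining $\dntn{\underline{S}_A}$, while the scratch coordinate only ever records the \emph{most recent} digit and is projected away at the end. One also checks the degenerate case $S\sigma = \epsilon$, where the proof should output $\dntn{\underline{\epsilon}_A}$, i.e.\ the empty binary integer; this is automatic since with no digit applications the honest coordinates are left in their initial (identity) state.

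The main obstacle is bookkeeping rather than conceptual: getting the intermediate types right so that the two ${!}(A\multimap A)$-style arguments of the outer $\tBint_{A^3}$ can be threaded through while simultaneously (a) carrying two separate ``in-progress'' copies of an $(A\multimap A)$-composite — one would be the identity-padded copy that becomes $\dntn{\underline{S}_A}$ and the other absorbs the overflow — and (b) keeping everything typable in first-order linear logic with only the connectives $\otimes, \&, \multimap, {!}$. I expect the honest construction to require a slightly more elaborate auxiliary type than the bare $A^3$ used for $\pHead_A$ (since a $\tBint$ output has two ${!}$-ed inputs of its own, each needing to be updated), but the structure of the argument — substitute chosen proofs for the digit-arguments, compute the resulting composite, project away the scratch data — is identical to the proof of Proposition \ref{decomposing bints head}.
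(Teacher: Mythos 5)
Your high-level plan---feed the input $\tBint_{A^3}$ custom digit maps built out of the two ${!}(A \multimap A)$ inputs of the output binary integer, arrange things so that the final digit's contribution is suppressed, and project at the end---is the right shape, and it is indeed how the paper proceeds (the paper's construction is explicitly modelled on the predecessor for $\tInt_A$). However, the one concrete mechanism you give does not actually suppress the last digit. You ask for proofs $\pi_0',\pi_1'$ which ``postcompose $f$ (resp.\ $g$) into the accumulator while writing the digit into the dummy slot''; but if every digit is committed to the honest coordinates as it is read, then after processing all of $S\sigma$ those coordinates carry the composite for $S\sigma$, not for $S$, and a scratch slot remembering only ``the most recent digit'' cannot retroactively remove it. What is needed, and what your write-up never specifies, is the one-step \emph{delay}: in the paper the digit maps substituted into the input are $\dntn{\rho}(\vacu_\gamma)$, $\dntn{\rho}(\vacu_\delta)$ where $\rho : {!}(A \multimap A) \vdash A^3 \multimap A^3$ has $\dntn{\rho}(\vacu_\gamma)(a_0,a_1,a_2) = (a_2,a_2,\gamma a_2)$, so coordinate $2$ holds the full composite so far while coordinates $0,1$ lag one application behind, and the closing proof $\pi$ with $\dntn{\pi}(\varphi)(a) = \proj_0(\varphi(a,a,a))$ projects the lagged coordinate, which is exactly how the outermost (final) digit gets dropped. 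Without an update of this lagged form your construction computes $\dntn{\underline{S\sigma}_A}$ rather than $\dntn{\underline{S}_A}$.

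There is also a typing problem in how you set up the state. You want the iterated state to contain ``two in-progress integer-composites'' (map-valued accumulators of type $A \multimap A$) plus a scratch coordinate, and you concede this will ``require a slightly more elaborate auxiliary type than the bare $A^3$''. But the proposition fixes the premise to be $\tBint_{A^3}$ (and the Remark immediately after it insists on keeping this premise so that $\pHead_A$ and $\pTail_A$ can both be applied to copies of the same binary integer), so the maps you substitute for the digits must be endomorphisms of $A^3$: the state is a triple of \emph{elements} of $A$, not a pair of maps together with a scratch value. The accumulation of composites happens only at the level of denotations, parametrized by the output's digit inputs via dereliction inside $\rho$; no map is ever stored in the state, and in fact $A^2$ would already suffice, with $A^3$ chosen purely for compatibility of premises with $\pHead_A$. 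As written, your construction would prove a sequent $\tBint_B \vdash \tBint_A$ for some larger $B$, which is not the statement being proved.
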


\begin{remark}
This could also be encoded as a proof of $\tBint_{A^2} \vdash \tBint_A$. However it will be much more convenient later if the sequents proven by $\pHead_A$ and $\pTail_A$ have the same premise, since we will need to apply them both to two copies of the same binary integer.
\end{remark}

\begin{proof}
This is largely based on the predecessor for $\tInt_A$. Define $\pi$ to be the proof
\begin{center}
\AxiomC{}
\UnaryInfC{$A \vdash A$}
\AxiomC{}
\UnaryInfC{$A \vdash A$}
\AxiomC{}
\UnaryInfC{$A \vdash A$}
\doubleLine\RightLabel{\scriptsize $\& R$}
\TrinaryInfC{$A \vdash A^3$}
\AxiomC{}
\UnaryInfC{$A \vdash A$}
\RightLabel{\scriptsize $\& L_0$}
\UnaryInfC{$A^3 \vdash A$}
\RightLabel{\scriptsize $\multimap L$}
\BinaryInfC{$A, A^3 \multimap A^3 \vdash A$}
\RightLabel{\scriptsize $\multimap R$}
\UnaryInfC{$A^3 \multimap A^3 \vdash A \multimap A$}
\DisplayProof
\end{center}
which has denotation:
\[
\dntn{\pi}(\varphi)(a) = \proj_0(\varphi(a, a, a)).
\]
Define $\rho$ to be the following proof:
\begin{center}
\AxiomC{}
\UnaryInfC{$A \vdash A$}
\RightLabel{\scriptsize $\&L_2$}
\UnaryInfC{$A^3 \vdash A$}
\RightLabel{\scriptsize weak}
\UnaryInfC{$A^3, {!}(A \multimap A) \vdash A$}

\AxiomC{}
\UnaryInfC{$A \vdash A$}
\RightLabel{\scriptsize $\&L_2$}
\UnaryInfC{$A^3 \vdash A$}
\RightLabel{\scriptsize weak}
\UnaryInfC{$A^3, {!}(A \multimap A) \vdash A$}

\AxiomC{}
\UnaryInfC{$A \vdash A$}
\AxiomC{}
\UnaryInfC{$A \vdash A$}
\RightLabel{\scriptsize $\multimap L$}
\BinaryInfC{$A, A \multimap A \vdash A$}
\RightLabel{\scriptsize $\& L_2$}
\UnaryInfC{$A^3, A \multimap A \vdash A$}
\RightLabel{\scriptsize der}
\UnaryInfC{$A^3, {!}(A \multimap A) \vdash A$}

\doubleLine\RightLabel{\scriptsize $\& R$}
\TrinaryInfC{$A^3, {!}(A \multimap A) \vdash A^3$}
\RightLabel{\scriptsize $\multimap R$}
\UnaryInfC{${!}(A \multimap A) \vdash A^3 \multimap A^3$}
\DisplayProof
\end{center}
The denotation $\dntn{\rho}$ is
\[
\dntn{\rho}(\ket{\alpha_1, ..., \alpha_s}_\gamma)(a_0, a_1, a_2) = \begin{cases}(a_2, a_2, \gamma a_2) & s = 0 \\ (a_2, a_2, \alpha_1 a_2) & s = 1 \\ (a_2, a_2, 0) & s > 1.\end{cases}
\]
Finally, define $\pTail_A$ to be the following proof:
\begin{center}
\small
\AxiomC{$\proofvdots{\rho}$}
\noLine\UnaryInfC{${!}(A \multimap A) \vdash A^3 \multimap A^3$}
\RightLabel{\scriptsize prom}
\UnaryInfC{${!}(A \multimap A) \vdash {!}(A^3 \multimap A^3)$}

\AxiomC{$\proofvdots{\rho}$}
\noLine\UnaryInfC{${!}(A \multimap A) \vdash A^3 \multimap A^3$}
\RightLabel{\scriptsize prom}
\UnaryInfC{${!}(A \multimap A) \vdash {!}(A^3 \multimap A^3)$}

\AxiomC{$\proofvdots{\pi}$}
\noLine\UnaryInfC{$A^3 \multimap A^3 \vdash A \multimap A$}

\RightLabel{\scriptsize $\multimap L$}
\BinaryInfC{${!}(A \multimap A), \tInt_{A^3} \vdash A \multimap A$}
\RightLabel{\scriptsize $\multimap L$}
\BinaryInfC{${!}(A \multimap A), {!}(A \multimap A), \tBint_{A^3} \vdash A \multimap A$}
\doubleLine\RightLabel{\scriptsize $2 \times \multimap R$}
\UnaryInfC{$\tBint_{A^3} \vdash \tBint_A$}
\DisplayProof
\end{center}
Evaluated on the binary integer $S$, this gives a binary integer $T$ which if fed two vacuum vectors $\vacu_\gamma$ and $\vacu_\delta$ (corresponding to the digits 0, 1) will return the composite $\dntn{A} \to \dntn{A}$ obtained by substituting $\dntn{\rho} \vacu_\gamma$ and $\dntn{\rho} \vacu_\delta$ for each copy of the digits 0 and 1 respectively in $S$, and then finally keeping the $0$th projection by the left introduction of $\pi$.

As an example, suppose that the binary integer $S$ is $0010$. Then the corresponding linear map $\dntn{A} \to \dntn{A}$ is
\[
a \mapsto \proj_0(\tilde{\gamma}\circ \tilde{\delta} \circ \tilde{\gamma} \circ\tilde{\gamma}(a,a,a))
\]
where $\tilde{\gamma} = \dntn{\rho} \vacu_\gamma$, which is the morphism $(a_0,a_1,a_2) \mapsto (a_2, a_2, \gamma a_2)$, and similarly for $\tilde{\delta}$. Thus, we have:
\begin{align*}
\proj_0(\tilde{\gamma}\circ \tilde{\delta} \circ \tilde{\gamma} \circ\tilde{\gamma}(a,a,a)) 
&= \proj_0(\tilde{\gamma}\circ \tilde{\delta} \circ \tilde{\gamma}(a, a, \gamma(a)))
\\&= \proj_0(\tilde{\gamma}\circ \tilde{\delta}(\gamma(a), \gamma(a), \gamma\gamma(a)))
\\&= \proj_0(\tilde{\gamma}(\gamma\gamma(a),\gamma\gamma(a), \delta\gamma\gamma(a)))
\\&= \proj_0(\delta\gamma\gamma(a),\delta\gamma\gamma(a), \gamma\delta\gamma\gamma(a))
\\&= \delta\gamma\gamma(a).
\end{align*}
\end{proof}

When fed through the decomposition steps, the base type of the binary integers changes from $A^3$ to $A$. We therefore also need to modify the base type of the $n$-boolean representing the state, in order to keep the base types compatible.

\begin{lemma} \label{fixing_bool_types}
There exists a proof ${_n}\pBooltype_A$ of $\tNBool_{A^3} \vdash \tNBool_A$ which converts an $n$-boolean on $A^3$ to the equivalent $n$-boolean on $A$; that is, it encodes $\dntn{\underline{i}_{A^3}} \mapsto \dntn{\underline{i}_A}$.
\end{lemma}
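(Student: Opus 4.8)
The plan is to mimic the auxiliary proofs $\rho$ used in Propositions \ref{decomposing bints head} and \ref{decomposing bints tail}. The key observation is that there is a ``diagonal'' $\dntn{A}\to\dntn{A^3}$, $a\mapsto (a,a,a)$, together with a projection $\dntn{A^3}\to\dntn{A}$, $(x,y,z)\mapsto x$, whose composite is the identity on $\dntn{A}$. Conjugating the input $n$-boolean by these two maps (the diagonal applied coordinate-wise to the $n$ arguments, the projection applied to the output) sends $\proj_i$ on $\dntn{A^3}^{\oplus n}$ to $\proj_i$ on $\dntn{A}^{\oplus n}$, which is exactly the behaviour demanded of ${_n}\pBooltype_A$.

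Concretely, first I would construct an auxiliary proof $\rho_n$ of $A^n \vdash (A^3)^n$ whose denotation is the coordinate-wise diagonal
\[
\dntn{\rho_n}(a_0,\ldots,a_{n-1}) = \big((a_0,a_0,a_0),\ldots,(a_{n-1},a_{n-1},a_{n-1})\big).
\]
This is built by applying $\& R$ repeatedly to reduce $A^n \vdash (A^3)^n = \underbrace{A^3 \,\&\, \cdots \,\&\, A^3}_{n}$ to the $n$ sequents $A^n \vdash A^3$, then $\& R$ twice more to reduce the $j$-th of these to three copies of $A^n \vdash A$, each obtained from the axiom $A \vdash A$ by the rule $\& L_j$ (the same index $j$ in all three copies of the $j$-th block). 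Then ${_n}\pBooltype_A$ is defined to be
\begin{center}
\AxiomC{$\proofvdots{\rho_n}$}
\noLine\UnaryInfC{$A^n \vdash (A^3)^n$}
\AxiomC{}
\UnaryInfC{$A \vdash A$}
\RightLabel{\scriptsize $\& L_0$}
\UnaryInfC{$A^3 \vdash A$}
\RightLabel{\scriptsize $\multimap L$}
\BinaryInfC{$(A^3)^n \multimap A^3, A^n \vdash A$}
\RightLabel{\scriptsize $\multimap R$}
\UnaryInfC{$\tNBool_{A^3} \vdash \tNBool_A$}
\DisplayProof
\end{center}

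Finally I would compute the denotation: unwinding the $\multimap L$ and $\multimap R$ rules, $\dntn{{_n}\pBooltype_A}$ sends a linear map $f\colon \dntn{A^3}^{\oplus n}\to\dntn{A^3}$ to the element of $\dntn{\tNBool_A}$ given by $(a_0,\ldots,a_{n-1})\mapsto \proj_0\big(f(\dntn{\rho_n}(a_0,\ldots,a_{n-1}))\big)$. Specialising to $f = \dntn{\underline{i}_{A^3}} = \proj_i$ gives $(a_0,\ldots,a_{n-1})\mapsto \proj_0(a_i,a_i,a_i) = a_i = \proj_i(a_0,\ldots,a_{n-1}) = \dntn{\underline{i}_A}(a_0,\ldots,a_{n-1})$, as required. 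I do not expect a genuine obstacle: the construction is entirely parallel to the proofs $\rho$ used earlier, and the only points needing care are bookkeeping in $\rho_n$ (using a consistent index $j$ across each block of three $\& L_j$) and noting that, because all three entries fed through $\& L_0$ are equal, any single projection $\dntn{A^3}\to\dntn{A}$ recovers the diagonal entry.
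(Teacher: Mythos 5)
Your construction is correct and is essentially identical to the paper's own proof: your $\rho_n$ is exactly the $\& R$-combination of the projections-onto-diagonal proofs $\pi_i : A^n \vdash A^3$ used in the paper, and the subsequent $\& L_0$, $\multimap L$, $\multimap R$ steps and the denotation computation coincide with the paper's argument.
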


\begin{proof}
For $i \in \{0, ..., n-1\}$, let $\pi_i$ be the proof of $A^n \vdash A^3$ whose denotation is $(a_0, ..., a_{n-1}) \mapsto (a_i, a_i, a_i)$. Define ${_n}\pBooltype_A$ as the proof:
\begin{center}
\AxiomC{$\proofvdots{\pi_0}$}
\noLine\UnaryInfC{$A^n \vdash A^3$}
\AxiomC{$\cdots$}
\AxiomC{$\proofvdots{\pi_{n-1}}$}
\noLine\UnaryInfC{$A^n \vdash A^3$}
\RightLabel{\scriptsize $\& R$}
\doubleLine\TrinaryInfC{$A^n \vdash (A^3)^n$}
\AxiomC{}
\UnaryInfC{$A \vdash A$}
\RightLabel{\scriptsize $\& L_0$}
\UnaryInfC{$A^3 \vdash A$}
\RightLabel{\scriptsize $\multimap L$}
\BinaryInfC{$A^n, \tNBool_{A^3} \vdash A$}
\RightLabel{\scriptsize $\multimap R$}
\UnaryInfC{$\tNBool_{A^3} \vdash \tNBool_A$}
\DisplayProof
\end{center}
The denotation of ${_n}\pBooltype_A$ is the function
\[
\dntn{{_n}\pBooltype_A}(\varphi)(a_0, ..., a_{n-1}) = \proj_0 \circ \,\varphi((a_0, a_0, a_0), ..., (a_{n-1}, a_{n-1}, a_{n-1})),
\]
and hence $\dntn{{_n}\pBooltype_A}(\dntn{\underline{i}_{A^3}}) = \dntn{\underline{i}_{A}}$.
\end{proof}

We next encode the transition function $\delta: \Sigma \times Q \to \Sigma \times Q \times \{\text{left, right}\}$ of a given Turing machine.

\begin{lemma}\label{encoding functions of n bools}
Given any function $f: \{0, ..., n-1\} \to \{0, ..., m-1\}$, there exists a proof $F$ of $\tNBool_A \vdash \tMBool_A$ which encodes $f$.
\end{lemma}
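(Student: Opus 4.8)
The plan is to mimic the constructions already used for $\pHead_A$ and ${_n}\pBooltype_A$: encode $f$ by using the input $n$-boolean (a proof of $A^n \multimap A$, whose denotation is some projection $\proj_i$) to select, from an $m$-tuple of carefully chosen vectors, the one corresponding to $f(i)$, and then wrap the result so that it becomes a proof of $\tMBool_A = A^m \multimap A$.

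First I would introduce, for each $j \in \{0,\dots,m-1\}$, a proof $\pi_j$ of $A^m \vdash A$ whose denotation is the projection $\proj_j : \dntn{A^m} \to \dntn{A}$; concretely this is just the axiom $A \vdash A$ followed by $\&L_j$. Then I would assemble these into a single proof of $A^m \vdash A^n$ using $\&R$ with the $i$-th branch being $\pi_{f(i)}$, i.e. a proof $g$ with $\dntn{g}(a_0,\dots,a_{m-1}) = (a_{f(0)}, a_{f(1)},\dots,a_{f(n-1)})$. Now define $F$ to be the proof
\begin{center}
\AxiomC{$\proofvdots{g}$}
\noLine\UnaryInfC{$A^m \vdash A^n$}
\AxiomC{}
\UnaryInfC{$A \vdash A$}
\RightLabel{\scriptsize $\multimap L$}
\BinaryInfC{$A^m, \tNBool_A \vdash A$}
\RightLabel{\scriptsize $\multimap R$}
\UnaryInfC{$\tNBool_A \vdash \tMBool_A$}
\DisplayProof
\end{center}
where on the left of the $\multimap L$ rule we cut $g : A^m \vdash A^n$ against the axiom on $A$, so that the premise $\tNBool_A = A^n \multimap A$ is consumed and we are left with $A^m$ as hypothesis and $A$ as conclusion; the final $\multimap R$ then discharges $A^m$ to give $\tMBool_A$.

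The remaining step is to verify the denotation. By the semantics of $\multimap L$ and $\multimap R$, $\dntn{F}$ sends $\varphi \in \dntn{\tNBool_A} = \Hom_k(\dntn{A}^{\oplus n}, \dntn{A})$ to the map $(a_0,\dots,a_{m-1}) \mapsto \varphi\big(\dntn{g}(a_0,\dots,a_{m-1})\big) = \varphi(a_{f(0)},\dots,a_{f(n-1)})$. Applying this to $\varphi = \dntn{\underline{i}_A} = \proj_i$ gives the map $(a_0,\dots,a_{m-1}) \mapsto a_{f(i)} = \proj_{f(i)}(a_0,\dots,a_{m-1})$, which is exactly $\dntn{\underline{f(i)}_A}$. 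Hence $F$ encodes $f$ as required. I do not anticipate any real obstacle here — the only mild subtlety is getting the bookkeeping of the $\&R$ branches and the $\multimap L$ cut right so that the base types and arities line up (an $n$-fold $\&R$ producing $A^n$, matched against the $A^n$ in $\tNBool_A = A^n \multimap A$), but this is entirely parallel to the earlier lemmas.
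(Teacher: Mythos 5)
Your construction is exactly the paper's proof: the same $\&L_{f(i)}$ branches assembled by $\&R$ into a proof of $A^m \vdash A^n$, consumed by $\multimap L$ against an axiom and closed with $\multimap R$, with the identical denotation computation $\dntn{F}(\varphi)(a_0,\dots,a_{m-1}) = \varphi(a_{f(0)},\dots,a_{f(n-1)})$. No gaps; this is correct and matches the paper's argument.
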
 

\begin{proof}

Let $F$ be the following proof
\begin{center}
\AxiomC{}
\UnaryInfC{$A \vdash A$}
\RightLabel{\scriptsize $\&L_{f(1)}$}
\UnaryInfC{$A^m \vdash A$}
\AxiomC{}
\noLine\UnaryInfC{$...$}
\AxiomC{}
\UnaryInfC{$A \vdash A$}
\RightLabel{\scriptsize $\&L_{f(n)}$}
\UnaryInfC{$A^m \vdash A$}

\RightLabel{\scriptsize $\&R$}
\doubleLine\TrinaryInfC{$A^m \vdash A^n$}

\AxiomC{}
\UnaryInfC{$A \vdash A$}

\RightLabel{\scriptsize $\multimap L$}
\BinaryInfC{$A^m, \tNBool_A \vdash A$}
\RightLabel{\scriptsize $\multimap R$}
\UnaryInfC{$\tNBool_A \vdash \tMBool_A$}

\DisplayProof
\end{center}
The denotation of $F$ is, for $\varphi \in \dntn{\tNBool_A}$ and $(a_0, ..., a_{m-1}) \in \dntn{A^m}$:
\[
\dntn{F}(\varphi)(a_0, ..., a_{m-1}) = \varphi(a_{f(0)}, ..., a_{f(n-1)}).
\]
In particular, this means that $\dntn{F}(\dntn{\underline{i}_A})(a_0, ..., a_{m-1}) = \proj_i(a_{f(0)}, ..., a_{f(n-1)}) = a_{f(i)}$, and hence $\dntn{F}(\dntn{\underline{i}_A}) = \proj_{f(i)} = \dntn{\underline{f(i)}_A}$ as desired.
\end{proof}

\begin{proposition}\label{encoding transition functions}
Given a transition function $\delta: \Sigma \times Q \to \Sigma \times Q \times \{\text{left, right}\}$, write $\delta_i$ for the component $\proj_i \circ \,\delta$ ($i \in \{0,1,2\}$). Then there exists proofs 
\begin{align*}
{^0_\delta}\pTrans_A:&  \; \tBool_A, \tNBool_A \vdash \tBool_A  \\
{^1_\delta}\pTrans_A:& \; \tBool_A, \tNBool_A \vdash \tNBool_A  \\
{^2_\delta}\pTrans_A:& \; \tBool_A, \tNBool_A \vdash \tBool_A  
\end{align*} which encode $\delta_i$, for $i = 0,1,2$. We are using the convention that $\text{left} = 0$ and $\text{right} = 1$.
\end{proposition}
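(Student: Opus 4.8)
The plan is to reduce the encoding of each component $\delta_i$ to the previously established Lemma \ref{encoding functions of n bools}, after suitably repackaging the pair of inputs $(\tBool_A, \tNBool_A)$ as a single $2n$-boolean. Concretely, first I would observe that a boolean is the same data as a $2$-boolean (both $\tBool_A$ and $\tTint_A$ with $n=2$ unfold to $(A \,\&\, A) \multimap A$), so a pair consisting of a symbol $\sigma \in \{0,1\}$ and a state $q \in \{0,\ldots,n-1\}$ can be identified with the single index $\sigma \cdot n + q \in \{0,\ldots,2n-1\}$. I would then write down a proof $\pPack$ of $\tBool_A, \tNBool_A \vdash \tTint_A$ (with $2n$ in place of $n$) whose denotation sends $(\dntn{\underline{\sigma}_A}, \dntn{\underline{q}_A})$ to $\dntn{\underline{\sigma n + q}_A}$; this is a small hand-built proof in the same spirit as the $\pi_i$ and $\rho$ proofs used in Propositions \ref{decomposing bints head} and \ref{decomposing bints tail}, essentially forming the appropriate ``product'' of the two projection maps.

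Next, for each $i \in \{0,1,2\}$ define a function $\tilde\delta_i : \{0,\ldots,2n-1\} \to \{0,\ldots,m_i-1\}$, where $m_0 = 2$, $m_1 = n$, $m_2 = 2$, by $\tilde\delta_i(\sigma n + q) = \delta_i(\sigma, q)$. By Lemma \ref{encoding functions of n bools} there is a proof $F_i$ of $\tTint_A \vdash \tTint[m_i]_A$ encoding $\tilde\delta_i$ (recalling that $\tTint[2]_A = \tBool_A$). Then ${^i_\delta}\pTrans_A$ is defined as the cut of $\pPack$ against (the left-introduced form of) $F_i$:
\begin{center}
\AxiomC{$\proofvdots{\pPack}$}
\noLine\UnaryInfC{$\tBool_A, \tNBool_A \vdash \tTint[2n]_A$}
\AxiomC{$\proofvdots{F_i}$}
\noLine\UnaryInfC{$\tTint[2n]_A \vdash \tTint[m_i]_A$}
\RightLabel{\scriptsize cut}
\BinaryInfC{$\tBool_A, \tNBool_A \vdash \tTint[m_i]_A$}
\DisplayProof
\end{center}
The denotation computation is then purely formal: $\dntn{{^i_\delta}\pTrans_A}(\dntn{\underline{\sigma}_A}, \dntn{\underline{q}_A}) = \dntn{F_i}(\dntn{\underline{\sigma n + q}_A}) = \dntn{\underline{\tilde\delta_i(\sigma n + q)}_A} = \dntn{\underline{\delta_i(\sigma,q)}_A}$, using the denotation formula from Lemma \ref{encoding functions of n bools} and functoriality of $\dntn{-}$ under cut-elimination.

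The only real work is constructing $\pPack$ and verifying its denotation; everything else is bookkeeping. The mild subtlety there is getting the base types to line up — one wants the two inputs and the output all over the same base $A$ — and handling the fact that the boolean input must be duplicated or routed correctly through the $\multimap L$ rules so that the resulting map is genuinely $(\sigma,q) \mapsto \sigma n + q$ on the nose (as a projection $\dntn{A^{2n}} \to \dntn{A}$) rather than some other composite. I expect this to be the main obstacle, though it is a routine exercise of the same type already carried out several times in this section; alternatively one can avoid building $\pPack$ explicitly and instead construct each ${^i_\delta}\pTrans_A$ directly by a single proof tree with two $\multimap L$ rules feeding a $\& R$-assembled family of projections, mirroring the proof of Lemma \ref{encoding functions of n bools} but with two boolean-type premises.
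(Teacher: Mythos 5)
Your argument is correct, but it factors the construction differently from the paper. The paper builds each ${^i_\delta}\pTrans_A$ directly by currying in the symbol: for $j \in \{0,1\}$ it takes the proof $\Delta_{i,j}$ of $A^m, \tNBool_A \vdash A$ encoding $\delta_i(j,-)$ (Lemma \ref{encoding functions of n bools} with its final $\multimap R$ omitted), joins the two branches by $\& R$ (the $\tNBool_A$ context is shared additively, so no contraction is needed), and consumes the $\tBool_A$ with a single $\multimap L$; the resulting denotation sends $\psi \otimes \varphi$ to the map $(a_0,\ldots,a_{m-1}) \mapsto \psi\big(\varphi(a_{\delta_i(0,0)},\ldots,a_{\delta_i(0,n-1)}),\, \varphi(a_{\delta_i(1,0)},\ldots,a_{\delta_i(1,n-1)})\big)$, which on booleans is $\proj_{\delta_i(\sigma,q)}$ --- this is exactly the alternative you sketch in your last sentence. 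Your main route instead goes through a pairing proof of $\tBool_A, \tNBool_A \vdash A^{2n} \multimap A$ followed by one application of Lemma \ref{encoding functions of n bools} to the flattened function $\tilde\delta_i$. That deferred pairing proof does exist and requires no duplication of the boolean: for $j = 0,1$ build the proof of $\tNBool_A, A^{2n} \vdash A$ which applies the state input to the $j$-th block of $n$ coordinates (a $\& R$ of $\& L$-projections cut into the $A^n \multimap A$ via $\multimap L$), join the two by $\& R$, and fire the boolean with a final $\multimap L$ and $\multimap R$; on $(\proj_\sigma, \proj_q)$ this yields $\proj_{\sigma n + q}$ as you require. Comparing the two: your factorization isolates a reusable uncurrying step and reduces the proposition to a single instance of Lemma \ref{encoding functions of n bools}, which is conceptually tidy; the paper's direct construction avoids the intermediate $2n$-boolean type and the extra cut, gives a smaller proof tree, and is the pattern that generalizes verbatim to several boolean arguments (Lemma \ref{encoding functions of n bools mod}, used for the multi-tape machines in the appendix).
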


\begin{proof}
Given $i \in \{0,1,2\}$ and and $j \in \Sigma = \{0,1\}$, let $\Delta_{i,j}$ be the proof obtained from Lemma \ref{encoding functions of n bools} corresponding to the function $\delta_i(j,-)$, omitting the final $\multimap R$ rule. Define ${^i_\delta}\pTrans_A$ as the following proof, where $m = n$ if $i = 1$ and $m = 2$ otherwise:
\begin{center}
\AxiomC{$\proofvdots{\Delta_{i,0}}$}
\noLine\UnaryInfC{$A^m, \tNBool_A \vdash A$}
\AxiomC{$\proofvdots{\Delta_{i,1}}$}
\noLine\UnaryInfC{$A^m, \tNBool_A \vdash A$}
\RightLabel{\scriptsize $\& R$}
\BinaryInfC{$A^m, \tNBool_A \vdash A^2$}

\AxiomC{}
\UnaryInfC{$A \vdash A$}
\RightLabel{\scriptsize $\multimap L$}
\BinaryInfC{$A^m, \tBool_A, \tNBool_A \vdash A$}
\RightLabel{\scriptsize $\multimap L$}
\UnaryInfC{$\tBool_A, \tNBool_A \vdash \tMBool_A$}
\DisplayProof
\end{center}
Then $\dntn{{^i_\delta}\pTrans_A}$ is the function
\[
\dntn{{^i_\delta}\pTrans_A}(\psi \otimes \varphi)(a_0, ..., a_{m-1}) = \psi(\varphi(a_{\delta_i(0,0)}, ..., a_{\delta_i(0,n-1)}), \varphi(a_{\delta_i(1,0)}, ..., a_{\delta_i(1,n-1)})),
\]
and thus we have $\dntn{{^i_\delta}\pTrans_A}(\dntn{\underline{\sigma}} \otimes \dntn{\underline{q}}) = \proj_{\delta_i(\sigma, q)} = \dntn{\underline{\delta_i(\sigma, q)}}$.
\end{proof}

Once the new state, symbol and direction have been computed, our remaining task is to recombine the symbols with the binary integers representing the tape.

\begin{example} \label{example: concat and append}
Let $E = A \multimap A$. The proof $\pConcat_A$ is
\begin{center}
\AxiomC{}
\UnaryInfC{$\textcolor{blue}{{!}E} \vdash {!}E$}
\AxiomC{}
\UnaryInfC{$\textcolor{red}{{!}E} \vdash {!}E$}
\AxiomC{}
\UnaryInfC{$\textcolor{blue}{{!}E} \vdash {!}E$}
\AxiomC{}
\UnaryInfC{$\textcolor{red}{{!}E} \vdash {!}E$}
\AxiomC{$\proofvdots{\pComp_A^2}$}
\noLine\UnaryInfC{$A, E, E \vdash A$}
\RightLabel{\scriptsize $\multimap R$}
\UnaryInfC{$E, E \vdash E$}
\RightLabel{\scriptsize $\multimap L$}
\BinaryInfC{$\textcolor{red}{{!}E}, E, \tInt_A \vdash E$}
\RightLabel{\scriptsize $\multimap L$}
\BinaryInfC{$\textcolor{blue}{{!}E}, \textcolor{red}{{!}E}, E, \tBint_A \vdash E$}
\RightLabel{\scriptsize $\multimap L$}
\BinaryInfC{$\textcolor{red}{{!}E}, \textcolor{blue}{{!}E}, \textcolor{red}{{!}E}, \tInt_A, \tBint_A \vdash E$}
\RightLabel{\scriptsize $\multimap L$}
\BinaryInfC{$\textcolor{blue}{{!}E}, \textcolor{red}{{!}E}, \textcolor{blue}{{!}E}, \textcolor{red}{{!}E}, \tBint_A, \tBint_A \vdash E$}
\RightLabel{\scriptsize $2 \times$ ctr}
\doubleLine\UnaryInfC{$\textcolor{blue}{{!}E}, \textcolor{red}{{!}E}, \tBint_A, \tBint_A \vdash E$}
\RightLabel{\scriptsize $2 \times \multimap R$}
\doubleLine\UnaryInfC{$\tBint_A, \tBint_A \vdash \tBint_{A}$}
\DisplayProof
\end{center}
where $\pComp^2_A$ encodes composition \cite[Definition 3.2]{clift_murfet}. The colours indicate which copies of ${!}E$ are contracted together. When cut against the proofs of binary integers $\underline{S}_A$ and $\underline{T}_A$, the resulting proof will be equivalent under cut elimination to $\underline{ST}_A$. We write $\pConcat(S, -)$ for the proof of $\tBint_A \vdash \tBint_A$ obtained by cutting a binary integer $\underline{S}_A$ against $\pConcat_A$ such that the first $\tBint_A$ is consumed; meaning that $\pConcat(S,-)$ prepends by $S$. Similarly define $\pConcat(-,T)$ as the proof which appends by $T$. 
\end{example}

\begin{lemma} \label{appending bools to bints}
Let $W_{00}, W_{01}, W_{10}, W_{11}$ be fixed binary integers, possibly empty. There exists a proof $\pi(W_{00}, W_{01}, W_{10}, W_{11})$ of $\tBint_A, \tBool_A, \tBool_A \vdash \tBint_A$ which encodes \[(S, \sigma, \tau) \mapsto SW_{\sigma\tau}.\]
\end{lemma}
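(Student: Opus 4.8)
The plan is to split the construction into two independent pieces: a \emph{word-selection} proof $\kappa$ of $\tBool_A,\tBool_A\vdash\tBint_A$ encoding $(\sigma,\tau)\mapsto\underline{W_{\sigma\tau}}_A$, and then a cut of $\kappa$ into the suffix argument of the concatenation proof $\pConcat_A$ from Example~\ref{example: concat and append}, which prepends $S$. Composing denotations then gives $(S,\sigma,\tau)\mapsto SW_{\sigma\tau}$.

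For the first ingredient of $\kappa$, for each fixed word $W\in\{0,1\}^*$ one constructs, routinely by induction on $|W|$ in the style of the composition proof \cite[Definition 3.2]{clift_murfet}, a proof $\eval_W$ of ${!}(A\multimap A),{!}(A\multimap A),A\vdash A$ whose denotation sends $(\alpha,\beta,a)$ to $\underline{W}(\alpha,\beta)(a)$, the result of applying to $a$ the composite of digit-functions prescribed by $W$; the required copies of the digit-functions are pulled from the two copies of ${!}(A\multimap A)$ by contraction and dereliction, with a weakening in case $W$ omits a digit. Now build $\kappa$ by a nested if-then-else. Opening the target $\tBint_A$ by three applications of $\multimap R$ reduces the goal to a proof of ${!}(A\multimap A),{!}(A\multimap A),A,\tBool_A,\tBool_A\vdash A$, and since the conclusion is now the base type $A$ the booleans may be used to branch. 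For each fixed $b\in\{0,1\}$ apply $\& R$ to $\eval_{W_{b0}}$ and $\eval_{W_{b1}}$ — they share the context ${!}(A\multimap A),{!}(A\multimap A),A$, which is exactly what the additive rule requires, even though the two subproofs contract the exponentials different numbers of times — and then $\multimap L$ against the boolean $\tau$ to get a proof $P_b$ encoding $x_{b\tau}:=\underline{W_{b\tau}}(\alpha,\beta)(a)$. Next apply $\& R$ to $P_0$ and $P_1$ (again the same context, so additivity legitimately lets both branches contain $\tau$) and $\multimap L$ against $\sigma$; reintroducing the three $\multimap R$'s yields $\kappa$, with $\dntn{\kappa}(\proj_\sigma\otimes\proj_\tau)=\dntn{\underline{W_{\sigma\tau}}_A}$ by the standard computation of the denotations of the $\& R$ and $\multimap L$ steps.

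Finally, define $\pi(W_{00},W_{01},W_{10},W_{11})$ by cutting the conclusion of $\kappa$ against the second (suffix) $\tBint_A$ input of $\pConcat_A$, obtaining a proof of $\tBint_A,\tBool_A,\tBool_A\vdash\tBint_A$. Its denotation is obtained by composing $\dntn{\kappa}$ above with the fact, recorded in Example~\ref{example: concat and append}, that $\pConcat_A$ denotes concatenation, giving $(S,\sigma,\tau)\mapsto SW_{\sigma\tau}$ as claimed.

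I expect the only real friction to be the linear-logic bookkeeping around the additive branching: verifying that the shared contexts line up for each $\& R$, that the per-branch contractions and weakenings of the exponentials in the $\eval_{W_{ab}}$ are legitimate, and that the three $\multimap R$'s opening the target together with the final cut typecheck. One should also double-check that the composition-order convention for binary integers matches the one used implicitly by $\pConcat_A$, so that the result is genuinely $SW_{\sigma\tau}$ and not $W_{\sigma\tau}S$.
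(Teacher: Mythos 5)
Your construction is correct, but it factors the proof differently from the paper. The paper does not build a separate word-selection proof $\kappa : \tBool_A, \tBool_A \vdash \tBint_A$; instead it inlines the concatenation into the additive branches: each branch is $\pConcat_A(-,W_b)$ (the appending proof of Example \ref{example: concat and append} with the suffix already cut in and the final $\multimap R$'s omitted), so each branch is a proof of $\tBint_A, {!}E, {!}E, A \vdash A$ in which the $S$-input $\tBint_A$ is shared additively across the $\& R$, and the booleans are then consumed by $\multimap L$ before re-closing with $3\times \multimap R$. Your version keeps $S$ outside the branching, selects only the word $W_{\sigma\tau}$ by the nested $\& R/\multimap L$ trick (which is exactly the paper's mechanism, applied to the bare un-$\multimap R$'d binary integers $\eval_{W_{ab}}$), and then performs a single cut into the suffix argument of $\pConcat_A$; the context bookkeeping and the orientation you flagged both check out, since $\pConcat_A$ cut against $\underline{S}_A, \underline{T}_A$ yields $\underline{ST}_A$, so cutting into the second argument indeed gives $S W_{\sigma\tau}$ rather than $W_{\sigma\tau} S$, and the denotation computation is as you state. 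The trade-off: your factorization is more modular (a reusable $\tBool_A,\tBool_A \vdash \tBint_A$ gadget plus one cut), while the paper's inlined form, left open at base type $A$ without the final $\multimap R$'s, is precisely the shape reused in Proposition \ref{prop:recombination steps}, where a further $\& R/\multimap L$ layer for the direction boolean is stacked on top of $\pi(-,-,-,-)$; with your version one would first have to strip the closing $\multimap R$'s (or re-open them) before composing in that way. Either way the lemma as stated is established.
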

\begin{proof}
Let $E = A \multimap A$. We give a proof corresponding to the simpler function $(S, \sigma) \mapsto SW_\sigma$, where $W_0, W_1$ are fixed binary sequences:
\begin{center}
\AxiomC{$\proofvdots{\pConcat_A(-,W_0)}$}
\noLine\UnaryInfC{$\tBint_A, {!}E, {!}E, A \vdash A$}
\AxiomC{$\proofvdots{\pConcat_A(-,W_1)}$}
\noLine\UnaryInfC{$\tBint_A, {!}E, {!}E, A \vdash A$}
\RightLabel{\scriptsize $\& R$}
\BinaryInfC{$\tBint_A, {!}E, {!}E, A \vdash A\&A$}
\AxiomC{}
\UnaryInfC{$A \vdash A$}
\RightLabel{\scriptsize $\multimap L$}
\BinaryInfC{$\tBint_A, \tBool_A, {!}E, {!}E, A \vdash A$}
\doubleLine\RightLabel{\scriptsize $3\times \multimap R$}
\UnaryInfC{$\tBint_A, \tBool_A \vdash \tBint_A$}
\DisplayProof
\end{center}
The required proof $\pi(W_{00}, W_{01}, W_{10}, W_{11})$ is an easy extension of this, involving two instances of the $\&R$ and $\multimap L$ rules rather than one.
\end{proof}

\begin{proposition} \label{prop:recombination steps}
There exist proofs ${^0}\pRecomb_A$ and ${^1}\pRecomb_A$ of \[\tBint_A, 3\, \tBool_A \vdash \tBint_A\] which encode the functions
\[
(S, \tau, \sigma, d) \mapsto
\begin{cases}
S & \text{if $d = 0$ (left)} \\
S \sigma \tau & \text{if $d = 1$ (right)}
\end{cases}
\quad \text{and} \quad
(T, \tau, \sigma, d) \mapsto
\begin{cases}
T \tau \sigma & \text{if $d = 0$ (left)} \\
T & \text{if $d = 1$ (right)}
\end{cases}
\]
respectively.
\end{proposition}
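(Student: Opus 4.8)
The plan is to reduce both ${^0}\pRecomb_A$ and ${^1}\pRecomb_A$ to a mild generalisation of Lemma~\ref{appending bools to bints}, folding the direction bit $d$ into the ``append'' step rather than treating it separately. The reason a more naive approach fails — build two proofs producing the two possible output binary integers and then select between them with $d$ — is that a boolean $\tBool_A = (A \,\&\, A) \multimap A$ can only branch between two elements of the \emph{base type} $A$. When the output $\tBint_A$ is unfolded it is applied to a single $a : A$, and to branch between two candidate binary integers one would have to feed $d$ the pair of their values at this $a$, requiring two uses of $a$; since $A$ is not of the form ${!}(-)$ there is no contraction available. Consequently the branching must happen ``inside'', while an $A$-typed value is present whose eventual identity is irrelevant, which is exactly the situation exploited by the proof of Lemma~\ref{appending bools to bints}.

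First I would record the three-boolean version of Lemma~\ref{appending bools to bints}: for any family of fixed, possibly empty, binary integers $\{ W_{uvw} \}_{u,v,w \in \{0,1\}}$ there is a proof of $\tBint_A, \tBool_A, \tBool_A, \tBool_A \vdash \tBint_A$ whose denotation is $(S,u,v,w) \mapsto S\,W_{uvw}$. The construction is the one in the proof of Lemma~\ref{appending bools to bints} with \emph{three} nested layers of $\&R$ followed by $\multimap L$ instead of two: the $2^3$ leaves are the proofs $\pConcat_A(-, W_{uvw})$ of $\tBint_A, {!}E, {!}E, A \vdash A$ from Example~\ref{example: concat and append} (with $E = A \multimap A$ and the three $\multimap R$ rules peeled off), the successive $\multimap L$ rules against the three copies of $\tBool_A$ perform the selection at the level of $A$, and a final $3 \times \multimap R$ recovers the $\tBint_A$ on the right. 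The denotation calculation is word-for-word the one already given for Lemma~\ref{appending bools to bints}, using that $\dntn{\underline{u}_A}$ is a coordinate projection.

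Then ${^0}\pRecomb_A$ and ${^1}\pRecomb_A$ are obtained by specialising this proof, matching the argument order $(S,\tau,\sigma,d)$ (respectively $(T,\tau,\sigma,d)$) to $(S,u,v,w)$ and reading the required append string $W_{uvw}$ off the case definition: for ${^0}\pRecomb_A$ one takes $W_{uv0}$ empty for all $u,v$ (in the $d = \text{left}$ case $S$ is unchanged) and $W_{uv1}$ the two-digit string ``$vu$'' (the $d = \text{right}$ case appends the new symbol $\sigma = v$ followed by $\tau = u$), while for ${^1}\pRecomb_A$ one takes $W_{uv1}$ empty and $W_{uv0}$ the string ``$uv$'' (when the head moves left, $\tau = u$ then $\sigma = v$ are appended to the reversed right half $T$). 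The asserted denotations then follow immediately from the denotation of the generalised append lemma. The only genuine work is the combinatorial bookkeeping — the order in which $\sigma$ and $\tau$ appear in each branch and the reversal convention for $T$ — and this is where I expect whatever small errors to hide; no new linear-logic phenomena arise.
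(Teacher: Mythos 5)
Your construction is correct and is essentially the paper's own proof: the paper keeps Lemma \ref{appending bools to bints} in its two-boolean form and adds one further $\& R$/$\multimap L$ layer selecting on $d$ between $\pi(\emptyset,\emptyset,\emptyset,\emptyset)$ and $\pi(00,10,01,11)$ (resp.\ $\pi(00,01,10,11)$ and $\pi(\emptyset,\emptyset,\emptyset,\emptyset)$ for ${^1}\pRecomb_A$), which is exactly your three-boolean generalisation specialised so that $W_{uvw}$ depends on $w = d$ only through the choice of family, and your bookkeeping of the strings $vu$ and $uv$ matches the paper's. Only your motivating aside is inaccurate: selecting with $d$ between two candidate outputs built over a shared context (including the single $A$ and the two ${!}(A \multimap A)$'s) requires no contraction, since $\& R$ shares its context additively --- this is precisely what both your proof and the paper's exploit, so the ``naive'' approach you dismiss is in fact the construction used.
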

\begin{proof}
Define $\pi(-,-,-,-)$ as described in Lemma \ref{appending bools to bints}, omitting the final $\multimap R$ rules. The desired proof ${^0}\pRecomb_A$ is:
\begin{center}
\AxiomC{$\proofvdots{\pi(\emptyset, \emptyset, \emptyset, \emptyset)}$}
\noLine\UnaryInfC{$\tBint_A, 2\, \tBool_A, {!}E, {!}E, A  \vdash A$}
\AxiomC{$\proofvdots{\pi(00, 10, 01, 11)}$}
\noLine\UnaryInfC{$\tBint_A, 2\, \tBool_A, {!}E, {!}E, A  \vdash A$}
\RightLabel{\scriptsize $\& R$}
\BinaryInfC{$\tBint_A, 2\, \tBool_A, {!}E, {!}E, A  \vdash A \& A$}
\AxiomC{}
\UnaryInfC{$A \vdash A$}
\RightLabel{\scriptsize $\multimap L$}
\BinaryInfC{$\tBint_A, 3\, \tBool_A, {!}E, {!}E, A \vdash A$}
\doubleLine\RightLabel{\scriptsize $3\times \multimap R$}
\UnaryInfC{$\tBint_A, 3\, \tBool_A \vdash \tBint_A$}
\DisplayProof
\end{center}
and ${^1}\pRecomb_A$ is the same, with the leftmost branch replaced by $\pi(00, 01, 10, 11)$ and the second branch replaced by $\pi(\emptyset, \emptyset, \emptyset, \emptyset)$.
\end{proof}

\begin{proposition} \label{prop: left right and state}
There exist proofs
\begin{align*}
{_\delta}\pLeft_A:&  \; 3\,\tBint_{A^3}, \phantom{1}\,\tBint_{A^3}, 2\,\tNBool_{A^3} \vdash \tBint_A \\
{_\delta}\pRight_A:& \; 2\,\tBint_{A^3}, 2\,\tBint_{A^3}, 2\,\tNBool_{A^3} \vdash \tBint_A \\
{_\delta}\pState_A:& \; \phantom{3}\,\tBint_{A^3},\phantom{1\,\tBint_{A^3}, 1\,}  \tNBool_{A^3} \vdash \tNBool_A 
\end{align*}
which, if fed the indicated number of copies of $S, T$ and $q$ corresponding to a Turing configuration, update the left part of the tape, the right part of the tape, and the state.
\end{proposition}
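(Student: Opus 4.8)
The plan is to assemble each of the three proofs $\mathpalette{_\delta}\pLeft_A$, $\mathpalette{_\delta}\pRight_A$, $\mathpalette{_\delta}\pState_A$ by composing the building blocks established earlier in this section, and then to verify that the denotation is the claimed update. The key observation is that all three tasks begin with one or two copies of a binary integer on $A^3$ encoding the tape, plus a copy of the state on $A^3$, from which we must extract the current symbol $\sigma$ (via $\pHead_A$ of Proposition \ref{decomposing bints head}), the neighbouring symbol $\tau$, and in the case of $\pState_A$ nothing more. So the first step is to feed $\tBint_{A^3}$ through $\pHead_A$ and $\pTail_A$, and feed $\tNBool_{A^3}$ through $\mathpalette{_n}\pBooltype_A$, so that afterwards every piece of data lives over the base type $A$.

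Next I would run the transition function. For $\pState_A$ we use $\mathpalette{^1_\delta}\pTrans_A$ from Proposition \ref{encoding transition functions} applied to the extracted $\sigma$ (a $\tBool_A$) and the converted state $q$ (an $\tNBool_A$), giving the new state $q'$ directly; this is the shortest of the three. For $\pLeft_A$ and $\pRight_A$ we need the new symbol $\sigma' = \delta_0(\sigma,q)$ and the direction $d = \delta_2(\sigma,q)$, obtained from $\mathpalette{^0_\delta}\pTrans_A$ and $\mathpalette{^2_\delta}\pTrans_A$; we also need $\tau$, extracted from the \emph{second} copy of the relevant $\tBint_{A^3}$ by another application of $\pHead_A$ — this is exactly why the statement allocates two copies of each binary integer (and why the remark after Proposition \ref{decomposing bints tail} insisted $\pHead_A$ and $\pTail_A$ share a premise). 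Finally we cut the tail $S$ (or $T$), together with $\tau$, $\sigma'$, $d$, against $\mathpalette{^0}\pRecomb_A$ (respectively $\mathpalette{^1}\pRecomb_A$) from Proposition \ref{prop:recombination steps}, producing the updated binary integer. One then counts the copies of $\tBint_{A^3}$ and $\tNBool_{A^3}$ consumed: $\pHead_A$ and $\pTail_A$ each use one copy of $S$, plus one more $\pHead_A$ for $\tau$, giving three copies of the first binary integer, one copy of the second, and two copies of the state — matching the stated sequent (and similarly for $\pRight_A$, noting that there $\pTail_A$ is not needed on the first integer since $S$ is returned from $T$-side data, accounting for the slightly different bookkeeping). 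The proofs are obtained by stacking these components with the appropriate $\multimap L$ cuts, contractions, and $\multimap R$ rules.

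For the denotation verification, the cleanest route is functional composition: by Propositions \ref{decomposing bints head}, \ref{decomposing bints tail}, Lemma \ref{fixing_bool_types}, Proposition \ref{encoding transition functions} and Proposition \ref{prop:recombination steps}, each component sends the relevant vacuum-vector denotation to the vacuum-vector denotation of the expected data, so composing them sends $\dntn{\langle S\sigma, T\tau, q\rangle}$-type data to $\dntn{S'}$, $\dntn{T'}$, $\dntn{q'}$ where $S', T', q'$ are precisely the left tape, right tape and state after one step, as read off from Figure \ref{fig: single transition step}: when $d = 1$ (right) we have $S' = S\sigma'\tau$ and $T' = T$, while when $d = 0$ (left) we have $S' = S$ and $T' = T\tau\sigma'$, which is exactly what $\mathpalette{^0}\pRecomb_A$ and $\mathpalette{^1}\pRecomb_A$ compute. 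I expect the main obstacle to be purely organisational rather than conceptual: getting the multiplicities of $\tBint_{A^3}$ and $\tNBool_{A^3}$ exactly right in each of the three sequents, threading the contractions correctly through the $\multimap L$ applications (so that the right copies of the tape integer are routed to $\pHead_A$ versus $\pTail_A$), and keeping the $A^3 \to A$ base-type conversions synchronized across the symbol and state wires; the actual denotation computation is then a routine chase through the already-established component formulas.
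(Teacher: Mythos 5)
Your overall strategy coincides with the paper's: build ${_\delta}\pLeft_A$, ${_\delta}\pRight_A$, ${_\delta}\pState_A$ by cutting together $\pHead_A$, $\pTail_A$, ${_n}\pBooltype_A$, the transition components ${^i_\delta}\pTrans_A$ and the recombination proofs ${^0}\pRecomb_A$, ${^1}\pRecomb_A$, and verify the denotation by composing the component formulas (this is exactly what Figures \ref{fig: proof of left}--\ref{fig: proof of state} do). However, the multiplicity bookkeeping---which is the only genuinely nontrivial content of this proposition---has a gap. The reason ${_\delta}\pLeft_A$ needs \emph{three} copies of the first $\tBint_{A^3}$ and \emph{two} copies of $\tNBool_{A^3}$ is that the scanned symbol $\sigma$ and the state $q$ must each be supplied \emph{twice}: the new symbol $\sigma'$ and the direction $d$ are computed by the two separate proofs ${^0_\delta}\pTrans_A$ and ${^2_\delta}\pTrans_A$, each consuming its own $\tBool_A, \tNBool_A$ pair, and since none of these hypotheses sit under ${!}$ there is no contraction available to duplicate $\sigma$ or $q$ inside the proof; the duplication must be arranged upstream by applying $\pHead_A$ to two distinct copies of $\underline{S\sigma}$ and ${_n}\pBooltype_A$ to two copies of $\underline{q}$. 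Thus the first integer is routed as one copy through $\pTail_A$ (giving $S$) and two copies through $\pHead_A$ (giving $\sigma^{\otimes 2}$), while $\tau$ is extracted by $\pHead_A$ from the \emph{single} copy of the second integer $\underline{T\tau}$ (the last digit of $T\tau$ is $\tau$; the head of $S\sigma$ is $\sigma$, so $\tau$ cannot come from another copy of the first integer).

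Your tally---``$\pHead_A$ and $\pTail_A$ each use one copy of $S$, plus one more $\pHead_A$ for $\tau$''---accounts for only two copies of the first integer and one of the second (or three of the first and none of the second, depending on where that extra $\pHead_A$ is applied), so it does not yield the stated sequent $3\,\tBint_{A^3}, \tBint_{A^3}, 2\,\tNBool_{A^3} \vdash \tBint_A$; and your earlier claim that the statement allocates ``two copies of each binary integer'' is inconsistent both with this tally and with ${_\delta}\pLeft_A$ (it happens to match ${_\delta}\pRight_A$, whose multiplicities $2,2,2$ arise as: two heads of $S\sigma$ for $\sigma^{\otimes 2}$, plus one head and one tail of $T\tau$, plus two states). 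The two copies of the state, which you list but never explain, are forced by the same mechanism. Once the routing is corrected, the rest of your argument---${^1_\delta}\pTrans_A$ for the state, ${^0}\pRecomb_A$ and ${^1}\pRecomb_A$ for the two halves of the tape, and the componentwise denotation check---is the paper's proof.
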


\begin{proof}
We simply compose (using cuts) the proofs from Propositions \ref{decomposing bints head} through \ref{prop:recombination steps}; the exact sequence of cuts is given in Figures \ref{fig: proof of left} - \ref{fig: proof of state}. The verification that the proofs perform the desired tasks is made clear through the following informal computations. Here $\< S \sigma, T\tau, q\>$ is the configuration of the Turing machine at time $t$, and $\<S', T', q'\>$ is its configuration at time $t+1$. In other words, we have $\delta(\sigma, q) = (\sigma', q', d)$, and \[(S', T') = 
\begin{cases}
(S, T \tau\sigma') & d = 0 \text{ (left)}
\\(S\sigma'\tau, T) & d = 1 \text{ (right).}
\end{cases}\]

\begin{align*}
{_\delta}\pLeft_A \text{ is}:
&\phantom{\xmapsto{\makebox[0.6cm]{}}} (S \sigma)^{\otimes 3} \otimes (T \tau) \otimes q^{\otimes 2}
\\&\xmapsto{\makebox[0.5cm]{}}   S \otimes \sigma^{\otimes 2} \otimes \tau \otimes q^{\otimes 2}
& (\pTail_A \otimes \pHead_A^{\otimes 3} \otimes {_n}\pBooltype_A^{\otimes 2})
\\&\xmapsto{\makebox[0.5cm]{}}   S \otimes \tau \otimes (\sigma \otimes q)^{\otimes 2}
& (\text{exchange})
\\&\xmapsto{\makebox[0.5cm]{}}   S \otimes \tau \otimes \sigma' \otimes d
& (\id^{\otimes 2} \otimes {^0_\delta}\pTrans_A \otimes {^2_\delta}\pTrans_A)
\\&\xmapsto{\makebox[0.5cm]{}}   S'
& ({^0}\pRecomb_A)\end{align*}
\begin{align*}
{_\delta}\pRight_A \text{ is}: 
&\phantom{\xmapsto{\makebox[0.6cm]{}}} (S \sigma)^{\otimes 2} \otimes (T \tau)^{\otimes 2} \otimes q^{\otimes 2}
\\&\xmapsto{\makebox[0.5cm]{}}   \sigma^{\otimes 2} \otimes \tau \otimes T \otimes q^{\otimes 2}
& (\pHead_A^{\otimes 3}  \otimes \pTail_A \otimes {_n}\pBooltype_A^{\otimes 2})
\\&\xmapsto{\makebox[0.5cm]{}}   T \otimes \tau \otimes (\sigma \otimes q)^{\otimes 2}
& (\text{exchange})
\\&\xmapsto{\makebox[0.5cm]{}}   T \otimes \tau \otimes \sigma' \otimes d
& (\id^{\otimes 2} \otimes {^0_\delta}\pTrans_A \otimes {^2_\delta}\pTrans_A)
\\&\xmapsto{\makebox[0.5cm]{}}   T'
& ({^1}\pRecomb_A) \\ \\
{_\delta}\pState_A \text{ is}:
&\phantom{\xmapsto{\makebox[0.6cm]{}}} (S \sigma) \otimes q
\\&\xmapsto{\makebox[0.5cm]{}}   \sigma \otimes q
& (\pHead_A \otimes {_n}\pBooltype_A)
\\&\xmapsto{\makebox[0.5cm]{}}   q'.
& ({^1_\delta}\pTrans_A)
\end{align*}
\end{proof}

\begin{theorem} \label{thm: single transition step}
There exists a proof ${_\delta}\pStep_A$ of $\tTur_{A^3} \vdash \tTur_A$ which encodes a single transition step of a given Turing machine. 
\end{theorem}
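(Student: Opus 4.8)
The plan is to assemble ${_\delta}\pStep_A$ from the three proofs ${_\delta}\pLeft_A$, ${_\delta}\pRight_A$, ${_\delta}\pState_A$ of Proposition~\ref{prop: left right and state}, feeding each the number of copies of the components $S\sigma$, $T\tau$, $q$ of the incoming configuration that it demands, and then collapsing everything back down with contractions and tensor rules so that the conclusion is $\tTur_{A^3} \vdash \tTur_A$. Counting from the displayed sequents in Proposition~\ref{prop: left right and state}, the three proofs together consume $3+2+1 = 6$ copies of the left $\tBint_{A^3}$, $1+2+0 = 3$ copies of the right $\tBint_{A^3}$, and $2+2+1 = 5$ copies of the $\tNBool_{A^3}$; in particular every slot is used at least once, so only contractions (no weakenings) are needed.

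Concretely, I would first apply dereliction to every antecedent of ${_\delta}\pLeft_A$ and then the promotion rule, obtaining a proof $\widehat{\pLeft}_A$ of $3\,{!}\tBint_{A^3},\,{!}\tBint_{A^3},\,2\,{!}\tNBool_{A^3} \vdash {!}\tBint_A$; similarly produce $\widehat{\pRight}_A$ of $2\,{!}\tBint_{A^3},\,2\,{!}\tBint_{A^3},\,2\,{!}\tNBool_{A^3} \vdash {!}\tBint_A$ and $\widehat{\pState}_A$ of ${!}\tBint_{A^3},\,{!}\tNBool_{A^3} \vdash {!}\tNBool_A$. Combining these three with two applications of $\otimes R$ (which concatenates the antecedent lists) gives a proof with conclusion ${!}\tBint_A \otimes {!}\tBint_A \otimes {!}\tNBool_A = \tTur_A$; after inserting exchanges to group like antecedents together, the antecedent multiset is $6\,{!}\tBint_{A^3}$, $3\,{!}\tBint_{A^3}$, $5\,{!}\tNBool_{A^3}$. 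Now contract the six left $\tBint$-copies down to one ${!}\tBint_{A^3}$, the three right $\tBint$-copies down to one, and the five $\tNBool$-copies down to one, and finish with two applications of $\otimes L$ to replace the resulting three antecedents by the single antecedent $\tTur_{A^3}$. Because of its size this is probably best displayed as a figure; the result is ${_\delta}\pStep_A$.

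To see that ${_\delta}\pStep_A$ encodes one transition step I would compute its denotation. Write $\omega_1 = \dntn{\underline{S\sigma}_{A^3}}$, $\omega_2 = \dntn{\underline{T\tau}_{A^3}}$, $\omega_3 = \dntn{\underline{q}_{A^3}}$, so the configuration $\<S\sigma, T\tau, q\>$ is the group-like element $\vacu_{\omega_1} \otimes \vacu_{\omega_2} \otimes \vacu_{\omega_3}$ of $\dntn{\tTur_{A^3}}$. Since $\Delta\vacu_P = \vacu_P \otimes \vacu_P$, the contractions reproduce this input as $\vacu_{\omega_1}^{\otimes 6} \otimes \vacu_{\omega_2}^{\otimes 3} \otimes \vacu_{\omega_3}^{\otimes 5}$; these copies are distributed to the three branches; dereliction sends each $\vacu_\omega$ to $\omega$ because $d\vacu_P = P$; the informal computations in Proposition~\ref{prop: left right and state} then evaluate ${_\delta}\pLeft_A$, ${_\delta}\pRight_A$, ${_\delta}\pState_A$ on these tuples to $\dntn{\underline{S'}_A}$, $\dntn{\underline{T'}_A}$, $\dntn{\underline{q'}_A}$ (where $\delta(\sigma,q) = (\sigma',q',d)$ and $(S',T')$ is as there); and the three promotions repackage these as the group-like element $\vacu_{\dntn{\underline{S'}_A}} \otimes \vacu_{\dntn{\underline{T'}_A}} \otimes \vacu_{\dntn{\underline{q'}_A}} = \dntn{\<S', T', q'\>}$. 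Alternatively, one can argue purely syntactically, propagating the cut against the proof encoding a configuration through $\otimes L$, the contractions, $\otimes R$, promotion and dereliction until it decomposes into cuts of ${_\delta}\pLeft_A$, ${_\delta}\pRight_A$, ${_\delta}\pState_A$ against copies of the binary-integer and $n$-boolean proofs, which are dealt with by Propositions~\ref{decomposing bints head}--\ref{prop: left right and state}.

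I do not expect a genuine conceptual obstacle here: the construction is a large but routine assembly, and the verification is immediate once the three component proofs and the coalgebra facts $\Delta\vacu_P = \vacu_P\otimes\vacu_P$, $d\vacu_P = P$ are in hand. The one point that needs care is the bookkeeping — matching the $6$, $3$, $5$ copies demanded by $\widehat{\pLeft}_A,\widehat{\pRight}_A,\widehat{\pState}_A$ with exactly the right contractions, and inserting the exchanges that group like antecedents before contracting. It is also worth recording, with a view to later sections, that ${_\delta}\pStep_A$ is component-wise plain: after the bottom $\otimes L$ rules its three components are (up to contraction) the derelicted-but-unpromoted composites underlying $\widehat{\pLeft}_A,\widehat{\pRight}_A,\widehat{\pState}_A$, each of which is plain by Lemma~\ref{lemma:plain_under_comp} and Remark~\ref{remark:stable_plain}.
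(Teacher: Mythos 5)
Your construction is exactly the paper's: dereliction and promotion of ${_\delta}\pLeft_A$, ${_\delta}\pRight_A$, ${_\delta}\pState_A$, two $\otimes R$ rules, contractions of the $6$, $3$, $5$ copies, and a final $\otimes L$, which is precisely the proof displayed in Figure~\ref{fig: proof of single step transition}, and your denotational verification on group-like elements matches how the paper justifies the encoding via Proposition~\ref{prop: left right and state}. Correct, and essentially the same approach as the paper (your closing remark on component-wise plainness is treated separately there, in Lemma~\ref{lemma:stepsareplain} and Proposition~\ref{prop:pstepplain}).
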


\begin{proof}
The desired proof ${_\delta}\pStep_A$ is given in Figure \ref{fig: proof of single step transition}.
\end{proof}

By cutting the above construction against itself, we obtain:
\begin{corollary}\label{cor:step_p}
For each $p \geq 1$, there exists a proof ${^p_\delta}\pStep_A$ of $\tTur_{A^{3^p}} \vdash \tTur_A$ which encodes $p$ transition steps of a given Turing machine.
\end{corollary}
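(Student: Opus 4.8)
The plan is to argue by induction on $p$, obtaining ${^p_\delta}\pStep_A$ as an iterated composite of single-step proofs, each one instantiated at a suitable base type. The crucial observation is that the construction of ${_\delta}\pStep_A$ in Theorem~\ref{thm: single transition step}, together with all the auxiliary proofs of Section~\ref{subsection: the encoding}, is uniform in the base type: substituting any formula $C$ for $A$ yields a proof ${_\delta}\pStep_C$ of $\tTur_{C^3} \vdash \tTur_C$ that again encodes a single transition step, and substitution commutes with cut-elimination. One also uses that the datatype encodings $\underline{S}_{(-)}$, $\underline{i}_{(-)}$, and hence the configuration encoding $\dntn{\<S,T,q\>}$, are natural in the base type, so that an isomorphism of formulas $C \cong C'$ induces an isomorphism $\tTur_C \cong \tTur_{C'}$ carrying $\vacu_{\dntn{\underline{S}_C}} \otimes \vacu_{\dntn{\underline{T}_C}} \otimes \vacu_{\dntn{\underline{q}_C}}$ to its counterpart at base $C'$.

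Concretely, I would set $C_j = A^{3^j}$, so that $C_0 = A$ and $C_p = A^{3^p}$, and use the isomorphism $C_{j+1} \cong (C_j)^3$ coming from associativity of $\&$ (and functoriality of the type constructors $\tBint_{(-)}$, $\tNBool_{(-)}$) to produce a proof $\iota_j$ realizing $\tTur_{C_{j+1}} \vdash \tTur_{(C_j)^3}$. Cutting $\iota_j$ against the instantiation ${_\delta}\pStep_{C_j} : \tTur_{(C_j)^3} \vdash \tTur_{C_j}$ gives a proof $\sigma_j : \tTur_{C_{j+1}} \vdash \tTur_{C_j}$. I would then define ${^1_\delta}\pStep_A = {_\delta}\pStep_A$ (here $\iota_0$ is trivial) and, inductively, ${^{p+1}_\delta}\pStep_A$ to be the cut of $\sigma_p$ against ${^p_\delta}\pStep_A$, yielding a proof of $\tTur_{A^{3^{p+1}}} \vdash \tTur_A$; Proposition~\ref{prop:cut_componentwiseplain} moreover shows this proof is component-wise plain.

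To verify that ${^p_\delta}\pStep_A$ encodes $p$ steps, I would show by induction on $p$ that cutting it against a Turing configuration of $M$ at time $t$ — encoded at base type $A^{3^p}$ — yields, up to cut-elimination, the configuration at time $t+p$. The case $p = 1$ is Theorem~\ref{thm: single transition step}. For the inductive step, associativity of cut lets one first cut the configuration at base $C_{p+1}$ against $\sigma_p$: the isomorphism $\iota_p$ transports it to the same configuration at base $(C_p)^3$ (this is where naturality of the configuration encoding enters), and ${_\delta}\pStep_{C_p}$ then turns it into the configuration at time $t+1$, encoded at base $C_p = A^{3^p}$. Cutting the result against ${^p_\delta}\pStep_A$ and applying the induction hypothesis produces the configuration at time $t+p+1$.

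The main obstacle is essentially bookkeeping: one has to pin down precisely the sense in which the Girard encoding and the configuration encoding are natural in the base type, so that instantiating ${_\delta}\pStep$ at a composite type $A^{3^j}$ and threading in the $\&$-associativity isomorphisms really does commute with cut-elimination and leaves the computed configuration unchanged. Once that uniformity is granted, the corollary follows formally from Theorem~\ref{thm: single transition step} and the closure of (component-wise) plain proofs under composition.
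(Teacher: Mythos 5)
Your proposal is correct and follows essentially the same route as the paper, whose entire proof is the remark that one obtains ${^p_\delta}\pStep_A$ ``by cutting the above construction against itself'': you simply make explicit the instantiation of ${_\delta}\pStep$ at the base types $A^{3^j}$ and the $\&$-associativity identifications that the paper leaves implicit. The extra bookkeeping with the isomorphisms $\iota_j$ and the inductive verification is a fine elaboration, not a different argument.
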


Note that this iteration must be performed `by hand' for each $p$; we cannot iterate for a variable number of steps. By this we mean that it is not possible to devise a proof of $\tInt_{B}, \tTur_{C} \vdash \tTur_A$ (for suitable types $B,C$) which simulates a given Turing machine for $n$ steps when cut against the Church numeral $\underline{n}_B$. The fundamental problem is that iteration using $\tInt_{B}$ only allows iteration of \emph{endomorphisms} $B \multimap B$, and so the fact that our base type changes in each iteration of ${_\delta}\pStep_A$ makes this impossible.

\begin{remark}\label{remark:second-order-final}
If one is willing to use second-order, then iteration in the above sense becomes possible via the following proof, where $\tTur = \forall A. \tTur_A$:
\begin{center}
\AxiomC{$\proofvdots{{_\delta}\pStep_A}$}
\noLine\UnaryInfC{$\tTur_{A^3} \vdash \tTur_A$}
\RightLabel{\scriptsize $\forall L$}
\UnaryInfC{$\tTur \vdash \tTur_A$}
\RightLabel{\scriptsize $\forall R$}
\UnaryInfC{$\tTur \vdash \tTur$}
\RightLabel{\scriptsize $\multimap R$}
\UnaryInfC{$\vdash \tTur \multimap \tTur$}
\RightLabel{\scriptsize prom}
\UnaryInfC{$\vdash {!}(\tTur \multimap \tTur)$}

\AxiomC{}
\UnaryInfC{$\tTur \vdash \tTur$}
\AxiomC{}
\UnaryInfC{$\tTur \vdash \tTur$}
\RightLabel{\scriptsize $\multimap L$}
\BinaryInfC{$\tTur \multimap \tTur, \tTur \vdash \tTur$}
\RightLabel{\scriptsize prom}
\BinaryInfC{$\tInt_{\tTur}, \tTur \vdash \tTur$}
\RightLabel{\scriptsize $\forall L$}
\UnaryInfC{$\tInt, \tTur \vdash \tTur$}
\DisplayProof
\end{center}
In the original encoding given by Girard \cite{girard_complexity} the use of second-order quantifiers is present throughout the encoding. In contrast, the above encoding only involves the $\forall R, \forall L$ rules at the very bottom of the proof tree, which makes it more suitable for the study of Turing machines in the context of differential linear logic in \cite{clift_murfet3}.
\end{remark}

\begin{figure}
\centering
\begin{tikzpicture}[every node/.style={block},
        block/.style={minimum height=1.5em,outer sep=0pt,draw,rectangle,node distance=0pt}]
    \node at (1,0)(A) 
{
\AxiomC{$\proofvdots{\pTail_A \otimes \pHead_A^{\otimes 3} \otimes {_n}\pBooltype_A^{\otimes 2}}$}
\noLine\UnaryInfC{$3\, \tBint_{A^3}, \tBint_{A^3}, 2\,\tNBool_{A^3} \vdash \tBint_A \otimes \tBool_A^{\otimes 3} \otimes \tNBool_A^{\otimes 2}$}
\DisplayProof 
};
    \node at (3,-2.8)(B)
{
\AxiomC{$\proofvdots{\id^{\otimes 2} \otimes {^0_\delta}\pTrans_A \otimes {^2_\delta}\pTrans_A}$}
\noLine\UnaryInfC{$\tBint_A, \tBool_A, 2\,(\tBool_A, \tNBool_A) \vdash \tBint_A \otimes \tBool_A^{\otimes 3}$}
\RightLabel{\scriptsize exch}
\UnaryInfC{$\tBint_A, 3\,\tBool_A, 2\,\tNBool_A \vdash \tBint_A \otimes \tBool_A^{\otimes 3}$}
\RightLabel{\scriptsize $\otimes L$}
\doubleLine\UnaryInfC{$\tBint_A \otimes \tBool_A^{\otimes 3} \otimes \tNBool_A^{\otimes 2} \vdash \tBint_A \otimes \tBool_A^{\otimes 3}$}
\DisplayProof
};
    \node at (4,-5.8)(C)
{
\AxiomC{$\proofvdots{{^0}\pRecomb_A}$}
\noLine\UnaryInfC{$\tBint_A, 3\, \tBool_A \vdash \tBint_A$}
\RightLabel{\scriptsize $\otimes L$}
\doubleLine\UnaryInfC{$\tBint_A \otimes \tBool_A^{\otimes 3} \vdash \tBint_A$}
\DisplayProof
};
    \node at (0, -7.8)(D) [draw=none]
{
\AxiomC{}
\RightLabel{\scriptsize cut}
\UnaryInfC{$3\, \tBint_{A^3}, \tBint_{A^3}, 2\, \tNBool_{A^3} \vdash \tBint_A \otimes \tBool_A^{\otimes 3}$}
\AxiomC{\qquad}
\RightLabel{\scriptsize cut}
\BinaryInfC{$3\, \tBint_{A^3}, \tBint_{A^3}, 2\, \tNBool_{A^3} \vdash \tBint_A$}
\DisplayProof 
};
    \draw[-latex] (-3,-0.98) -- (-3,-7.28); 
    \draw[-latex] (-0.2,-4.4) -- (-0.2,-7.28); 
    \draw[-latex] (4.2,-7.035) -- (4.2,-7.92); 
\end{tikzpicture}
\caption{The proof ${_\delta}\pLeft_A$.} \label{fig: proof of left}
\end{figure}

\begin{figure}
\centering
\begin{tikzpicture}[every node/.style={block},
        block/.style={minimum height=1.5em,outer sep=0pt,draw,rectangle,node distance=0pt}]
    \node at (1,0)(A) 
{
\AxiomC{$\proofvdots{\pHead_A^{\otimes 3}  \otimes \pTail_A \otimes {_n}\pBooltype_A^{\otimes 2}}$}
\noLine\UnaryInfC{$2\, \tBint_{A^3}, 2\,\tBint_{A^3}, 2\,\tNBool_{A^3} \vdash \tBool_A^{\otimes 3} \otimes \tBint_A \otimes \tNBool_A^{\otimes 2}$}
\DisplayProof 
};
    \node at (3,-2.8)(B)
{
\AxiomC{$\proofvdots{\id^{\otimes 2} \otimes {^0_\delta}\pTrans_A \otimes {^2_\delta}\pTrans_A}$}
\noLine\UnaryInfC{$\tBint_A, \tBool_A, 2\,(\tBool_A, \tNBool_A) \vdash \tBint_A \otimes \tBool_A^{\otimes 3}$}
\RightLabel{\scriptsize exch}
\UnaryInfC{$3\, \tBool_A, \tBint_A, 2\, \tNBool_A \vdash \tBint_A \otimes \tBool_A^{\otimes 3}$}
\RightLabel{\scriptsize $\otimes L$}
\doubleLine\UnaryInfC{$\tBool_A^{\otimes 3} \otimes \tBint_A \otimes \tNBool_A^{\otimes 2} \vdash \tBint_A \otimes \tBool_A^{\otimes 3}$}
\DisplayProof
};
    \node at (4,-5.8)(C)
{
\AxiomC{$\proofvdots{{^1}\pRecomb_A}$}
\noLine\UnaryInfC{$\tBint_A, 3\, \tBool_A \vdash \tBint_A$}
\RightLabel{\scriptsize $\otimes L$}
\doubleLine\UnaryInfC{$\tBint_A \otimes \tBool_A^{\otimes 3} \vdash \tBint_A$}
\DisplayProof
};
    \node at (0, -7.8)(D) [draw=none]
{
\AxiomC{}
\RightLabel{\scriptsize cut}
\UnaryInfC{$2\, \tBint_{A^3}, 2\,\tBint_{A^3}, 2\, \tNBool_{A^3} \vdash \tBint_A \otimes \tBool_A^{\otimes 3}$}
\AxiomC{\qquad}
\RightLabel{\scriptsize cut}
\BinaryInfC{$2\, \tBint_{A^3}, 2\,\tBint_{A^3}, 2\, \tNBool_{A^3} \vdash \tBint_A$}
\DisplayProof 
};
    \draw[-latex] (-3,-0.98) -- (-3,-7.28); 
    \draw[-latex] (-0.2,-4.4) -- (-0.2,-7.28); 
    \draw[-latex] (4.2,-7.035) -- (4.2,-7.92); 
\end{tikzpicture}

\caption{The proof ${_\delta}\pRight_A$.} \label{fig: proof of right}
\end{figure}

\begin{figure}
\centering
\AxiomC{$\proofvdots{\pHead_A \otimes {_n}\pBooltype_A}$}
\noLine\UnaryInfC{$\tBint_{A^3}, \tNBool_{A^3} \vdash \tBool_A \otimes \tNBool_A$}
\AxiomC{$\proofvdots{{^1_\delta}\pTrans_A}$}
\noLine\UnaryInfC{$\tBool_A, \tNBool_A \vdash \tNBool_A$}
\RightLabel{\scriptsize $\otimes L$}
\UnaryInfC{$\tBool_A \otimes \tNBool_A \vdash \tNBool_A$}
\RightLabel{\scriptsize cut}
\BinaryInfC{$\tBint_{A^3}, \tNBool_{A^3} \vdash \tNBool_A$}
\DisplayProof 

\caption{The proof ${_\delta}\pState_A$.} \label{fig: proof of state}
\end{figure}

\begin{figure}
\centering
\begin{tikzpicture}[every node/.style={block},
        block/.style={minimum height=1.5em,outer sep=0pt,draw,rectangle,node distance=0pt}]
    \node at (1,0)(A) 
{
\AxiomC{$\proofvdots{{_\delta}\pLeft_A}$}
\noLine\UnaryInfC{$3\, \tBint_{A^3}, \tBint_{A^3}, 2\,\tNBool_{A^3} \vdash \tBint_A$}
\RightLabel{\scriptsize der, prom}
\doubleLine\UnaryInfC{$3\, {!}\tBint_{A^3}, {!}\tBint_{A^3}, 2\, {!}\tNBool_{A^3} \vdash {!}\tBint_A$}
\DisplayProof 
};
    \node at (3,-2.6)(B)
{
\AxiomC{$\proofvdots{{_\delta}\pRight_A}$}
\noLine\UnaryInfC{$2\, \tBint_{A^3}, 2\,\tBint_{A^3}, 2\,\tNBool_{A^3} \vdash \tBint_A$}
\RightLabel{\scriptsize der, prom}
\doubleLine\UnaryInfC{$2\, {!}\tBint_{A^3}, 2\,{!}\tBint_{A^3}, 2\, {!}\tNBool_{A^3} \vdash {!}\tBint_A$}
\DisplayProof
};
    \node at (4,-5.2)(C)
{
\AxiomC{$\proofvdots{{_\delta}\pState_A}$}
\noLine\UnaryInfC{$\tBint_{A^3}, \tNBool_{A^3} \vdash \tNBool_A$}
\RightLabel{\scriptsize der, prom}
\doubleLine\UnaryInfC{$ {!}\tBint_{A^3}, {!}\tNBool_{A^3} \vdash {!}\tNBool_A$}
\DisplayProof
};
    \node at (0, -8)(D) [draw=none]
{
\AxiomC{}
\RightLabel{\scriptsize $\otimes R$}
\UnaryInfC{$5\, {!}\tBint_{A^3}, 3\,{!}\tBint_{A^3}, 4\, {!}\tNBool_{A^3} \vdash {!}\tBint_A\otimes {!}\tBint_A$}
\RightLabel{\scriptsize $\otimes R$}
\AxiomC{\qquad}
\BinaryInfC{$6\, {!}\tBint_{A^3}, 3\,{!}\tBint_{A^3}, 5\, {!}\tNBool_{A^3} \vdash \tTur_A$}
\RightLabel{\scriptsize ctr}
\doubleLine\UnaryInfC{${!}\tBint_{A^3}, {!}\tBint_{A^3}, {!}\tNBool_{A^3} \vdash \tTur_A$}
\RightLabel{\scriptsize $\otimes L$}
\UnaryInfC{$\tTur_{A^3} \vdash \tTur_A$}
\DisplayProof 
};
    \draw[-latex] (-2.5,-1.15) -- (-2.5,-6.98); 
    \draw[-latex] (-0.5,-3.79) -- (-0.5,-6.98); 
    \draw[-latex] (4.5,-6.34) -- (4.5,-7.51); 
\end{tikzpicture}
\caption{The proof ${_\delta}\pStep_A$.} \label{fig: proof of single step transition}
\end{figure}

\newpage

Next we observe that ${_\delta} \pStep_A$ is component-wise plain in the sense of Definition \ref{defn:plain_comp}.

\begin{definition} We write ${_\delta}\pLeft_A^\dagger$ for the proof
\begin{center}
\AxiomC{$\proofvdots{{_\delta}\pLeft_A}$}
\noLine\UnaryInfC{$3\,\tBint_{A^3}, \tBint_{A^3}, 2\,\tNBool_{A^3} \vdash \tBint_A$}
\RightLabel{\scriptsize der}
\doubleLine\UnaryInfC{$3\,{!}\tBint_{A^3}, {!}\tBint_{A^3}, 2\,{!}\tNBool_{A^3} \vdash \tBint_A$}
\RightLabel{\scriptsize ctr}
\doubleLine\UnaryInfC{${!}\tBint_{A^3}, {!}\tBint_{A^3}, {!}\tNBool_{A^3} \vdash \tBint_A$}
\DisplayProof
\end{center}
and similarly for ${_\delta}\pRight_A^\dagger$ and ${_\delta}\pState_A^\dagger$.
\end{definition}

It is clear that:

\begin{lemma}\label{lemma:stepsareplain} The proofs ${_\delta}\pLeft_A^\dagger, {_\delta}\pRight_A^\dagger, {_\delta}\pState_A^\dagger$ are plain.
\end{lemma}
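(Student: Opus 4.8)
The plan is to observe that each of the three daggered proofs is, by construction, already in the exact shape demanded by Definition \ref{defn:plain}, so that the ``equivalent under cut-elimination'' clause is satisfied trivially (by reflexivity) and there is essentially nothing further to prove. Recall from Proposition \ref{prop: left right and state} that ${_\delta}\pLeft_A$, ${_\delta}\pRight_A$ and ${_\delta}\pState_A$ are genuine proofs of
\[
3\,\tBint_{A^3}, \tBint_{A^3}, 2\,\tNBool_{A^3} \vdash \tBint_A\,, \qquad 2\,\tBint_{A^3}, 2\,\tBint_{A^3}, 2\,\tNBool_{A^3} \vdash \tBint_A\,, \qquad \tBint_{A^3}, \tNBool_{A^3} \vdash \tNBool_A
\]
respectively, and that ${_\delta}\pLeft_A^\dagger$, ${_\delta}\pRight_A^\dagger$, ${_\delta}\pState_A^\dagger$ are obtained from these by applying a dereliction to every hypothesis and then contracting the duplicated hypotheses down to a single copy each. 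I would simply match this data against Definition \ref{defn:plain}: ${_\delta}\pLeft_A^\dagger$ is the plain proof built from $\pi = {_\delta}\pLeft_A$ with $r = 3$, $A_1 = A_2 = \tBint_{A^3}$, $A_3 = \tNBool_{A^3}$, $B = \tBint_A$ and degree vector $(3,1,2)$; ${_\delta}\pRight_A^\dagger$ is built from $\pi = {_\delta}\pRight_A$ with degree vector $(2,2,2)$; and ${_\delta}\pState_A^\dagger$ is built from $\pi = {_\delta}\pState_A$ with $r = 2$ and degree vector $(1,1)$. (Note there is no requirement in Definition \ref{defn:plain} that the $A_i$ be pairwise distinct, as witnessed by Remark \ref{remark:stable_plain}.)

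The one point worth spelling out is that no hypothesis of any of the three daggered proofs requires a weakening, because every entry of every degree vector above is at least $1$; hence the final ``ctr/wk'' layer of Definition \ref{defn:plain} is realised purely by contractions, and in the case of ${_\delta}\pState_A^\dagger$ by no contractions at all since both degrees equal $1$. I do not expect any obstacle here: the content of the lemma is precisely that the bookkeeping in Proposition \ref{prop: left right and state} and in the definition of the $\dagger$-proofs has been arranged so that plainness is immediate, the substantive work having already been carried out in Proposition \ref{prop: left right and state}, which produced the underlying proofs ${_\delta}\pLeft_A$, ${_\delta}\pRight_A$, ${_\delta}\pState_A$.
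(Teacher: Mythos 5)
Your proposal is correct and matches the paper, which offers no argument at all beyond declaring the lemma clear: the daggered proofs are by construction literally of the shape required by Definition \ref{defn:plain} (with $\pi$ the corresponding un-daggered proof and degree vectors $(3,1,2)$, $(2,2,2)$, $(1,1)$), so plainness holds by reflexivity of cut-elimination equivalence.
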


\begin{proposition}\label{prop:pstepplain} The proofs ${_\delta} \pStep_A$ and ${^p_\delta} \pStep_A$ are component-wise plain.
\end{proposition}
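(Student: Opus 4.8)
The plan is to realise ${_\delta}\pStep_A$ explicitly in the normal form of Definition \ref{defn:plain_comp}, taking as components the plain proofs ${_\delta}\pLeft_A^\dagger$, ${_\delta}\pRight_A^\dagger$ and a padded variant of ${_\delta}\pState_A^\dagger$ supplied by Lemma \ref{lemma:stepsareplain}; the statement for ${^p_\delta}\pStep_A$ then follows formally from closure of the component-wise plain proofs under composition (Proposition \ref{prop:cut_componentwiseplain}).

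First I would fix the candidate components. Set $\psi_1 = {_\delta}\pLeft_A^\dagger$ and $\psi_2 = {_\delta}\pRight_A^\dagger$; by Lemma \ref{lemma:stepsareplain} these are plain proofs of ${!}\tBint_{A^3}, {!}\tBint_{A^3}, {!}\tNBool_{A^3} \vdash \tBint_A$. Since ${_\delta}\pState_A^\dagger$ is only a plain proof of ${!}\tBint_{A^3}, {!}\tNBool_{A^3} \vdash \tNBool_A$, I would weaken in the missing copy of ${!}\tBint_{A^3}$ (the one bookkeeping the right half of the tape) and use Remark \ref{remark:stable_plain} to obtain a plain proof $\psi_3: {!}\tBint_{A^3}, {!}\tBint_{A^3}, {!}\tNBool_{A^3} \vdash \tNBool_A$. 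The three $\psi_i$ now share the common ${!}$-context $\Gamma := {!}\tBint_{A^3}, {!}\tBint_{A^3}, {!}\tNBool_{A^3}$, and, since $\tTur_{A^3} = {!}\tBint_{A^3} \otimes {!}\tBint_{A^3} \otimes {!}\tNBool_{A^3}$, the sequent $\Gamma \vdash \tTur_A$ sitting directly above the final $\otimes L$ in Figure \ref{fig: proof of single step transition} is exactly of the kind to which Definition \ref{defn:plain_comp} applies; we read component-wise plainness of ${_\delta}\pStep_A$ through this evidently invertible $\otimes L$ (as is already done implicitly when the step proof is composed with itself in Proposition \ref{prop:cut_componentwiseplain}).

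The core of the argument is a cut-elimination chase inside Figure \ref{fig: proof of single step transition}. As drawn, the proof of $\Gamma \vdash \tTur_A$ is obtained by applying dereliction and then promotion to each of ${_\delta}\pLeft_A, {_\delta}\pRight_A, {_\delta}\pState_A$, combining the three results by two $\otimes R$ rules, and then applying one block of contractions collapsing every duplicated ${!}$-hypothesis to a single copy of each. I would first use the commutation of contraction with $\otimes R$ to break this block into contractions internal to each branch (together with one weakening padding the ${_\delta}\pState_A$ branch by ${!}\tBint_{A^3}$) followed by the remaining ``cross-branch'' contractions fusing the three surviving copies of each hypothesis into one. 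The internal contractions and the padding weakening can then be pushed above the promotions by the (Contraction, Promotion) and (Weakening, Promotion) commuting conversions of \cite[\S 5.2]{benton_etal} — used just as in the cut-elimination rearrangements of Proposition \ref{prop:cut_componentwiseplain} — noting that a weakening immediately absorbed by a later contraction is inert and so costs nothing up to cut-elimination. After these moves the three branches have become $\prom(\psi_1), \prom(\psi_2), \prom(\psi_3)$, and the proof is exactly a tensor of promotions of the $\psi_i$ followed by the contractions of Definition \ref{defn:plain_comp}; hence ${_\delta}\pStep_A$ is component-wise plain. For the second claim, by Corollary \ref{cor:step_p} the proof ${^p_\delta}\pStep_A$ is the $p$-fold iterated cut of the step proof against itself at the base types $A, A^3, \ldots, A^{3^{p-1}}$, i.e. the composite ${_\delta}\pStep_A \l {_\delta}\pStep_{A^3} \l \cdots \l {_\delta}\pStep_{A^{3^{p-1}}}$ in the sense of Proposition \ref{prop:cut_componentwiseplain}; each factor is component-wise plain by the first claim, so an induction on $p$ using Proposition \ref{prop:cut_componentwiseplain} concludes.

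I expect the main obstacle to be the chase in the previous paragraph: faithfully tracking which of the many copies of ${!}\tBint_{A^3}$ and ${!}\tNBool_{A^3}$ produced by the three derelictions get contracted with which, and verifying honestly that the large contraction at the foot of Figure \ref{fig: proof of single step transition} does commute upward past the promotions and $\otimes R$ rules so as to reconstitute the $\prom(\psi_i)$. The mild asymmetry that ${_\delta}\pState_A^\dagger$ carries a strictly smaller ${!}$-context than ${_\delta}\pLeft_A^\dagger$ and ${_\delta}\pRight_A^\dagger$ — forcing the extra padding weakening on the state branch — is the one genuinely fiddly point, but it is handled cleanly by Remark \ref{remark:stable_plain} together with the inertness of weakening-then-contraction.
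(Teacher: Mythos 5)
Your proposal is correct and follows essentially the same route as the paper: the paper takes the first claim to be immediate from Lemma \ref{lemma:stepsareplain} and the construction in Figure \ref{fig: proof of single step transition} (your explicit chase, including the padding weakening on the state component via Remark \ref{remark:stable_plain}, just spells out what the paper leaves implicit), and derives the second claim from Proposition \ref{prop:cut_componentwiseplain} exactly as you do. No substantive difference in approach.
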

\begin{proof}
The second claim follows from Proposition \ref{prop:cut_componentwiseplain}.
\end{proof}

The denotation of the proof ${_\delta} \pStep_A$ is a linear map
\[
\dntn{{_\delta}\pStep_A}: {!} \dntn{\tBint_{A^3}}^{\otimes 2} \otimes {!} \dntn{\tBool_{A^3}} \lto {!} \dntn{\tBint_{A}}^{\otimes 2} \otimes {!} \dntn{\tBool_{A}}
\]
and we compute the value of this map on vectors of the form
\[\Psi = \vacu_{\alpha} \otimes \vacu_{\beta} \otimes \vacu_{\gamma},\] 
where
\[
\alpha = \Sum_{i=1}^s a_i \dntn{\underline{S_i \sigma_i}}, \quad \beta = \Sum_{i=1}^t b_i \dntn{\underline{T_i \tau_i}}, \quad \gamma = \Sum_{i=1}^r c_i \dntn{\underline{q_i}}.
\]
Note that
\[
\dntn{{_\delta}\pStep_A}(\Psi) = \vacu_{\dntn{{_\delta}\pLeft_A}(\alpha^{\otimes 3} \otimes \beta \otimes \gamma^{\otimes 2})} \otimes \vacu_{\dntn{{_\delta}\pRight_A}(\alpha^{\otimes 2} \otimes \beta^{\otimes 2} \otimes \gamma^{\otimes 2})} \otimes \vacu_{\dntn{{_\delta}\pState_A}(\alpha \otimes \gamma)}, 
\]
and so the problem reduces to computing the values of each of $\dntn{{_\delta}\pLeft}$,  $\dntn{{_\delta}\pRight}$, and $\dntn{{_\delta}\pState}$ on the appropriate number of copies of $\alpha, \beta, \gamma$. Write $(\hat{\sigma}_i^j, \hat{q}_i^j, \hat{d}_i^j) = \delta(\sigma_i, q_j)$. We have:
\begin{itemize}
    \item $\dntn{{_\delta}\pLeft_A}(\dntn{\underline{S_i \sigma_i}} \otimes \dntn{\underline{S_j \sigma_j}} \otimes \dntn{\underline{S_k \sigma_k}} \otimes \dntn{\underline{T_l \tau_l}} \otimes \dntn{\underline{q_m}}\otimes\dntn{\underline{q_n}}) 
    = \delta_{\hat{d}_k^n = 0} \dntn{\underline{S_i}} + \delta_{\hat{d}_k^n = 1} \dntn{\underline{S_i \hat{\sigma}_j^m \tau_l}}$,
    \item $\dntn{{_\delta}\pRight_A}(\dntn{\underline{S_j \sigma_j}} \otimes \dntn{\underline{S_k \sigma_k}} \otimes \dntn{\underline{T_l \tau_l}} \otimes \dntn{\underline{T_i \tau_i}} \otimes \dntn{\underline{q_m}}\otimes\dntn{\underline{q_n}}) 
    = \delta_{\hat{d}_k^n = 0}\dntn{\underline{T_i \tau_l \hat{\sigma}_j^m}}  + \delta_{\hat{d}_k^n = 1} \dntn{\underline{T_i}}$,
    \item $\dntn{{_\delta}\pState_A}(\dntn{\underline{S_i \sigma_i}} \otimes \dntn{\underline{q_j}}) 
    = \dntn{\underline{\hat{q}_i^j}}$,
\end{itemize}
where $\delta$ on the right is the Kronecker delta. Using this, we compute
\allowdisplaybreaks

\begin{lemma}\label{lemma:denotation_step} The vector $\dntn{{_\delta}\pLeft_A}(\alpha^{\otimes 3} \otimes \beta \otimes \gamma^{\otimes 2})$ is equal as an element of $\dntn{\tBint_A}$ to
\begin{align}
\;&
\left(\Sum_{i=1}^s \Sum_{j=1}^r a_ic_j \delta_{\hat{d}_i^j = 0}\right)
\left(\Sum_{i=1}^s a_i\right)
\left(\Sum_{i=1}^t b_i\right)
\left(\Sum_{i=1}^r c_i\right)
\left(\Sum_{i=1}^s a_i\dntn{\underline{S_i}} \right)\notag
\\& 
+ 
\left(\Sum_{i=1}^s \Sum_{j=1}^r a_ic_j \delta_{\hat{d}_i^j = 1}\right)
\left(\Sum_{i=1}^s\Sum_{j=1}^s \Sum_{k=1}^t \Sum_{l=1}^r a_i a_j b_k c_l \dntn{\underline{S_i \hat{\sigma}_j^l \tau_k}}\right)\,. \label{eq:leftdenotation}
\end{align}
The vector $\dntn{{_\delta}\pRight_A}(\alpha^{\otimes 2} \otimes \beta^{\otimes 2} \otimes \gamma^{\otimes 2})$ is equal as an element of $\dntn{\tBint_A}$ to
\begin{align}
\;&
\left(\Sum_{i=1}^s \Sum_{j=1}^r a_ic_j \delta_{\hat{d}_i^j = 1}\right)
\left(\Sum_{i=1}^s a_i\right)
\left(\Sum_{i=1}^t b_i\right)
\left(\Sum_{i=1}^r c_i\right)
\left(\Sum_{i=1}^t b_i\dntn{\underline{T_i}} \right)\notag
\\& 
+ 
\left(\Sum_{i=1}^s \Sum_{j=1}^r a_ic_j \delta_{\hat{d}_i^j = 0}\right)
\left(\Sum_{i=1}^t\Sum_{j=1}^s \Sum_{k=1}^t \Sum_{l=1}^r b_i a_j b_k c_l \dntn{\underline{T_i \hat{\sigma}_j^l \tau_k}}\right)\,. \label{eq:rightdenotation}
\end{align}
The vector $\dntn{{_\delta}\pState_A}(\alpha \otimes \gamma)$ is equal as an element of $\dntn{\tBool_A}$ to
\be\label{eq:statedenotation}
\Sum_{i=1}^s \Sum_{j=1}^r a_ic_j \dntn{\underline{\hat{q}_i^j}}.
\ee
\end{lemma}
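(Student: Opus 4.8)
The plan is to reduce everything to a finite bookkeeping computation by exploiting multilinearity. Each of ${_\delta}\pLeft_A$, ${_\delta}\pRight_A$, ${_\delta}\pState_A$ is a proof of some sequent $\Gamma\vdash B$, so its denotation in the Sweedler semantics is a $k$-multilinear map on the tensor product of the denotations of the formulas appearing in $\Gamma$; for instance $\dntn{{_\delta}\pLeft_A}$ is a multilinear map $\dntn{\tBint_{A^3}}^{\otimes 3}\otimes\dntn{\tBint_{A^3}}\otimes\dntn{\tNBool_{A^3}}^{\otimes 2}\lto\dntn{\tBint_A}$, and its values on tensors of denotations of binary integers and $n$-booleans are precisely the three bullet points preceding the statement. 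So it remains to expand $\alpha,\beta,\gamma$ in the indicated bases, use multilinearity to bring the finite sums outside, substitute the bullet-point values, and regroup.

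Carrying this out for ${_\delta}\pLeft_A$: writing $\alpha=\Sum_{i=1}^s a_i\dntn{\underline{S_i\sigma_i}}$, $\beta=\Sum_{i=1}^t b_i\dntn{\underline{T_i\tau_i}}$ and $\gamma=\Sum_{i=1}^r c_i\dntn{\underline{q_i}}$ and expanding, $\dntn{{_\delta}\pLeft_A}(\alpha^{\otimes 3}\otimes\beta\otimes\gamma^{\otimes 2})$ becomes a sum over $i,j,k\in\{1,\dots,s\}$, $l\in\{1,\dots,t\}$, $m,n\in\{1,\dots,r\}$ with coefficient $a_ia_ja_kb_lc_mc_n$ of the value of $\dntn{{_\delta}\pLeft_A}$ on the six-fold tensor of the corresponding basis vectors, which by the first bullet point equals $\delta_{\hat{d}_k^n=0}\dntn{\underline{S_i}}+\delta_{\hat{d}_k^n=1}\dntn{\underline{S_i\hat{\sigma}_j^m\tau_l}}$. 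Here one has only to read off from Figure \ref{fig: proof of left} which input copy is consumed by $\pTail_A$, which two feed the $\pHead_A$'s supplying $\sigma$, which $T\tau$ copy feeds the $\pHead_A$ supplying $\tau$, and which state copies feed ${^0_\delta}\pTrans_A$ (for $\sigma'$) and ${^2_\delta}\pTrans_A$ (for $d$), with ${_n}\pBooltype_A$ adjusting base types. Splitting the value into its two summands and using that a finite sum of products factors whenever its factors depend on disjoint blocks of summation indices: in the first summand $\delta_{\hat{d}_k^n=0}$ depends only on $(k,n)$ and $\dntn{\underline{S_i}}$ only on $i$, so the six-index sum factors as
\[
\Big(\Sum_{k=1}^s\Sum_{n=1}^r a_kc_n\delta_{\hat{d}_k^n=0}\Big)\Big(\Sum_{i=1}^s a_i\dntn{\underline{S_i}}\Big)\Big(\Sum_{j=1}^s a_j\Big)\Big(\Sum_{l=1}^t b_l\Big)\Big(\Sum_{m=1}^r c_m\Big),
\]
which, after renaming dummy variables, is the first line of \eqref{eq:leftdenotation}; in the second summand $\delta_{\hat{d}_k^n=1}$ again depends only on $(k,n)$, so the sum factors as $\big(\Sum_{k=1}^s\Sum_{n=1}^r a_kc_n\delta_{\hat{d}_k^n=1}\big)$ times the four-index sum $\Sum_{i,j,l,m}a_ia_jb_lc_m\dntn{\underline{S_i\hat{\sigma}_j^m\tau_l}}$, which is the second line of \eqref{eq:leftdenotation}.

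The other two statements follow by the same method. For ${_\delta}\pRight_A$ one feeds $\alpha^{\otimes 2}\otimes\beta^{\otimes 2}\otimes\gamma^{\otimes 2}$ and uses the second bullet point in place of the first --- now two $S\sigma$ copies supply $\sigma$, one $T\tau$ copy supplies $\tau$ via $\pHead_A$ and the other supplies $T$ via $\pTail_A$, and the use of ${^1}\pRecomb_A$ rather than ${^0}\pRecomb_A$ interchanges the roles of $\hat{d}=0$ and $\hat{d}=1$ --- and then collects terms exactly as above to obtain \eqref{eq:rightdenotation}. For ${_\delta}\pState_A$ one needs only bilinearity in $\alpha$ and $\gamma$ together with the third bullet point $\dntn{{_\delta}\pState_A}(\dntn{\underline{S_i\sigma_i}}\otimes\dntn{\underline{q_j}})=\dntn{\underline{\hat{q}_i^j}}$, which immediately yields \eqref{eq:statedenotation}. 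The only part of the argument that requires genuine attention is the index bookkeeping in the middle paragraph --- correctly matching each tensor factor of the input to the sub-proof that consumes it, so that the bullet-point formulas are instantiated with the right subscripts and superscripts; once that is pinned down, every remaining step is a routine re-indexing of a finite sum.
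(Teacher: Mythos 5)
Your proposal is correct and is essentially the paper's own argument: the paper gives no separate proof of Lemma \ref{lemma:denotation_step} beyond the three displayed values of $\dntn{{_\delta}\pLeft_A}$, $\dntn{{_\delta}\pRight_A}$, $\dntn{{_\delta}\pState_A}$ on basis tensors, and the lemma is exactly the multilinear expansion and regrouping that you carry out. The only mismatch is in the left-move term of \eqref{eq:rightdenotation}, where your computation (correctly, following ${^1}\pRecomb_A$ and the paper's own bullet point) produces $\dntn{\underline{T_i \tau_k \hat{\sigma}_j^l}}$ whereas the displayed formula prints $\dntn{\underline{T_i \hat{\sigma}_j^l \tau_k}}$ --- an internal inconsistency of the paper in the concatenation order, not an error in your argument.
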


\begin{remark} These formulas only describe the values of $\dntn{{_\delta} \pStep_A}$ on certain group-like elements, but the values on more general kets can be derived from these formulas by taking derivatives; see \cite[Corollary 4.5]{clift_murfet3}. 
\end{remark}

\begin{remark} From the calculations in the lemma we can extract polynomial functions, as explained in Section \ref{section:denotations_plain}. To do this we restrict the domain to sequences of bounded length, and identify such sequences with proofs. By Remark \ref{remark:findgoodA} as long as we choose our type $A$ such that $\dim\dntn{A} > c/2$ the set of denotations
\[
\big\{ \dntn{\underline{S}_A} \big\}_{S \in \Sigma^{\le c}}
\]
is a linearly independent set in $\dntn{\tBint_A}$, and in particular these denotations are all distinct, and we may identify $\Sigma^{\le c}$ with the set of proofs $\cat{P}_c = \{ \underline{S}_A \}_{S \in \Sigma^{\le c}}$. Then by Proposition \ref{prop:fpi} if we fix integers $a,b$ and then choose $A$ such that $\dim(\dntn{A}) > \tfrac{1}{2}\max\{a+1,b+1\}$ there is a unique function $F_{{_\delta} \pLeft_A}$ making the diagram
\be
\xymatrix@C+3pc@R+1pc{
{!} \dntn{\tBint_{A^3}} \otimes {!} \dntn{\tBint_{A^3}} \otimes {!} \dntn{\tBool_{A^3}} \ar[r]^-{\dntn{{_\delta} \pLeft_A}} & \dntn{\tBint_A}\\
k\Sigma^{\le a} \times k\Sigma^{\le b} \times kQ \ar[u]^-{\iota}\ar[r]_-{F_{{_\delta} \pLeft_A}} & k \Sigma^{\le a+1} \ar[u]_-{\dntn{-}}
}
\ee
commute. The formula for the function $F_{{_\delta} \pLeft_A}$ is given by reading the formula \eqref{eq:leftdenotation} with $\dntn{\underline{S_i}}$ replaced by $S_i$ and $\dntn{\underline{S_i \hat{\sigma}_j^l \tau_k}}$ replaced by $S_i \hat{\sigma}_j^l \tau_k$, and similarly for $F_{{_\delta} \pRight_A}$ and $F_{{_\delta} \pState_A}$.
\end{remark}

\subsection{The Boolean version}\label{section:actionofstep}

Our ultimate purpose in giving the encodings in this paper is to understand the derivatives of Turing machines in \cite{clift_murfet3}, and we are therefore interested in encodings which allow us to differentiate the encoding with respect to the contents of individual tape squares. One natural way to obtain such an encoding is to derive it from $\pStep$ and that is our goal in this section. In Section \ref{section:direct_boolean} we give encodings obtained directly, without going via $\pStep$.

The aim in this section is to construct a proof ${^p}\pBoolstep$ of
\be\label{eq:typecstep}
a \,{!} \tBool_B, b\, {!} \tBool_B, {!} {_n} \tBool_B \vdash \big({!} \tBool_A\big)^{\otimes c } \otimes \big({!} \tBool_A\big)^{\otimes d} \otimes {!} {_n} \tBool_A
\ee
for some power $B = A^{g(c,d,p)}$ such that if initially the state is $q$ and the tape reads
\[
\ldots,\, \Box,\, x_1,\,\ldots\,,\, \underline{x_a},\, y_1,\, \ldots\,,\, y_b,\, \Box,\, \ldots
\]
with the underline indicating the position of the head, then running the Turing machine for $p$ steps on this input yields
\[
\ldots\,, \Box,\, x'_1,\,\ldots\,, \underline{x'_{c}},\, y'_1,\, \ldots\,, y'_{d},\, \Box,\, \ldots
\]
The algorithm ${^p}\pBoolstep$ returns this state, as a sequence of booleans, in the given order. In this section to avoid an explosion of notation we drop the subscript $\delta$ and write $\pBoolstep$ when we should more correctly write ${_\delta} \pBoolstep$.

\begin{lemma} There is a proof $\pCast_A$ of $\tBool_A \vdash \tBint_A$ which converts a boolean to the equivalent binary sequence; that is, it encodes $\dntn{\underline{i}_A} \mapsto \dntn{\underline{i}_A}$ for $i \in \{0,1\}$.
\end{lemma}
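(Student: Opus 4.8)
The plan is to write down $\pCast_A$ explicitly; it will be a close cousin of the innermost part of $\pHead_A$ and of the proof ${_n}\pBooltype_A$ from Lemma \ref{fixing_bool_types}. Abbreviate $E = A \multimap A$, so that $\tBint_A = {!}E \multimap \big({!}E \multimap (A \multimap A)\big)$. I will build a cut-free proof whose last three rules are $\multimap R$, so the real content is a proof of
\[
\tBool_A,\; {!}E,\; {!}E,\; A \;\vdash\; A\,,
\]
where I read the two copies of ${!}E$ as the data $u$ (``append a $0$'') and $v$ (``append a $1$'') and the copy of $A$ as an input $a$. The guiding idea is that a single-digit binary integer should \emph{select} between $u$ and $v$ according to the boolean: it should compute the map $a \mapsto b\big(\langle d(u)(a),\, d(v)(a)\rangle\big)$, where $d$ is the universal map $\dntn{{!}E} \to \dntn{E}$ and $b \in \dntn{\tBool_A}$. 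The only subtlety is that this expression uses the linear input $a$ twice, which is legitimate precisely because the two occurrences live in the two arms of an additive $\& R$.

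Concretely, I would first apply $\multimap L$ to the formula $\tBool_A = (A\,\&\,A)\multimap A$ in the antecedent, taking the right premise to be the axiom $A \vdash A$; this reduces the goal to ${!}E, {!}E, A \vdash A\,\&\,A$. Next, $\& R$ splits this into two copies of ${!}E, {!}E, A \vdash A$. In the first copy I derelict the first ${!}E$ to obtain $E = A \multimap A$, weaken away the second ${!}E$ (permitted since it is of ${!}$-form), and apply $\multimap L$ to $E$ against $a : A$ with the right premise the axiom $A \vdash A$; the second copy is identical but derelicts the second ${!}E$ and weakens the first. An exchange and three $\multimap R$ rules then produce the proof of $\tBool_A \vdash \tBint_A$.

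To verify the denotation I would evaluate $\dntn{\pCast_A}$ on a boolean $b$ and on group-like inputs $\vacu_{\phi_0}, \vacu_{\phi_1} \in {!}\dntn{E}$ and $a \in \dntn{A}$: the two derelictions yield $d(\vacu_{\phi_i}) = \phi_i$, the weakenings apply the counit ($\vacu_{\phi_i} \mapsto 1$), the $\& R$ pairs the results into $(\phi_0(a),\phi_1(a)) \in \dntn{A} \oplus \dntn{A}$, and the final $\multimap L$ applies $b$, giving $b(\phi_0(a),\phi_1(a))$. For $b = \proj_0 = \dntn{\underline{0}_A}$ this equals $\phi_0(a)$, so $\dntn{\pCast_A}(\dntn{\underline{0}_A})$ is the element of $\dntn{\tBint_A}$ sending $(\vacu_{\phi_0},\vacu_{\phi_1}) \mapsto d(\vacu_{\phi_0})$; for $b = \proj_1$ one gets $(\vacu_{\phi_0},\vacu_{\phi_1}) \mapsto d(\vacu_{\phi_1})$. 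Under the convention that identifies the first argument of a binary integer with the digit $0$ and the second with the digit $1$ (as used in the proof of Proposition \ref{decomposing bints head}), these are exactly $\dntn{\underline{0}_A}$ and $\dntn{\underline{1}_A}$ viewed as binary integers, which is the claim.

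I do not expect a genuine obstacle. The one thing to keep straight is the linearity bookkeeping — that the lone input $a : A$ may be shared across the two arms of $\& R$, whereas both copies of ${!}E$ are genuinely needed since exactly one of them is derelicted and the other weakened in each arm — together with matching the final denotation to the binary-integer convention; both are routine variants of calculations already carried out for $\pHead_A$ and ${_n}\pBooltype_A$.
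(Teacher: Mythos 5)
Your proposal is correct and is essentially the paper's own proof: the paper builds $\pCast_A$ by applying $\& R$ to the two one-digit binary integers $\underline{0}_A, \underline{1}_A$ (viewed as proofs of ${!}E, {!}E, A \vdash A$), then $\multimap L$ against $\tBool_A$ with an axiom, then three $\multimap R$ rules, which is exactly your tree — your two branches (derelict one ${!}E$, weaken the other, apply $\multimap L$) are precisely those single-digit binary integer proofs spelled out. The denotation check also matches the paper's intent, so there is no gap.
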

\begin{proof}
Let $E = A \multimap A$. The proof is:
\begin{center}
\AxiomC{$\proofvdots{\underline{0}_A}$}
\noLine\UnaryInfC{${!}E, {!}E, A \vdash A$}
\AxiomC{$\proofvdots{\underline{1}_A}$}
\noLine\UnaryInfC{${!}E, {!}E, A \vdash A$}
\RightLabel{\scriptsize $\& R$}
\BinaryInfC{${!}E, {!}E, A \vdash A\&A$}
\AxiomC{}
\UnaryInfC{$A \vdash A$}
\RightLabel{\scriptsize $\multimap L$}
\BinaryInfC{$\tBool_A, {!}E, {!}E, A \vdash A$}
\doubleLine\RightLabel{\scriptsize $3\times \multimap R$}
\UnaryInfC{$\tBool_A \vdash \tBint_A$}
\DisplayProof
\end{center}
where the incoming proofs at the top are the binary integers $0,1$.
\end{proof}

\begin{lemma} There is a proof $\pRead^j_A$ of $\tBint_{A^{3^{j+2}}} \vdash \tBool_A$ for $j \ge 0$ which ``reads'' the symbol at the position $j$ from the right in a binary integer; that is, it encodes
\[
\dntn{\underline{s_r s_{r-1} \cdots s_1s_0}_{A^{3^{j+2}}}} \longmapsto \dntn{\underline{s_j}_A}\,.
\]
\end{lemma}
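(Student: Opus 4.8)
The plan is to obtain $\pRead^j_A$ by stripping the last $j$ digits of the binary integer using $j$ copies of the tail operation $\pTail$ from Proposition~\ref{decomposing bints tail}, and then reading off the new final digit using the head operation $\pHead$ from Proposition~\ref{decomposing bints head}. The one point that needs care is the bookkeeping of base types: each application of $\pTail_B : \tBint_{B^3} \vdash \tBint_B$ and of $\pHead_B : \tBint_{B^3} \vdash \tBool_B$ divides the exponent of the base by $3$, and since $\pHead$ lands in $\tBool$ on the \emph{reduced} base we also need one application of the conversion ${_2}\pBooltype_A : \tBool_{A^3} \vdash \tBool_A$ from Lemma~\ref{fixing_bool_types} (here we use that $\tBool_A = {_2}\tBool_A$ since $A \;\&\; A = A^2$). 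This accounts exactly for the $j+2$ factors of $3$ in the exponent $3^{j+2}$: $j$ for the copies of $\pTail$, one for $\pHead$, and one for ${_2}\pBooltype$.

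I would make this precise by induction on $j$. For $j = 0$, let $\pRead^0_A$ be the cut of $\pHead_{A^3} : \tBint_{A^{3^2}} \vdash \tBool_{A^3}$ against ${_2}\pBooltype_A : \tBool_{A^3} \vdash \tBool_A$; this is a proof of $\tBint_{A^{3^2}} \vdash \tBool_A$, and by the denotations recorded in Proposition~\ref{decomposing bints head} and Lemma~\ref{fixing_bool_types} it encodes $\dntn{\underline{s_r \cdots s_1 s_0}_{A^{3^2}}} \mapsto \dntn{\underline{s_0}_{A^3}} \mapsto \dntn{\underline{s_0}_A}$. For the inductive step, suppose $\pRead^{j-1}_A : \tBint_{A^{3^{j+1}}} \vdash \tBool_A$ has been constructed, and let $\pRead^j_A$ be the cut of $\pTail_{A^{3^{j+1}}} : \tBint_{A^{3^{j+2}}} \vdash \tBint_{A^{3^{j+1}}}$ against $\pRead^{j-1}_A$, a proof of $\tBint_{A^{3^{j+2}}} \vdash \tBool_A$. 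On denotations, $\pTail_{A^{3^{j+1}}}$ sends $\dntn{\underline{s_r \cdots s_1 s_0}_{A^{3^{j+2}}}}$ to $\dntn{\underline{s_r \cdots s_1}_{A^{3^{j+1}}}}$, and the inductive hypothesis says $\pRead^{j-1}_A$ then reads the digit in position $j - 1$ from the right of $\underline{s_r \cdots s_1}$, namely $s_j$. Hence $\pRead^j_A$ encodes $\dntn{\underline{s_r \cdots s_0}_{A^{3^{j+2}}}} \mapsto \dntn{\underline{s_j}_A}$, as required.

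It remains to treat the boundary case in which $j$ is at least the length of the binary integer, so that position $j$ lies in the blank region of the tape. Here one uses that $\pTail$ maps the empty binary integer to itself and $\pHead$ maps it to $\underline{0}_A$ --- the latter is exactly the final case of the proof of Proposition~\ref{decomposing bints head}, and the former is immediate from the construction in Proposition~\ref{decomposing bints tail} --- so that $\pRead^j_A$ returns $\underline{0}_A$, which is $\underline{s_j}_A$ since any out-of-range symbol is the blank symbol $0$.

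The only real obstacle is the index arithmetic: one must line up the exponents of $A$ so that every cut typechecks, and so that after stripping $j$ digits the head operation sees $s_j$ as the final digit. Once that bookkeeping is in place, correctness is just a matter of composing the denotations already computed for $\pHead$, $\pTail$ and ${_2}\pBooltype$.
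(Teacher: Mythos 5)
Your construction is correct and is essentially the paper's own proof: the paper also defines $\pRead^j_A$ by cutting together $j$ copies of $\pTail$ at descending base types with a single $\pHead$, and makes the same observation that sequences of length $\le j$ yield the blank boolean $\underline{0}$. The only difference is your explicit final ${}_2\pBooltype_A$ conversion from $\tBool_{A^3}$ to $\tBool_A$, which the paper's one-line composite leaves implicit (as written, its base-type subscripts do not quite line up to give target $\tBool_A$ without such a conversion), so your bookkeeping is if anything the more careful version of the same argument.
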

\begin{proof}
We use the proofs $\pHead_A : \tBint_{A^3} \vdash \tBint_A$ of Lemma \ref{decomposing bints head} and $\pTail_A: \tBint_{A^3} \vdash \tBint_A$ of Lemma \ref{decomposing bints tail}, cut together as follows (where $\psi \l \phi$ denotes the cut of $\psi, \phi$)
\[
\pHead_{A^{3^{j+1}}} \,\l\, \pTail_{A^{3^j}}\,\l\, \cdots \,\l\,\pTail_{A^3} \,\l\,\pTail_A : \tBint_{A^{3^{j+2}}} \vdash \tBool_A
\]
where there are $j$ copies of $\pTail$ at various base types. Note that if we apply $\pRead^j$ to the empty binary sequence, or to any sequence of length $\le j$, we obtain the boolean $\underline{0}$.
\end{proof}

\begin{lemma} There is a proof $\pMultread^a_A$ of ${!} \tBint_{A^{3^{a+1}}} \vdash \big({!}\tBool_A\big)^{\otimes a}$ which reads off $a$ symbols from the right end of a binary integer; that is, it encodes
\[
\dntn{\underline{s_r s_{r-1} \cdots s_1s_0}_{A^{3^{j+2}}}} \longmapsto \big( \dntn{\underline{s_{a-1}}_A}, \ldots, \dntn{\underline{s_{0}}_A} \big)\,.
\]
\end{lemma}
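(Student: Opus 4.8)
The plan is to exhibit $\pMultread^a_A$ as a \emph{component-wise plain} proof in the sense of Definition~\ref{defn:plain_comp}, with a single antecedent formula $A_1 = \tBint_{A^{3^{a+1}}}$ and output factors $B_1 = \cdots = B_a = \tBool_A$. It therefore suffices to produce, for $1 \le i \le a$, a plain proof
\[
\psi_i \colon {!}\tBint_{A^{3^{a+1}}} \vdash \tBool_A
\]
whose denotation sends $\dntn{\underline{s_r \cdots s_1 s_0}_{A^{3^{a+1}}}}$ to $\dntn{\underline{s_{a-i}}_A}$, with the understanding (inherited from the lemma on $\pRead^j_A$) that $s_{a-i}$ denotes the blank symbol $0$ when $a-i$ exceeds the length of the sequence. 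Granting the $\psi_i$, the proof $\pMultread^a_A$ is assembled in the canonical way dictated by Definition~\ref{defn:plain_comp}: promote each $\psi_i$ to a proof of ${!}\tBint_{A^{3^{a+1}}} \vdash {!}\tBool_A$, combine the $\prom(\psi_i)$ by $(a-1)$ applications of $\otimes R$, and contract the resulting $a$ copies of ${!}\tBint_{A^{3^{a+1}}}$ down to one. This is component-wise plain, and by the discussion following Definition~\ref{defn:plain_comp} its denotation is the morphism of coalgebras into $\big({!}\dntn{\tBool_A}\big)^{\otimes a}$, regarded as the Cartesian product of coalgebras (Section~\ref{section:coalg_prod}), determined by the $\dntn{\prom(\psi_i)}$. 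Evaluated on the group-like element $\vacu_{\dntn{\underline{s_r\cdots s_0}}}$ this produces $\big(\vacu_{\dntn{\underline{s_{a-1}}_A}}, \ldots, \vacu_{\dntn{\underline{s_0}_A}}\big)$, which is exactly the claimed formula.

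To construct $\psi_i$, observe first that $\pRead^{a-1}_A$ already has premise $\tBint_{A^{3^{(a-1)+2}}} = \tBint_{A^{3^{a+1}}}$, so $\psi_1$ may be taken to be the dereliction of $\pRead^{a-1}_A$. For $2 \le i \le a$ we interpose a base-type coercion
\[
c_i \colon \tBint_{A^{3^{a+1}}} \vdash \tBint_{A^{3^{(a-i)+2}}}\,,
\]
which makes sense because $3^{(a-i)+2}$ divides $3^{a+1}$ for $i \ge 1$, and which is required to \emph{preserve} the encoded binary sequence, i.e.\ $\dntn{c_i}\big(\dntn{\underline{S}_{A^{3^{a+1}}}}\big) = \dntn{\underline{S}_{A^{3^{(a-i)+2}}}}$ for every $S$. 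Such a coercion is built in the style of Propositions~\ref{decomposing bints head} and~\ref{decomposing bints tail}: writing $A^{3^{a+1}} = (A^{N})^{k}$ with $N = 3^{(a-i)+2}$ and $k = 3^{i-1}$, one produces the diagonal embedding $A^{N} \hookrightarrow (A^{N})^{k}$ by iterated $\&R$, feeds into the input $\tBint$ the two digit-functions obtained by letting each digit-function $\alpha$ of the output $\tBint$ act blockwise as $\alpha^{\times k}$, and projects onto the first block at the end; since a word in the $\alpha^{\times k}$ applied to a diagonal seed is the diagonal of the corresponding word in the $\alpha$'s, this preserves the encoded sequence. We then set $\psi_i$ to be the dereliction of the cut $\pRead^{a-i}_A \l c_i \colon \tBint_{A^{3^{a+1}}} \vdash \tBool_A$. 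Since this proof has a single, un-banged antecedent, the single dereliction exhibits $\psi_i$ in the form of Definition~\ref{defn:plain} with degree vector $(1)$, so $\psi_i$ is plain.

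It remains only to check the denotation of $\psi_i$ for $i \ge 2$, and this is immediate from the two ingredients: $\dntn{c_i}$ sends $\dntn{\underline{S}_{A^{3^{a+1}}}}$ to $\dntn{\underline{S}_{A^{3^{(a-i)+2}}}}$, and $\dntn{\pRead^{a-i}_A}$ then sends the latter to $\dntn{\underline{s_{a-i}}_A}$ by the lemma on $\pRead^j_A$ (including the boundary case, which correctly returns $\dntn{\underline{0}_A}$). Hence $\dntn{\psi_i}\big(\vacu_{\dntn{\underline{S}}}\big) = \vacu_{\dntn{\underline{s_{a-i}}_A}}$, as needed. The only step requiring genuine care is the construction and verification of the coercions $c_i$ together with the bookkeeping of the exponents $3^{(a-i)+2}$; everything else is a formal consequence of the closure properties of plain and component-wise plain proofs (Lemma~\ref{lemma:plain_under_comp}, Proposition~\ref{prop:cut_componentwiseplain}) and the description of denotations of component-wise plain proofs via Cartesian products of coalgebras.
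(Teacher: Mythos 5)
Your proof is correct, and the overall skeleton is the same as the paper's (one reader per tape position, then derelict, promote, tensor and contract), but you handle the base-type bookkeeping differently. The paper never coerces binary integers at all: it exploits the fact that $\pRead^{j}$ is available at an arbitrary base type, instantiating $\pRead^{a-i}$ at base type $A^{3^{i-1}}$ so that its premise is literally $\tBint_{A^{3^{a+1}}}$, and then fixes the \emph{output} booleans by cutting against iterated copies of ${_n}\pBooltype_B$ (Lemma \ref{fixing_bool_types}) at the various intermediate base types. You instead fix the \emph{input}, constructing for each $i \ge 2$ a new base-change proof $c_i \colon \tBint_{A^{3^{a+1}}} \vdash \tBint_{A^{3^{(a-i)+2}}}$ and then reading at base type $A$. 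Your coercion does exist and works as you sketch --- it can be built exactly in the style of $\pTail_A$, with the proof $\rho$ replaced by a blockwise variant ${!}(B \multimap B) \vdash B^k \multimap B^k$ sending $\vacu_\gamma$ to $\gamma^{\times k}$ and the proof $\pi$ replaced by diagonal-then-project, and on group-like inputs this preserves the encoded word --- so the verification goes through; but it is extra machinery that the paper avoids entirely by reusing an existing lemma, and this is precisely why $\pRead^{j}$ was set up with a free base type parameter. What your route buys is a reusable $\tBint$-base-change gadget and a cleaner statement that each component reads at base type $A$; what the paper's route buys is economy, since no new proof beyond $\pRead$ and ${_n}\pBooltype$ needs to be constructed or verified. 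Your framing of the final assembly via component-wise plainness and the Cartesian product of coalgebras is consistent with how the paper later uses $\pMultread$ (Lemma \ref{lemma:stepbit_plain}), and your degree-vector argument that each $\psi_i$ is plain is fine.
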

\begin{proof}
We use the proofs
\begin{align*}
\pRead^{a-1}_A &: \tBint_{A^{3^{a+1}}} \vdash \tBool_A\\
\pRead^{a-2}_{A^3} &: \tBint_{A^{3^{a+1}}} \vdash \tBool_{A^3}\\
\pRead^{a-3}_{A^{3^2}} &: \tBint_{A^{3^{a+1}}} \vdash \tBool_{A^3}\\
&\vdots\\
\pRead^0_{A^{3^{a-1}}} &: \tBint_{A^{3^{a+1}}} \vdash \tBool_{A^{3^{a-1}}}
\end{align*}
to read off the relevant symbols, and then fix the base types by cutting against an appropriate number of copies of the proof ${_n}\pBooltype_B$ of Lemma \ref{fixing_bool_types} with various base types $B$. This leaves us with $a$ proofs of $\tBint_{A^{3^{a+1}}} \vdash \tBool_A$ which we then derelict on the left and promote on the right, and then tensor together to obtain a proof of
\[
a\, {!} \tBint_{A^{3^{a+1}}} \vdash \big( {!} \tBool_A )^{\otimes a}\,.
\]
A series of contractions then gives the desired proof $\pMultread^a_A$.
\end{proof}

\begin{definition} For $c,d \ge 1$ the proof $\pUnpack^{c,d}_A$ of
\[
\tTur_{A^{3^{\max\{c,d\}+1}}} \vdash \big({!} \tBool_A\big)^{\otimes c } \otimes \big({!} \tBool_A\big)^{\otimes d} \otimes {!} {_n} \tBool_A
\]
is defined as follows: let $e = \max\{c,d\}$ and consider the proofs
\begin{align*}
\pMultread^c_{A^{3^{e-c}}} &: {!} \tBint_{A^{3^{e+1}}} \vdash \big({!}\tBool_{A^{3^{e-c}}}\big)^{\otimes c}\\
\pMultread^d_{A^{3^{e-d}}} &: {!} \tBint_{A^{3^{e+1}}} \vdash \big({!}\tBool_{A^{3^{e-d}}}\big)^{\otimes d}\,.
\end{align*}
We cut these proofs against an appropriate number of promoted copies of ${_n}\pBooltype_B$ for various base types $B$, to obtain proofs of sequents with conclusion $\big({!}\tBool_{A}\big)^{\otimes c}, \big({!}\tBool_{A}\big)^{\otimes d}$. These are tensored with the identity on ${!} {}_n \tBool_A$ and with the proof that reverses the output of the $\pMultread$ corresponding to the right hand part of the tape.
\end{definition}

\begin{definition} The proof $\pPack^{a,b}_A$ of $a \,{!} \tBool_A, b\, {!} \tBool_A, {!} {_n} \tBool_A \vdash \tTur_A$ is given by the tensor product of the identity on ${!}{_n} \tBool_A$ and the proofs
\begin{center}
\AxiomC{$\proofvdots{\pCast_A^{\otimes a}}$}
\noLine\UnaryInfC{$a\,\tBool_A \vdash \tBint_A^{\otimes a}$}
\AxiomC{$\proofvdots{\pConcat}$}
\noLine\UnaryInfC{$\tBint^{\otimes a}_A \vdash \tBint_A$}
\RightLabel{\scriptsize cut}
\BinaryInfC{$a\,\tBool_A \vdash \tBint_A$}
\RightLabel{\scriptsize der,prom}
\doubleLine\UnaryInfC{$a\,{!}\tBool_A \vdash {!}\tBint_A$}
\DisplayProof
\end{center}
and
\begin{center}
\AxiomC{$\proofvdots{\pCast_A^{\otimes b}}$}
\noLine\UnaryInfC{$b\,\tBool_A \vdash \tBint_A^{\otimes b}$}
\RightLabel{\scriptsize ex}
\doubleLine\UnaryInfC{$b\,\tBool_A \vdash \tBint_A^{\otimes b}$}
\AxiomC{$\proofvdots{\pConcat}$}
\noLine\UnaryInfC{$\tBint^{\otimes b}_A \vdash \tBint_A$}
\RightLabel{\scriptsize cut}
\BinaryInfC{$b\,\tBool_A \vdash \tBint_A$}
\RightLabel{\scriptsize der,prom}
\doubleLine\UnaryInfC{$b\,{!}\tBool_A \vdash {!}\tBint_A$}
\DisplayProof
\end{center}
where the exchange rule reverses the order of the inputs.
\end{definition}
 
To simplify the formulas in the next definition, let us write
\begin{align*}
\Gamma^{a,b}_A &= a \,{!} \tBool_A, b\, {!} \tBool_A, {!} {_n} \tBool_A\,,\\
X^{c,d}_A &= \big({!} \tBool_A\big)^{\otimes c } \otimes \big({!} \tBool_A\big)^{\otimes d} \otimes {!} {_n} \tBool_A\,.
\end{align*} 

\begin{definition}\label{defn:cstep} For $a,b,c,d,p \ge 1$ the proof ${^p}\pBoolstep_A^{a,b,c,d}$ is
\begin{center}
\AxiomC{$\proofvdots{\pPack^{a,b}_{A^{3^{e+1}}}}$}
\noLine\UnaryInfC{$\Gamma^{a,b}_{A^{3^{p+e+1}}} \vdash \tTur_{A^{3^{p+e+1}}}$}
\AxiomC{$\proofvdots{{^p_\delta}\pStep_{A^{3^{e+1}}}}$}
\noLine\UnaryInfC{$\tTur_{A^{3^{p+e+1}}} \vdash \tTur_{A^{3^{e+1}}}$}
\RightLabel{\scriptsize cut}
\BinaryInfC{$\Gamma^{a,b}_{A^{3^{p+e+1}}} \vdash \tTur_{A^{3^{e+1}}}$}
\AxiomC{$\proofvdots{\pUnpack^{c,d}_A}$}
\noLine\UnaryInfC{$\tTur_{A^{3^{e+1}}} \vdash X^{c,d}_A$}
\RightLabel{\scriptsize cut}
\BinaryInfC{$\Gamma^{a,b}_{A^{3^{p+e+1}}} \vdash X^{c,d}_A$}
\DisplayProof
\end{center}
where $e = \max\{c,d\}$.
\end{definition}

\begin{lemma}\label{lemma:stepbit_plain} The proof ${^p}\pBoolstep^{a,b,c,d}_A$ is component-wise plain.
\end{lemma}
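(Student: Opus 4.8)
The plan is to realise ${^p}\pBoolstep^{a,b,c,d}_A$ as an iterated composition of component-wise plain proofs and then quote Proposition~\ref{prop:cut_componentwiseplain}. By Definition~\ref{defn:cstep} this proof is, up to cut-elimination (and associativity of cut), the composite in the sense of Proposition~\ref{prop:cut_componentwiseplain} of $\pPack^{a,b}$, ${^p_\delta}\pStep$ and $\pUnpack^{c,d}$ (at the appropriate base types). First I would record the bookkeeping making that proposition applicable: all the source and target sequents have the required shape, since $\tTur_B = {!}\tBint_B \otimes {!}\tBint_B \otimes {!}{_n}\tBool_B$, the context $\Gamma^{a,b}_B$ is a list of formulas of the form ${!}C$, and $X^{c,d}_A$ is a tensor product of formulas of the form ${!}C$. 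Since Proposition~\ref{prop:cut_componentwiseplain} shows the class of component-wise plain proofs is closed under composition, it then suffices to show each of the three constituents is component-wise plain; the middle one, ${^p_\delta}\pStep$, is handled by Proposition~\ref{prop:pstepplain}.

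Next I would treat $\pPack^{a,b}_A$ by exhibiting its three components directly. Let $\chi_a:a\,\tBool_A\vdash\tBint_A$ be the cut of $\pCast^{\otimes a}_A$ against $\pConcat$, and $\chi_b$ the analogue using the extra exchange; each uses every one of its $\tBool_A$-hypotheses exactly once. The components of $\pPack^{a,b}_A$ are then: the proof of $\Gamma^{a,b}_A\vdash\tBint_A$ obtained from $\chi_a$ by dereliction on its $a$ hypotheses and weakening in the remaining ones; the analogous proof built from $\chi_b$; and the dereliction of the identity on ${_n}\tBool_A$, weakened appropriately. Each has a degree vector with entries in $\{0,1\}$, so by Definition~\ref{defn:plain} it is plain, using Remark~\ref{remark:stable_plain} to absorb the weakenings. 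Since in $\pPack^{a,b}_A$ the three constituent proofs are tensored with disjoint left contexts, the standard cut-elimination identity ``a hypothesis contracted against a weakening equals the same hypothesis used once'' shows that $\pPack^{a,b}_A$ agrees up to cut-elimination with the assembly of these components in the sense of Definition~\ref{defn:plain_comp}; hence $\pPack^{a,b}_A$ is component-wise plain.

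For $\pUnpack^{c,d}_A$ I would first observe that each $\pMultread^a_B$ is component-wise plain: by construction it is the tensor of the promotions of proofs $\zeta_i:\tBint_{B^{3^{a+1}}}\vdash\tBool_B$ followed by contractions, where each $\zeta_i$ is a composite of copies of $\pHead$, $\pTail$ (Propositions~\ref{decomposing bints head} and~\ref{decomposing bints tail}) and a base-fixing cast ${_n}\pBooltype$ (Lemma~\ref{fixing_bool_types}), none of which uses its $\tBint$-hypothesis more than once; thus $\zeta_i$ uses $\tBint_{B^{3^{a+1}}}$ exactly once, the dereliction of $\zeta_i$ is plain with degree vector $(1)$, and $\pMultread^a_B$ is literally in the form of Definition~\ref{defn:plain_comp}. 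Then $\pUnpack^{c,d}_A$ is obtained by cutting $\pMultread^c$ and $\pMultread^d$ (at suitable base types) against promotions of further base-fixing casts, and tensoring with the identity on ${!}{_n}\tBool_A$ and with the reversal permutation on the right-hand block. Each of those casts, the identity, and the reversal is a (tensor of) plain proofs with $0/1$ degree vectors, hence component-wise plain, so one more application of Proposition~\ref{prop:cut_componentwiseplain} finishes this case, and therefore the proof.

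I expect the main obstacle to be organisational rather than mathematical: it lies in the bookkeeping for $\pUnpack^{c,d}_A$ --- tracking the contractions introduced by each $\pMultread$, the various base types $A^{3^{e-c}}$, $A^{3^{e-d}}$, and checking that the base-fixing casts and the reversal really do assemble, via Proposition~\ref{prop:cut_componentwiseplain}, into one component-wise plain proof. Conceptually everything in sight uses its hypotheses linearly except where an explicit contraction occurs, so plainness of the building blocks together with closure of the class under composition and weakening (Proposition~\ref{prop:cut_componentwiseplain} and Remark~\ref{remark:stable_plain}) do all the work.
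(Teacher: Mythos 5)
Your proposal is correct and follows essentially the same route as the paper: the paper's proof simply invokes Proposition~\ref{prop:cut_componentwiseplain} and Proposition~\ref{prop:pstepplain} and declares the component-wise plainness of $\pPack$ and $\pUnpack$ obvious. Your additional detail on why $\pPack^{a,b}_A$ and $\pUnpack^{c,d}_A$ are component-wise plain is exactly the bookkeeping the paper leaves implicit, and it checks out.
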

\begin{proof}
By Proposition \ref{prop:cut_componentwiseplain} and Proposition \ref{prop:pstepplain} it suffices to argue that $\pPack$ and $\pUnpack$ are component-wise plain, but this is obvious.
\end{proof}

\section{Direct Boolean encodings}\label{section:direct_boolean}

In the previous section, we developed an encoding of the step function as an operation on a sequence of booleans, rather than a pair of binary integers. It is natural to ask if such an encoding can be designed directly, without going via $\pStep$. 

Ideally, one could define a family of proofs of
\[
a\, {!}\tBool_A, b\, {!}\tBool_A, {!}\tNBool_A \vdash {!}\tBool_A^{\otimes c}\otimes{!}\tBool_A^{\otimes d} \otimes {!}\tNBool_A
\]
indexed by $a,b,c,d$, as in Section \ref{section:actionofstep}. However there is an issue with this approach, in that it cannot be easily iterated. More specifically, we cannot simply cut multiple one-step proofs against one another, since the computation of all steps beyond the first must know where the tape head has moved to, which is invisible when encoding the tape contents as simply a string of booleans.

In this section, we present two possible remedies for this issue. The first is to increase the number of booleans representing the tape after each step, in such a way to ensure that the head is always in the centre of the working section of the tape. We call this the \textit{relative encoding} and denote the proof $\pRelStep$ (see Section \ref{section:relstepdefn}). The second solution is to simply keep track of the head position through the use of an additional $h$-boolean. We call this the \textit{absolute encoding} and denote the proof $\pAbsStep$ (see Section \ref{section:absolutestep}). While this has the advantage that the positions of each tape square do not change after each step, it is limited by the fact that one must remain in a section of the tape of length $h$.

Fix a finite set of states $Q = \{0, ..., n-1\}$ and a finite tape alphabet $\Sigma = \{0, ..., s-1\}$, and let $\delta: \Sigma \times Q \to \Sigma \times Q \times \{\text{left, right}\}$ be the transition function. For $i \in \{0,1,2\}$, we write $\delta_i = \proj_i \circ\, \delta$ for the $i$th component of $\delta$. Throughout $A$ is a fixed type, and we write  $\tBool$ for $\tBool_A$. We give the versions of $\pRelStep$ and $\pAbsStep$ in which the head of the Turing machine must either move left or right in each time step; the modification to allow the head to remain stationary is routine.

\begin{definition}
Given $n \in \NN$, we write ${^n}\pEval$ for the following proof, whose denotation is the evaluation map.
\begin{center}
\AxiomC{}
\UnaryInfC{$A^n \vdash A^n$}
\AxiomC{}
\UnaryInfC{$A \vdash A$}
\RightLabel{\scriptsize $\multimap L$}
\BinaryInfC{$\tNBool, A^n \vdash A$}
\DisplayProof
\end{center}
\end{definition}

\begin{definition}
Given two proofs $\pi, \rho$ of $\Gamma \vdash A$, we write $\pi \,\&\, \rho$ for the following proof.
\begin{center}
\AxiomC{$\proofvdots{\pi}$}
\noLine\UnaryInfC{$\Gamma \vdash A$}
\AxiomC{$\proofvdots{\rho}$}
\noLine\UnaryInfC{$\Gamma \vdash A$}
\RightLabel{\scriptsize $\& R$}
\BinaryInfC{$\Gamma \vdash A^2$}
\DisplayProof
\end{center}
As for formulas, we write $\pi^n$ for the proof $\pi \,\&\, \dots \,\&\, \pi$, where there are $n$ copies of $\pi$.
\end{definition}

We will frequently need to discard unwanted booleans without the use of exponentials. This can be achieved with the following family of proofs.

\begin{definition}
Let $\pi$ be a proof of $\Gamma \vdash A$. We recursively define a family of proofs 
\[
{_n}\pBoolWeak(\pi, k) : \Gamma, k\, \tNBool \vdash A
\] 
for $k \geq 0$ by defining ${_n}\pBoolWeak(\pi, 0) = \pi$, and defining ${_n}\pBoolWeak(\pi, k+1)$ as the proof
\begin{center}
\AxiomC{$\proofvdots{{_n}\pBoolWeak(\pi, k)^n}$}
\noLine\UnaryInfC{$\Gamma, k\, \tNBool \vdash A^n$}
\AxiomC{}
\UnaryInfC{$A \vdash A$}
\RightLabel{\scriptsize $\multimap L$}
\BinaryInfC{$\Gamma, (k+1)\, \tNBool \vdash A$}
\DisplayProof
\end{center}
\end{definition}

\subsection{Relative step}\label{section:relstepdefn}

The goal of this section is to construct a component-wise plain proof
\[
{_h}\pRelStep : {!}\tSBool^{\otimes 2h+1} \otimes {!}\tNBool \vdash {!}\tSBool^{\otimes 2h+3} \otimes {!}\tNBool
\]
which encodes a single step transition of a Turing machine, such that the position of the head remains in the middle of the sequence of $s$-booleans representing the tape. For this to be possible, we introduce two new copies of the blank symbol, to be appended to the left end of the tape if the head moves left, and likewise for right. As an example, if the tape contents were initially
\[
\sigma_{-h} \ldots \sigma_{-1} \sigma_0 \sigma_1 \ldots \sigma_h
\]
then $\pRelStep$ would produce the sequence
\[
 \, \Box\, \Box \,\sigma_{-h} \dots \sigma_{-1} \sigma_0' \sigma_1 \dots \sigma_h
\]
if the Turing machine moved left, where $\sigma_0'$ is the newly written symbol. Note that the new symbol in the centre of the tape is $\sigma_{-1}$. If instead the Turing machine moved right, then $\pRelStep$ would produce the sequence
\[
 \sigma_{-h} \dots \sigma_{-1} \sigma_0' \sigma_1 \dots \sigma_h\, \Box\, \Box,
\]
thus causing $\sigma_1$ to become the new symbol in the centre of the tape.

\begin{lemma}
For $-h-1 \leq m \leq h+1$, there exists a proof ${^m}\pSymbol$ which computes the new symbol in position $m$, relative to the \textit{new} tape head position. \end{lemma}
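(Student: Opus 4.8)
The plan is to write down, explicitly, which symbol occupies new position $m$ after one step, and then assemble a proof computing that symbol out of the small proofs introduced in this section (${}^n\pEval$, $\pi\,\&\,\rho$, ${}_n\pBoolWeak$) together with an $s$‑ary analogue of Proposition \ref{encoding transition functions}. Write $\delta(\sigma_0,q)=(\sigma'_0,q',d)$ for the transition applied to the old head symbol $\sigma_0$ and old state $q$, recall $\text{left}=0$, $\text{right}=1$, and adopt the convention $\sigma_j=\Box$ (i.e.\ $0$) for $|j|>h$. Then the symbol in new position $m$ is
\[
\begin{cases}
\Box & \text{if } d=0 \text{ and } m\le -h,\ \text{ or } d=1 \text{ and } m\ge h,\\
\sigma'_0 & \text{if } (d,m)=(0,1) \text{ or } (d,m)=(1,-1),\\
\sigma_{m-1} & \text{if } d=0 \text{ and neither line above applies},\\
\sigma_{m+1} & \text{if } d=1 \text{ and neither line above applies}.
\end{cases}
\]
This is pure bookkeeping against the displayed tape sequences: moving left re‑indexes old position $k$ as new position $k+1$, overwrites old position $0$ by $\sigma'_0$, and introduces two fresh blanks on the left, so new position $m$ inherits $\sigma_{m-1}$ except that new position $1$ gets $\sigma'_0$ and the two leftmost new cells are blank; moving right is the mirror image. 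One checks that in the last two lines $\sigma_{m\pm1}$ is always a genuine symbol of the input window, so the only blanks produced are those of the first line. Accordingly ${^m}\pSymbol$ should be a proof of $(2h+1)\,\tSBool, \tNBool\vdash\tSBool$ whose $2h+1$ symbol premises hold $\sigma_{-h},\dots,\sigma_h$ and whose last premise holds $q$, with denotation the function above; wrapping it with one dereliction per hypothesis (Definition \ref{defn:plain}, all degrees $1$) makes it a plain proof, suitable as a component of $\pRelStep$.

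I would build the core proof in three moves, uniformly in $m$. (1) \emph{Direction and overwritten symbol.} Take a proof $\tSBool,\tNBool\vdash\tBool$ (or $\tSBool,\tNBool\vdash\tBool\otimes\tSBool$ when $m\in\{-1,1\}$) encoding $(\sigma,q)\mapsto\delta_2(\sigma,q)$ (resp.\ $(\delta_2(\sigma,q),\delta_0(\sigma,q))$); this is given by the construction of Proposition \ref{encoding transition functions}, which goes through verbatim with the alphabet $\{0,\dots,s-1\}$ in place of $\{0,1\}$. Following it by $\otimes L$ introduces into the context a boolean $d$, the symbol $\sigma'_0$ (when $m=\pm1$), and the $2h$ tape symbols $\sigma_j$ with $j\ne 0$, and uses the copies of $\sigma_0$ and $q$ once each. (2) \emph{Branch values.} After exposing the $A^s$ argument, the ``$d=0$'' value $v_0$ is ${}^s\pEval$ applied to whichever of $\sigma_{m-1}$ or $\sigma'_0$ the formula assigns to that branch, or the projection $A^s\vdash A$ (i.e.\ $\&L_0$) in the blank case; similarly $v_1$ for ``$d=1$''. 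Pad both, using ${}_s\pBoolWeak$ (the obvious $s$‑indexed variant of $\pBoolWeak$) and exchange, to a common context comprising all $\sigma_j$ ($j\ne 0$) and (when $m=\pm1$) $\sigma'_0$, with a shared $A^s$ slot. (3) \emph{Selection.} Combine $v_0,v_1$ by $\&R$, apply $\multimap L$ against $\tBool$ and an axiom $A\vdash A$ to expose a $\tBool$ premise, cut in the proof of $d$ from (1) — reintroducing $\sigma_0,q$ — and re‑apply $\multimap R$; after reordering the hypotheses by exchange one obtains the desired proof, whose denotation evaluates $v_d$ on the inputs, which is exactly the displayed formula.

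The only genuine obstacle is the positional case analysis above: keeping straight the $\pm1$ shift of coordinates together with the two‑cell growth of the window, the overwrite of the old head cell, and in particular that it is $\sigma'_0$ (not $\sigma_0$) that surfaces at new positions $\pm1$, plus the degenerate subcases near the boundary, which for small $h$ collapse lines of the formula into one another but remain consistent with the displayed cases. The proof‑theoretic assembly, and the routine $s$‑ary generalisations of Lemma \ref{encoding functions of n bools} and Proposition \ref{encoding transition functions} (replace $2$ by $s$ throughout), present no difficulty.
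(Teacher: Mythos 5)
Your combinatorial description of which old symbol lands in new position $m$ is correct and matches the paper's semantics, and for $m\neq\pm1$ your assembly (select between the symbols at relative positions $m-1$ and $m+1$, or a hard-wired blank at the boundary, according to $\delta_2(\sigma_0,q)$, using ${^s}\pEval$, $\pBoolWeak$, $\&R$ and $\multimap L$) is essentially the paper's construction. The genuine gap is at $m=\pm1$. You propose to obtain the pair $(\delta_2(\sigma_0,q),\delta_0(\sigma_0,q))$ from a \emph{single} copy of the head symbol and state, via a claimed proof of $\tSBool,\tNBool\vdash\tBool\otimes\tSBool$ "given by the construction of Proposition \ref{encoding transition functions}". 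That proposition only gives separate proofs ${^2_\delta}\pTrans$ and ${^0_\delta}\pTrans$, each consuming its own copy of the inputs; it does not give the tensored version. Indeed no such proof exists for a general $\delta$: in the exponential-free fragment there is no contraction, a cut-free proof of a $\otimes$-conclusion must ultimately split the context between the two factors, and the left-over atomic resources from one factor cannot reconstruct a boolean output depending on both $\sigma_0$ and $q$ (compare: the diagonal $\tBool\vdash\tBool\otimes\tBool$ is not provable). Consequently your announced sequent $(2h+1)\,\tSBool,\tNBool\vdash\tSBool$ with "all degrees $1$" is under-resourced when $m=\pm1$. The paper's fix is to give ${^{\pm1}}\pSymbol$ the sequent $\tSBool,\tNBool,\tSBool,\tSBool,\tNBool\vdash\tSBool$ — one $(\tSBool,\tNBool)$ pair fed to ${^0_\delta}\pTrans$ for the written symbol and a second pair for the direction — the duplication being supplied later by contraction on the banged hypotheses when the components are assembled into $\pRelStep$; your degree vector must be adjusted accordingly.

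A secondary, non-fatal difference: the paper gives each ${^m}\pSymbol$ only the hypotheses it actually needs (the symbols at positions $m-1$, $m+1$ and $0$ plus the state, fewer at the boundary), whereas you feed in all $2h+1$ symbols and discard the irrelevant ones additively with an $s$-ary $\pBoolWeak$. On genuine configurations (group-like inputs) this computes the same symbol, so it satisfies the letter of the lemma, but additive discarding inserts extra scalar factors $\bigl(\sum_i a_i^j\bigr)$ for every discarded position $j$ when evaluated on superpositions, so the resulting linear map differs from the paper's and would change the polynomials recorded in Remark \ref{remark:denotation_relstep} as well as the contraction count in the proposition that follows. If you intend your ${^m}\pSymbol$ to be the components of the paper's $\pRelStep$, you should trim the context to the paper's form.
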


\begin{proof}
In order to compute the symbol in relative position $m$, the proof ${^m}\pSymbol$ will require a copy of the symbol in position $m-1$ to be used if we move left, and the symbol in position $m+1$ if we move right. We will also need to provide each proof ${^m}\pSymbol$ with a copy of the state and the currently scanned symbol, in order to compute the direction to direction to move. Define $\pi$ as the following proof.
\begin{center}
\AxiomC{$\proofvdots{{_\delta^2}\pTrans}$}
\noLine\UnaryInfC{${\tSBool}, {\tNBool} \vdash \tBool$}

\AxiomC{$\proofvdots{\pBoolWeak({^s}\pEval, 1)}$}
\noLine\UnaryInfC{${\tSBool}, {\tSBool}, A^s \vdash A$}

\AxiomC{$\proofvdots{\pBoolWeak({^s}\pEval, 1)}$}
\noLine\UnaryInfC{${\tSBool}, {\tSBool}, A^s \vdash A$}
\RightLabel{\scriptsize exch}
\UnaryInfC{${\tSBool}, {\tSBool}, A^s \vdash A$}

\RightLabel{\scriptsize $\& R$}
\BinaryInfC{${\tSBool}, {\tSBool}, A^s \vdash A^2$}
\AxiomC{}
\UnaryInfC{$A \vdash A$}
\RightLabel{\scriptsize $\multimap L$}
\BinaryInfC{${\tSBool}, {\tSBool}, \tBool, A^s \vdash A$}
\RightLabel{\scriptsize $\multimap R$}
\UnaryInfC{${\tSBool}, {\tSBool}, \tBool \vdash \tSBool$}
\RightLabel{\scriptsize cut}
\BinaryInfC{${\tSBool}, {\tSBool}, \tSBool, \tNBool \vdash \tSBool$}
\DisplayProof
\end{center}
If $\alpha, \beta, \sigma \in \Sigma$ and $q \in Q$ then we have
\[
\dntn{\pi}(\alpha, \beta, \sigma, q) = \begin{cases} \alpha & \delta_2(\sigma, q) = 0\; (\text{left}) \\ \beta & \delta_2(\sigma, q) = 1\; (\text{right}).\end{cases}
\]
For the symbols not near the tape head ($m \neq \pm 1$) or the ends of the tape ($m \neq \pm h, \pm(h+1)$), we can simply define ${^m}\pSymbol = \pi$, with the understanding that the inputs to ${^m}\pSymbol$ will be the $s$-booleans corresponding to relative position $m-1, m+1$ and $0$ respectively. For $m = \pm h, \pm (h+1)$, we define ${^m}\pSymbol$ as the cut of the blank symbol $\underline{0}$ against $\pi$, in order to introduce new blank symbols at the ends of the tape. This produces a proof of $\tSBool, \tSBool, \tNBool \vdash \tSBool$. Finally for $m = \pm 1$, we must also compute a copy of the new symbol, which can be achieved by cutting ${_\delta^0}\pTrans$ against $\pi$, producing a proof of $\tSBool, \tNBool, \tSBool, \tSBool, \tNBool \vdash \tSBool$.
\end{proof}

\begin{proposition}
For $h \geq 0$, there is a component-wise plain proof
\[
{_h}\pRelStep : {!} \tSBool^{\otimes 2h+1} \otimes {!}\tNBool \vdash {!} \tSBool^{\otimes 2h+3} \otimes {!}\tNBool
\]
which encodes a single transition step of a given Turing machine, using relative tape coordinates.
\end{proposition}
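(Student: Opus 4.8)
The plan is to build ${_h}\pRelStep$ directly as a component-wise plain proof in the sense of Definition~\ref{defn:plain_comp}, with $2h+3$ components computing the new tape symbols and one component computing the new state. Concretely, take the common context to be $\Gamma = {!}\tSBool^{\otimes 2h+1}, {!}\tNBool$ (so $r = 2h+2$), where the $i$-th copy of ${!}\tSBool$ for $-h \le i \le h$ carries the symbol originally in relative position $i$, and the last hypothesis carries the state $q$. The conclusion ${!}\tSBool^{\otimes 2h+3} \otimes {!}\tNBool$ then has $2h+4$ tensor factors, indexed by the new relative positions $m$ with $-h-1 \le m \le h+1$ together with the state slot.

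First I would assemble, for each new relative position $m$, a plain proof $\psi_m : \Gamma \vdash \tSBool$ out of the proof ${^m}\pSymbol$ constructed in the previous lemma. Since ${^m}\pSymbol$ contains no exponential hypotheses, prefixing a dereliction to each of its hypotheses yields a plain proof in the sense of Definition~\ref{defn:plain} with degree vector $(1,\dots,1)$. For generic $m$ (i.e.\ $m \neq \pm 1, \pm h, \pm(h+1)$) the hypotheses of ${^m}\pSymbol$ are three distinct tape symbols (relative positions $m-1$, $m+1$, $0$) and the state, so one simply weakens in the remaining $2h-2$ copies of ${!}\tSBool$, which preserves plainness by Remark~\ref{remark:stable_plain}. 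For the edge positions $m = \pm h, \pm(h+1)$ the proof ${^m}\pSymbol$ already has the blank $\underline 0$ cut in and uses only two tape symbols plus the state, so again we weaken. For $m = \pm 1$ the proof ${^m}\pSymbol$ uses the scanned symbol (relative position $0$) and the state twice each --- once inside ${^0_\delta}\pTrans$ to produce $\sigma_0'$ and once inside $\pi$ to select the direction --- so after dereliction I first contract the two ${!}\tSBool$ occurrences of relative position $0$ and the two ${!}\tNBool$ occurrences of the state (contraction preserves plainness, Remark~\ref{remark:stable_plain}) and then weaken in the remaining tape symbols; an exchange puts the context into the fixed order of $\Gamma$. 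In every case $\psi_m$ is plain. Analogously, let $\psi_{\mathrm{state}} : \Gamma \vdash \tNBool$ be obtained from the proof ${^1_\delta}\pTrans$ of Proposition~\ref{encoding transition functions} (with tape symbols encoded as $s$-booleans, so of type $\tSBool, \tNBool \vdash \tNBool$) by dereliction on its two hypotheses --- identifying the $\tSBool$ with relative position $0$ and the $\tNBool$ with the state --- followed by weakening in the other $2h$ copies of ${!}\tSBool$; this too is plain.

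Now define ${_h}\pRelStep$ exactly as in Definition~\ref{defn:plain_comp}: tensor the promotions of the $2h+4$ components $\psi_{-h-1},\dots,\psi_{h+1},\psi_{\mathrm{state}}$ to obtain a proof of $(2h+4)$ copies of each hypothesis of $\Gamma$ proving ${!}\tSBool^{\otimes 2h+3} \otimes {!}\tNBool$, and then contract each family of $2h+4$ copies back down to a single hypothesis. By construction the result is component-wise plain with the claimed conclusion. For correctness, recall from the remark following Definition~\ref{defn:plain_comp} that the denotation of a component-wise plain proof is the morphism of coalgebras into the Cartesian product (Section~\ref{section:coalg_prod}) induced by the denotations of its components; hence $\dntn{{_h}\pRelStep}$ sends $\vacu$ of a tuple of $s$-booleans and a state to the tensor of the vacuum vectors at the values $\dntn{\psi_m}$ and $\dntn{\psi_{\mathrm{state}}}$. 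The previous lemma shows $\psi_m$ outputs the symbol in new relative position $m$ (reading the correct neighbour of the old tape according to whether $\delta_2(\sigma_0,q)$ says left or right, inserting the new symbol $\sigma_0'$ at $m = \pm 1$ and fresh blanks at the ends), and ${^1_\delta}\pTrans$ outputs the new state; this is exactly the relative-coordinate update described above the lemma.

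The main obstacle is purely organisational: checking that the $2h+3$ choices of ${^m}\pSymbol$, together with the two contractions in each of the $m = \pm 1$ components, genuinely realise the left/right shift of the tape window uniformly --- equivalently, that the internal case split of $\dntn{\pi}$ composes correctly across all positions and at the two boundaries --- and that after the weakenings every component really lives over the single common context $\Gamma$, so that Definition~\ref{defn:plain_comp} applies verbatim. None of this requires new ideas beyond the plainness-stability of Remark~\ref{remark:stable_plain}.
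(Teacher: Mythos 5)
Your proposal is correct and is essentially the paper's own argument: the paper likewise derelicts the hypotheses of the proofs ${^m}\pSymbol$ (for $-h-1 \le m \le h+1$) and of ${^1_\delta}\pTrans$, promotes, tensors these together, and performs the contractions, yielding a component-wise plain proof whose components are exactly your $\psi_m$ and $\psi_{\mathrm{state}}$. Your extra bookkeeping (weakenings to place every component over the common context, and the contractions of the duplicated scanned symbol and state in the $m=\pm 1$ components) just makes explicit the structural-rule details the paper leaves implicit, and matches its count of copies ($2h+6$ of the scanned symbol and state, two of every other symbol).
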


\begin{proof}
For $-h-1 \le m \le h + 1$ we derelict the hypotheses of ${^m}\pSymbol$ and then promote the result, do the same with ${_\delta^1}\pTrans$, and then we tensor all these promoted proofs together, and perform contractions. In total, it is necessary to create two copies of each $\tSBool$ other than the current symbol, along with $2k+6$ copies of the current symbol and state. These copies are used to produce two copies of the new symbol, one copy of the new state, and $2k+3$ copies of the direction to move; one for each of the proofs ${^m}\pSymbol$.
\end{proof}

\begin{remark}\label{remark:denotation_relstep} We now compute the polynomials associated to ${_h} \pRelStep$:
\begin{align*}
\alpha^m = \sum_{i \in \Sigma} a_{i}^m \dntn{\underline{i}}, \qquad \beta = \sum_{q \in Q} b_q \dntn{\underline{q}}.
\end{align*}
We compute the value of $\dntn{{_h} \pRelStep}$ on $\vacu_{\alpha^{-h}} \otimes ... \otimes \vacu_{\alpha^{h}} \otimes \vacu_{\beta}$. The image of this element is a tensor of the form
\[
\vacu_{\theta^{-h-1}} \otimes ... \otimes \vacu_{\theta^{h+1}} \otimes \vacu_{\mu}
\]
where the individual vectors $\theta^m, \mu$ are described as follows. Let $(\hat{\sigma}_i^q, \hat{q}_i^q, \hat{d}_i^q) = \delta(i,q)$. For $m \neq \pm 1, \pm h, \pm(h+1)$ the vector $\theta^m$ is given by
\begin{align*}
\dntn{{^m}\pSymbol}&(\alpha^{m-1} \otimes \alpha^{m+1} \otimes \alpha^0 \otimes \beta) 
\\&= \sum_{i \in \Sigma} \sum_{j \in \Sigma} \sum_{k \in \Sigma} \sum_{q \in Q} a_{i}^{m-1} a_{j}^{m+1} a_{k}^{0} b_q (\delta_{\hat{d}_k^q = 0}\dntn{\underline{i}} + \delta_{\hat{d}_k^q = 1}\dntn{\underline{j}})
\\&= \left(\sum_{k \in \Sigma} \sum_{q \in Q} a_{k}^{0} b_q \delta_{\hat{d}_k^q = 0}\right)\left(\sum_{i \in \Sigma}a_{i}^{m-1}\dntn{\underline{i}}\right) \left(\sum_{j \in \Sigma} a_{j}^{m+1}\right) 
\\&\qquad+ \left(\sum_{k \in \Sigma} \sum_{q \in Q} a_{k}^{0} b_q \delta_{\hat{d}_k^q = 1}\right)\left(\sum_{i \in \Sigma}a_{i}^{m-1}\right) \left(\sum_{j \in \Sigma} a_{j}^{m+1}\dntn{\underline{j}}\right).
\end{align*}
For $m = h, h+1$ the vector $\theta^m$ is given by
\begin{align*}
\dntn{{^m}\pSymbol}&(\alpha^{m-1} \otimes \alpha^0 \otimes \beta) 
\\&= \sum_{i \in \Sigma} \sum_{k \in \Sigma} \sum_{q \in Q} a_{i}^{m-1} a_{k}^{0} b_q (\delta_{\hat{d}_k^q = 0}\dntn{\underline{i}} + \delta_{\hat{d}_k^q = 1}\dntn{\underline{0}})
\\&= \left(\sum_{k \in \Sigma} \sum_{q \in Q} a_{k}^{0} b_q \delta_{\hat{d}_k^q = 0}\right)\left(\sum_{i \in \Sigma}a_{i}^{m-1}\dntn{\underline{i}}\right)
\\&\qquad+ \left(\sum_{k \in \Sigma} \sum_{q \in Q} a_{k}^{0} b_q \delta_{\hat{d}_k^q = 1}\right)\left(\sum_{i \in \Sigma}a_{i}^{m-1}\right)\dntn{\underline{0}}.
\end{align*}
For $m = -h, -h-1$ the vector $\theta^m$ is given by
\begin{align*}
\dntn{{^m}\pSymbol}&(\alpha^{m+1} \otimes \alpha^0 \otimes \beta) 
\\&= \sum_{i \in \Sigma} \sum_{k \in \Sigma} \sum_{q \in Q} a_{i}^{m+1} a_{k}^{0} b_q (\delta_{\hat{d}_k^q = 0}\dntn{\underline{0}} + \delta_{\hat{d}_k^q = 1}\dntn{\underline{i}})
\\&= \left(\sum_{k \in \Sigma} \sum_{q \in Q} a_{k}^{0} b_q \delta_{\hat{d}_k^q = 0}\right)\left(\sum_{i \in \Sigma}a_{i}^{m+1}\right)\dntn{\underline{0}}
\\&\qquad+ \left(\sum_{k \in \Sigma} \sum_{q \in Q} a_{k}^{0} b_q \delta_{\hat{d}_k^q = 1}\right)\left(\sum_{i \in \Sigma}a_{i}^{m+1}\dntn{\underline{i}}\right).
\end{align*}
Finally $\theta^1$ is
\begin{align*}
\dntn{{^1}\pSymbol}&(\alpha^{0} \otimes \beta \otimes \alpha^{2} \otimes \alpha^0  \otimes \beta) 
\\&= \sum_{i \in \Sigma} \sum_{p \in Q} \sum_{j \in \Sigma} \sum_{k \in \Sigma}  \sum_{q \in Q} a_{i}^{0} b_p a_{j}^{2} a_{k}^{0} b_q (\delta_{\hat{d}_k^q = 0}\dntn{\underline{\hat{\sigma}_i^p}} + \delta_{\hat{d}_k^q = 1}\dntn{\underline{j}})
\\&= \left(\sum_{k \in \Sigma} \sum_{q \in Q} a_{k}^{0} b_q \delta_{\hat{d}_k^q = 0}\right)\left(\sum_{i \in \Sigma} \sum_{p \in Q} a_{i}^{0} b_p\dntn{\underline{\hat{\sigma}_i^p}}\right)\left(\sum_{j \in \Sigma} a_{j}^{2}\right)
\\&\qquad+ \left(\sum_{k \in \Sigma} \sum_{q \in Q} a_{k}^{0} b_q \delta_{\hat{d}_k^q = 1}\right)\left(\sum_{i \in \Sigma} a_{i}^{0} \right)\left(\sum_{p \in Q} b_p\right)\left(\sum_{j \in \Sigma} a_{j}^{2}\dntn{\underline{j}}\right).
\end{align*}
and $\theta^{-1}$ is
\begin{align*}
\dntn{{^{-1}}\pSymbol}&(\alpha^{-2} \otimes \alpha^{0} \otimes \beta \otimes \alpha^0  \otimes \beta) 
\\&= \sum_{i \in \Sigma} \sum_{j \in \Sigma} \sum_{p \in Q} \sum_{k \in \Sigma}  \sum_{q \in Q} a_{i}^{-2} a_{j}^{0} b_p a_{k}^{0} b_q (\delta_{\hat{d}_k^q = 0}\dntn{\underline{i}} + \delta_{\hat{d}_k^q = 1}\dntn{\underline{\hat{\sigma}_j^p}})
\\&= \left(\sum_{k \in \Sigma} \sum_{q \in Q} a_{k}^{0} b_q \delta_{\hat{d}_k^q = 0}\right)\left(\sum_{i \in \Sigma} a_{i}^{-2}\dntn{\underline{i}}\right)\left(\sum_{j \in \Sigma} a_{j}^{0} \right)\left(\sum_{p \in Q} b_p\right)
\\&\qquad+ \left(\sum_{k \in \Sigma} \sum_{q \in Q} a_{k}^{0} b_q \delta_{\hat{d}_k^q = 1}\right)\left(\sum_{i \in \Sigma} a_{i}^{-2}\right)\left(\sum_{j \in \Sigma} \sum_{p \in Q} a_{j}^{0} b_p\dntn{\underline{\hat{\sigma}_j^p}}\right).
\end{align*}
The vector $\mu$ is computed by
\begin{align*}
\dntn{{_\delta^1}\pTrans}&(\alpha^{0} \otimes \beta) = \sum_{k \in \Sigma} \sum_{q \in Q} a_{k}^{0} b_q \dntn{\underline{\hat{q}_k^q}}.
\end{align*}
\end{remark}

\begin{remark}
We have now given two encodings of the step function of a Turing machine which use booleans to encode the contents of the tape squares, and which index these tape squares relative to the head, namely $\pBoolstep$ (in Section \ref{section:actionofstep}) and $\pRelStep$. These are not proofs of the same sequents but there is a natural way to compare them when $s = 2$, and the result of this comparison is negative: the encodings are genuinely different.

Recall that ${^p}\pBoolstep_A^{a,b,c,d}$ is a proof of
\be
a \,{!} \tBool_B, b\, {!} \tBool_B, {!} {_n} \tBool_B \vdash {!} \tBool_A^{\otimes c } \otimes {!} \tBool_A^{\otimes d} \otimes {!} {_n} \tBool_A
\ee
where $B = A^{3^{p+e+1}}$ with $e = \max\{c, d\}$. In general, the diagram
\[
\xymatrix@C+7pc@R+2pc{
{!} \dntn{\tBool_B}^{\otimes 2h+1} \otimes {!} \dntn{{}_n \tBool_B} \ar[d]^-{{!}\dntn{\pBooltype}^{\otimes 2h+1} \otimes {!}\dntn{{}_n \pBooltype}}\ar[r]^-{\dntn{{^1}\pBoolStep^{h+1,h,h+2,h+1}}} & {!} \dntn{\tBool_A}^{\otimes 2h+3} \otimes {!} \dntn{{}_n \tBool_A}\\
{!} \dntn{\tBool_A}^{\otimes 2h+1} \otimes {!} \dntn{{}_n \tBool_A} \ar[r]_-{ \dntn{{_h}\pRelStep}} & {!} \dntn{ \tBool_A }^{\otimes 2h + 3} \otimes {!} \dntn{{}_n \tBool_A} \ar[u]_-{=}
}
\]
\emph{does not} commute. However, the two encodings do give rise to the same naive probabilistic extension of the step function; see \cite[Appendix D]{clift_murfet3}.
\end{remark}

\subsection{Absolute step}\label{section:absolutestep}

Suppose we are confined to a region of the tape which is $h$ symbols long, encoded by a sequence of $\tSBool$'s which appear in the same order as they occur on the tape. The goal of this section is to construct a component-wise plain proof 
\[
{_h}\pAbsStep : {!} \tSBool^{\otimes h} \otimes {!}\tNBool \otimes {!}\tHBool \vdash {!} \tSBool^{\otimes h} \otimes {!}\tNBool \otimes {!}\tHBool
\]
which encodes a single step transition of a Turing machine, where the purpose of the $h$-boolean is to keep track of the head position, and thus it decrements if we move left, and increments if we move right. The valid positions are $\{0,\ldots,h-1\}$ which we identify with $\mathbb{Z}/h\mathbb{Z}$. All positions are read modulo $h$, and so if the head is currently in position $h-1$ and it moves right, it moves to position $0$, and similarly if the head is at position $0$ and moves left, it moves to position $h-1$.

\begin{lemma}
For $0 \leq m \leq h-1$, there exists a proof 
\[
{^m}\pSymbol : \tSBool, \tNBool, \tHBool \vdash \tSBool
\]
which encodes the function
\[
(\sigma, q, i) \mapsto \begin{cases}\sigma & i \neq m \\ \delta_0(\sigma, q) & i = m.\end{cases}
\]
\end{lemma}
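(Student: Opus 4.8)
The plan is to build ${^m}\pSymbol$ by a single application of $\&R$ that case-splits on the head position, followed by an evaluation of the resulting $h$-boolean against the input $\tHBool$. The point is that all $h$ premises of the $\&R$ must have the same context, so each will be a proof of $\tSBool, \tNBool, A^s \vdash A$ (the $A^s$ being the slots left over after a final $\multimap R$ producing the output $s$-boolean): in the $m$-th premise the state is genuinely used to apply $\delta_0$, while in the other $h-1$ premises the state must be discarded. Since we are not allowed exponentials here, the discarding is done with ${_n}\pBoolWeak$, exactly as in the relative case.

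Concretely, I would first fix the two building blocks. Let $\nu$ be the proof of $\tSBool, \tNBool, A^s \vdash A$ obtained from ${_n}\pBoolWeak({^s}\pEval, 1)$ by an exchange, so that $\dntn{\nu}(\psi \otimes \varphi)(a_0, \ldots, a_{s-1}) = \psi(a_0, \ldots, a_{s-1})$, and in particular $\dntn{\nu}(\dntn{\underline{\sigma}} \otimes \dntn{\underline{q}}) = \proj_\sigma$ regardless of $q$. Let $\nu_m$ be the proof of $\tSBool, \tNBool, A^s \vdash A$ obtained by cutting the proof ${^0_\delta}\pTrans$ of $\tSBool, \tNBool \vdash \tSBool$ (the evident version of Proposition \ref{encoding transition functions} for the alphabet $\Sigma = \{0,\ldots,s-1\}$, already used in Section \ref{section:relstepdefn}) against ${^s}\pEval$ along $\tSBool$, so that $\dntn{\nu_m}(\dntn{\underline{\sigma}} \otimes \dntn{\underline{q}})(a_0,\ldots,a_{s-1}) = \dntn{{^0_\delta}\pTrans}(\dntn{\underline\sigma} \otimes \dntn{\underline q})(a_0,\ldots,a_{s-1}) = a_{\delta_0(\sigma,q)}$. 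Now define ${^m}\pSymbol$ by taking the $h$ premises consisting of $\nu$ at every position $k \neq m$ and $\nu_m$ at position $m$, applying $\&R$ to get $\tSBool, \tNBool, A^s \vdash A^h$, then $\multimap L$ against $\tHBool = A^h \multimap A$ with an axiom $A \vdash A$ to get $\tSBool, \tNBool, \tHBool, A^s \vdash A$, and finally $\multimap R$ to get $\tSBool, \tNBool, \tHBool \vdash \tSBool$.

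To check correctness one just evaluates: feeding $\dntn{\underline\sigma}, \dntn{\underline q}, \dntn{\underline i}$ together with a tuple $(a_0,\ldots,a_{s-1})$, the $\&R$/$\multimap L$ block computes $\dntn{\underline i}(P_0,\ldots,P_{h-1})$ where $P_k = a_\sigma$ for $k \neq m$ and $P_m = a_{\delta_0(\sigma,q)}$; since $\dntn{\underline i} = \proj_i$ this is $a_\sigma$ when $i \neq m$ and $a_{\delta_0(\sigma,q)}$ when $i = m$. Closing with $\multimap R$, the output $s$-boolean is $\proj_\sigma = \dntn{\underline\sigma}$ when $i \neq m$ and $\proj_{\delta_0(\sigma,q)} = \dntn{\underline{\delta_0(\sigma,q)}}$ when $i = m$, which is the claimed function. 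The only step needing any care is the bookkeeping that forces all $h$ premises of the $\&R$ to have identical contexts — hence the use of ${_n}\pBoolWeak$ to absorb the unused state in the pass-through branches rather than a weakening, which would require ${!}\tNBool$; past that the construction is routine and presents no real obstacle.
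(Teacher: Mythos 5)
Your construction is correct and is essentially the paper's own proof: the paper likewise forms the $h$ premises $\tSBool, \tNBool, A^s \vdash A$ as ${_n}\pBoolWeak({^s}\pEval,1)$ in every position except $m$, places ${^0_\delta}\pTrans$ in position $m$, and closes with $\&R$, $\multimap L$ against $\tHBool$, and a final $\multimap R$. The only difference is cosmetic: you realise the $m$-th branch by cutting ${^0_\delta}\pTrans$ against ${^s}\pEval$, whereas the paper inserts ${^0_\delta}\pTrans$ (with its final $\multimap R$ suppressed) directly, and these agree up to cut-elimination.
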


\begin{proof}

Let $\pi : \tSBool, \tNBool, A^s \vdash A$ be the proof ${_n}\pBoolWeak({^s}\pEval, 1)$. Then ${^m}\pSymbol$ is the following proof.
\begin{center}
\AxiomC{$\proofvdots{\pi^{m} \,\&\, {_\delta^0}\pTrans\,\&\, \pi^{h-m-1}}$}
\noLine\UnaryInfC{$\tSBool, \tNBool, A^s \vdash A^h$}
\AxiomC{}
\UnaryInfC{$A \vdash A$}
\RightLabel{\scriptsize $\multimap L$}
\BinaryInfC{$\tSBool, \tNBool, \tHBool, A^s \vdash A$}
\RightLabel{\scriptsize $\multimap R$}
\UnaryInfC{$\tSBool, \tNBool, \tHBool \vdash \tSBool$}
\DisplayProof
\end{center}
\end{proof}

\begin{lemma} \label{lemma: state for cstep3}
There exists a proof
\[
\pState : h\, \tSBool, \tNBool, \tHBool \vdash \tNBool
\]
which encodes the function
\[
(\sigma_0, ..., \sigma_{h-1}, q, i) \mapsto \delta_1(\sigma_i, q).
\]
\end{lemma}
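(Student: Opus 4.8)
The plan is to factor the construction as ``first read off the currently scanned symbol, then apply ${_\delta^1}\pTrans$''. First I will build an auxiliary proof $\pRead$ of $h\,\tSBool, \tHBool \vdash \tSBool$ encoding $(\sigma_0,\dots,\sigma_{h-1},i) \mapsto \sigma_i$, and then obtain $\pState$ by cutting the transition proof ${_\delta^1}\pTrans : \tSBool, \tNBool \vdash \tNBool$ against $\pRead$ along the $\tSBool$, followed by an exchange to place the hypotheses in the stated order. Composing denotations then finishes the job: $\dntn{\pRead}$ (computed in the next step) sends $\underline{\sigma_0}\otimes\cdots\otimes\underline{\sigma_{h-1}}\otimes\underline{i}$ to $\underline{\sigma_i}$, and $\dntn{{_\delta^1}\pTrans}$ sends $\underline{\sigma}\otimes\underline{q}$ to $\underline{\delta_1(\sigma,q)}$, so $\dntn{\pState}$ sends $\underline{\sigma_0}\otimes\cdots\otimes\underline{\sigma_{h-1}}\otimes\underline{q}\otimes\underline{i}$ to $\underline{\delta_1(\sigma_i,q)}$, which is the claimed function.

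To construct $\pRead$, apply $\multimap R$, so that it suffices to prove $h\,\tSBool, \tHBool, A^s \vdash A$ (recall $\tSBool = A^s \multimap A$), and then consume the $h$-boolean by a $\multimap L$ against the axiom $A \vdash A$; this reduces the task to a proof of $h\,\tSBool, A^s \vdash A^h$, which I assemble by $h-1$ applications of $\& R$. The $m$th branch is a proof $\kappa_m$ of $h\,\tSBool, A^s \vdash A$ which feeds the $m$th $s$-boolean the slots $A^s$ and discards the other $h-1$ copies: concretely $\kappa_m$ is built from ${^s}\pEval : \tSBool, A^s \vdash A$ by weakening in the remaining copies of $\tSBool$ using ${_s}\pBoolWeak$ and rearranging by exchange so that every $\kappa_m$ carries the syntactically identical sequent $h\,\tSBool, A^s \vdash A$, with the live $\tSBool$-occurrence in position $m$. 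Since $\& R$ shares its context, the single copy of $A^s$ is used additively across the $h$ branches, and on boolean inputs ${_s}\pBoolWeak$ acts as an effective weakening (feeding a constant tuple to a projection returns that entry); hence the resulting proof of $h\,\tSBool, A^s \vdash A^h$ denotes the map sending $(\underline{\sigma_0},\dots,\underline{\sigma_{h-1}})$ and an $s$-tuple $(a_0,\dots,a_{s-1})$ to $(a_{\sigma_0},\dots,a_{\sigma_{h-1}})$, and applying $\underline{i} = \proj_i$ through the $\multimap L$ above and then $\multimap R$ gives $\dntn{\pRead}(\underline{\sigma_0}\otimes\cdots\otimes\underline{\sigma_{h-1}}\otimes\underline{i}) = \proj_{\sigma_i} = \underline{\sigma_i}$, as needed.

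There is no genuine mathematical difficulty here; the point requiring care is the linearity bookkeeping inside $\pRead$. Ordinary weakening is unavailable without exponentials, so the unwanted copies of $\tSBool$ in each $\kappa_m$ must be discarded via ${_s}\pBoolWeak$, and exchange rules must be used to route the correct $\tSBool$-occurrence into the $m$th branch while keeping all $h$ sequents literally identical, which is what lets $\& R$ apply. This is precisely the device already used for ${^m}\pSymbol$ in the preceding lemma, so I expect it to go through routinely.
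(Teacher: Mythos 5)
Your construction is correct, and it succeeds for the same underlying reason as the paper's proof: an $h$-fold $\& R$ whose branches share the context, with the unwanted $s$-booleans discarded by ${_s}\pBoolWeak$ and the $h$-boolean driving the selection through a $\multimap L$ against an axiom. The difference is in the factorization. The paper inlines ${^1_\delta}\pTrans$ into each branch, taking $\pi_i = {_s}\pBoolWeak({^1_\delta}\pTrans, h-1)$ with the live $\tSBool$ in position $i$, so the additive selection ranges over ``the new state assuming the head is at position $i$''; no cut is needed, and the construction is uniform with the other two components (${^m}\pSymbol$ and $\pTapehead$), which makes the copy-counting in the assembly of ${_h}\pAbsStep$ immediate. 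You instead select first and transition afterwards: your $\pRead$ of $h\,\tSBool, \tHBool \vdash \tSBool$ is exactly the paper's construction with ${^s}\pEval$ substituted for ${^1_\delta}\pTrans$ (each branch being ${_s}\pBoolWeak({^s}\pEval, h-1)$, the same proof used for ${^m}\pSymbol$ in the absolute encoding), and $\pState$ is then obtained by a single cut against ${^1_\delta}\pTrans$ plus an exchange. What your route buys is a reusable ``read the symbol under the head'' proof, in the spirit of $\pRead^j$ and $\pMultread$ from the Girard encoding, together with a one-line denotation computation by composition; the cost is the extra cut and intermediate object of type $\tSBool$, which the paper avoids. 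Both versions, of course, only encode the stated function on projection (boolean) inputs, and the cut does not affect plainness of the component, so your proof slots into the subsequent construction of ${_h}\pAbsStep$ without change.
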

\begin{proof}
For $0 \leq i \leq h-1$, define $\pi_i$ be the proof  as the proof \[{_s}\pBoolWeak({^1_\delta}\pTrans, h-1) : h\, \tSBool, \tNBool, A^n \vdash A,\] where the original $\tSBool$ from the proof ${^1_\delta}\pTrans$ is in the $i$th position on the left of the turnstile in $\pi_i$. Then $\pState$ is the following proof.
\begin{center}
\AxiomC{$\proofvdots{\pi_0 \,\&\, \dots \,\&\, \pi_{h-1}}$}
\noLine\UnaryInfC{$h\,  \tSBool, \tNBool, A^n \vdash A^h$}
\AxiomC{}
\UnaryInfC{$A \vdash A$}
\RightLabel{\scriptsize $\multimap L$}
\BinaryInfC{$h\, \tSBool, \tNBool, \tHBool, A^n \vdash A$}
\RightLabel{\scriptsize $\multimap R$}
\UnaryInfC{$h\, \tSBool, \tNBool, \tHBool \vdash \tNBool$}
\DisplayProof
\end{center}
\end{proof}

\begin{lemma}
There exists a proof
\[
\pTapehead : h\, \tSBool, \tNBool, \tHBool, \tHBool \vdash \tHBool
\]
which encodes the function
\[
(\sigma_0, ..., \sigma_{h-1}, q, i, j) \mapsto \begin{cases} i-1 & \delta_2(\sigma_j, q) = \text{left} \\ i+1 & \delta_2(\sigma_j, q) = \text{right}.\end{cases}
\]
\end{lemma}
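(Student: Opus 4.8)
The plan is to mimic the proofs of the two preceding lemmas. For each possible value of the reading head position $j$ we compute the new value of the updating head position $i$, bundle these $h$ proofs together with $\&R$, and then use the $\tHBool$ slot corresponding to $j$ to select among them; the $\tHBool$ slot corresponding to $i$ is used, together with a cyclic reindexing, to turn the data ``$i\mapsto i-1$ or $i\mapsto i+1$'' into an honest $h$-boolean. The one feature not present in Lemma~\ref{lemma: state for cstep3} is that two $\tHBool$ inputs are in play at once, but this causes no difficulty because the premises of an $\&R$ share their context.

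Concretely, first one builds for each $k$ with $0\le k\le h-1$ a proof $\rho_k$ of $A^h\vdash A^2$ with denotation $x\mapsto(\proj_{k-1}x,\proj_{k+1}x)$, the indices read modulo $h$; this is just $\&R$ applied to the two axioms $A\vdash A$ after $\&L_{k-1}$ and $\&L_{k+1}$. Next, running exactly the construction of $\pState$ in Lemma~\ref{lemma: state for cstep3} but with ${^2_\delta}\pTrans:\tSBool,\tNBool\vdash\tBool$ in place of ${^1_\delta}\pTrans$, and stopping one step short of the final $\multimap R$, produces a proof $\beta$ of $h\,\tSBool,\tNBool,\tHBool,A^2\vdash A$ whose denotation sends $(\sigma_0,\dots,\sigma_{h-1},q,j,(y_0,y_1))$ to $y_{\delta_2(\sigma_j,q)}$. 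Cutting $\rho_k$ into $\beta$ along the formula $A^2$ (and exchanging) gives a proof $\mu_k$ of $h\,\tSBool,\tNBool,\tHBool,A^h\vdash A$ sending $(\sigma_0,\dots,\sigma_{h-1},q,j,x)$ to $\proj_{k-1}x$ when $\delta_2(\sigma_j,q)=0$ and to $\proj_{k+1}x$ when $\delta_2(\sigma_j,q)=1$.

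One then forms $\mu_0\,\&\,\cdots\,\&\,\mu_{h-1}$, a proof of $h\,\tSBool,\tNBool,\tHBool,A^h\vdash A^h$ (legitimate since all the $\mu_k$ share a context), applies $\multimap L$ against the second $\tHBool$ input and an axiom $A\vdash A$ to reach $h\,\tSBool,\tNBool,\tHBool,A^h,\tHBool\vdash A$, and finally applies $\multimap R$ to the occurrence of $A^h$ (after exchanging it to the end of the context) to obtain $h\,\tSBool,\tNBool,\tHBool,\tHBool\vdash\tHBool$; a last exchange arranges the two $\tHBool$'s in the order demanded by the statement, with the updating position first. Tracing the denotation, the resulting $h$-boolean evaluated on $x=(x_0,\dots,x_{h-1})$ returns $x_{i-1}$ if $\delta_2(\sigma_j,q)=\text{left}$ and $x_{i+1}$ if $\delta_2(\sigma_j,q)=\text{right}$, i.e.\ it equals $\dntn{\underline{i-1}}$ or $\dntn{\underline{i+1}}$ modulo $h$, which is exactly the asserted function.

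No step presents a genuine obstacle. The one point deserving care is that the $\delta_2$-computation inside $\beta$ gets replicated across all $h$ summands $\mu_k$ of the $\&R$, and that the two $\tHBool$ inputs ultimately both feed a single $h$-boolean output; both are legitimate precisely because $\&R$ shares its context, exactly as in the proofs of Lemma~\ref{lemma: state for cstep3} and the preceding ${^m}\pSymbol$ lemma. The remainder is routine bookkeeping with exchange, cut, and the weakening proofs ${_s}\pBoolWeak$ introduced above.
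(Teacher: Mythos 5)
Your proposal is correct, and it reaches the paper's sequent by a mildly but genuinely different organization of the same ingredients. The paper first builds reindexing proofs $\tau_L,\tau_R:\tHBool, A^h\vdash A$ that consume the updating-position $\tHBool$ and implement $i\mapsto i-1$ and $i\mapsto i+1$ uniformly in $i$; then, for each reading position $j$, it cuts the direction proof ${_s}\pBoolWeak({^2_\delta}\pTrans,h-1)$ (followed by $\multimap R$) against $\tau_L\,\&\,\tau_R$ to get $\rho_j: h\,\tSBool,\tNBool,\tHBool,A^h\vdash A$, and the outermost $\&R$/$\multimap L$ dispatches on the \emph{reading}-position $\tHBool$. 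You do the dual: the cyclic shift is implemented pointwise by the fixed projection pairs $\rho_k:A^h\vdash A^2$ with denotation $x\mapsto(\proj_{k-1}x,\proj_{k+1}x)$, the reading-position $\tHBool$ is consumed inside your shared selector $\beta$ (the $\pState$ construction transplanted to ${^2_\delta}\pTrans$), and the outermost $\&R$/$\multimap L$ dispatches on the \emph{updating}-position $\tHBool$, with $\multimap R$ on $A^h$ and a final exchange giving the stated order of inputs. Tracing denotations as you do, each $\mu_k$ sends $(\sigma_0,\dots,\sigma_{h-1},q,j,x)$ to $x_{k-1}$ or $x_{k+1}$ according to $\delta_2(\sigma_j,q)$, so the final $h$-boolean is $\dntn{\underline{i-1}}$ or $\dntn{\underline{i+1}}$ modulo $h$, exactly as required; and since all the case splits are additive, the shared contexts you rely on are unproblematic. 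The only cost of your nesting order is syntactic: each of the $h$ branches $\mu_k$ contains a full copy of $\beta$, which itself already contains an $h$-fold $\&R$ of weakened copies of ${^2_\delta}\pTrans$, so your proof tree is roughly a factor of $h$ larger than the paper's, though this has no effect on the denotation or on the component-wise plainness of ${_h}\pAbsStep$.
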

\begin{proof}
Let $\tau_L: \tHBool, A^h \vdash A$ be the proof which converts the $h$-boolean $\underline{i}$ into the $h$-boolean $\underline{i-1}$ modulo $h$, defined as follows:
\begin{center}
\AxiomC{}
\UnaryInfC{$A \vdash A$}
\RightLabel{\scriptsize $\&L_{h-1}$}
\UnaryInfC{$A^h \vdash A$}
\AxiomC{}
\UnaryInfC{$A \vdash A$}
\RightLabel{\scriptsize $\&L_0$}
\UnaryInfC{$A^h \vdash A$}
\AxiomC{\dots}
\AxiomC{}
\UnaryInfC{$A \vdash A$}
\RightLabel{\scriptsize $\&L_{h-2}$}
\UnaryInfC{$A^h \vdash A$}
\RightLabel{\scriptsize $\&R$}
\QuaternaryInfC{$A^h \vdash A^h$}
\AxiomC{}
\UnaryInfC{$A \vdash A$}
\RightLabel{\scriptsize $\multimap L$}
\BinaryInfC{$\tHBool, A^h \vdash A$}
\DisplayProof
\end{center}
Similarly define a proof $\tau_R$ which encodes $\underline{i} \mapsto \underline{i+1}$ modulo $h$. 

Given $0 \leq i \leq h-1$, let $\rho_i$ be the following proof, where as in Lemma \ref{lemma: state for cstep3} the $i$ indicates which $\tSBool$ in the leftmost branch is the original copy from the proof ${^2}\pTrans$.
\begin{center}
\AxiomC{$\proofvdots{{_s}\pBoolWeak({^2_\delta}\pTrans, h-1)}$}
\noLine\UnaryInfC{$h\, \tSBool, \tNBool, A^2 \vdash A$}
\RightLabel{\scriptsize $\multimap R$}
\UnaryInfC{$h\, \tSBool, \tNBool \vdash \tBool$}
\AxiomC{$\proofvdots{\tau_L \,\&\, \tau_R}$}
\noLine\UnaryInfC{$\tHBool, A^h\vdash A^2$}
\AxiomC{}
\UnaryInfC{$A \vdash A$}
\RightLabel{\scriptsize $\multimap L$}
\BinaryInfC{$\tHBool, \tBool, A^h\vdash A$}
\RightLabel{\scriptsize cut}
\BinaryInfC{$h\, \tSBool, \tNBool, \tHBool, A^h \vdash A$}
\DisplayProof
\end{center}

Then $\pTapehead$ is:
\begin{center}
\AxiomC{$\proofvdots{\rho_0 \, \&\, \dots \, \&\, \rho_{h-1}}$}
\noLine\UnaryInfC{$h\,  \tSBool, \tNBool, \tHBool, A^h \vdash A^h$}
\AxiomC{}
\UnaryInfC{$A \vdash A$}
\RightLabel{\scriptsize $\multimap L$}
\BinaryInfC{$h\, \tSBool, \tNBool, \tHBool, \tHBool, A^h \vdash A$}
\RightLabel{\scriptsize $\multimap R$}
\UnaryInfC{$h\, \tSBool, \tNBool, \tHBool, \tHBool \vdash \tHBool$}
\DisplayProof
\end{center}
\end{proof}

\begin{proposition}
There is a component-wise plain proof
\[
{_h}\pAbsStep : {!} \tSBool^{\otimes h} \otimes {!}\tNBool \otimes {!}\tHBool \vdash {!} \tSBool^{\otimes h} \otimes {!}\tNBool \otimes {!}\tHBool
\]
which encodes a single transition step of a given Turing machine, using absolute tape coordinates.
\end{proposition}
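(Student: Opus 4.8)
The plan is to build ${_h}\pAbsStep$ out of the three families of proofs supplied by the preceding lemmas, following exactly the recipe of Definition \ref{defn:plain_comp}. The right-hand side $\;{!}\tSBool^{\otimes h} \otimes {!}\tNBool \otimes {!}\tHBool\;$ has $h+2$ tensor factors, so I need $h+2$ plain components with conclusions $\tSBool$ (for $0 \le m \le h-1$), $\tNBool$, and $\tHBool$. The $m$-th symbol component will come from ${^m}\pSymbol : \tSBool, \tNBool, \tHBool \vdash \tSBool$; the state component from $\pState : h\,\tSBool, \tNBool, \tHBool \vdash \tNBool$ (Lemma \ref{lemma: state for cstep3}); and the head component from $\pTapehead : h\,\tSBool, \tNBool, \tHBool, \tHBool \vdash \tHBool$.

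First I would promote each of these, via Definition \ref{defn:plain}, to a plain proof of $h\,{!}\tSBool, {!}\tNBool, {!}\tHBool \vdash B$ for the appropriate conclusion $B$. For the $m$-th symbol component: apply dereliction to the three hypotheses of ${^m}\pSymbol$ and weaken in the remaining $h-1$ copies of ${!}\tSBool$ (degree vector with a single $1$ in the $m$-th $\tSBool$ slot, $1$ in the $\tNBool$ and $\tHBool$ slots, and $0$ elsewhere). For the state component: derelict all $h+2$ hypotheses of $\pState$ (degree vector all $1$). For the head component: derelict all hypotheses of $\pTapehead$ — which uses \emph{two} copies of $\tHBool$, one to be shifted and one to be read — and then contract the two resulting ${!}\tHBool$ into one, so the $\tHBool$-degree is $2$. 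Each of these is plain by Definition \ref{defn:plain} (and stability under weakening/contraction is the content of Remark \ref{remark:stable_plain}).

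Next I would promote these $h+2$ plain proofs, tensor them together with $\otimes R$, and perform contractions to reduce the accumulated copies of each hypothesis down to a single copy each; a final $\otimes L$ on the hypotheses puts this in the stated form (this last reshaping is purely cosmetic, exactly as in the statement of ${_h}\pRelStep$). By Definition \ref{defn:plain_comp} the proof so obtained is component-wise plain with the listed components. That it encodes a single step in absolute coordinates is then a routine denotational verification of the kind carried out in Proposition \ref{prop: left right and state}: on a group-like input $\vacu_{\underline{\sigma_0}} \otimes \cdots \otimes \vacu_{\underline{\sigma_{h-1}}} \otimes \vacu_{\underline{q}} \otimes \vacu_{\underline{i}}$ the $m$-th symbol component returns $\underline{\sigma_m}$ when $i \neq m$ and $\underline{\delta_0(\sigma_i,q)}$ when $i = m$, the state component returns $\underline{\delta_1(\sigma_i,q)}$, and the head component returns $\underline{i-1}$ or $\underline{i+1}$ taken modulo $h$ according to $\delta_2(\sigma_i,q)$ — which is precisely one transition step.

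The only real obstacle is bookkeeping: tracking the exponential copies through the tensor-and-contract stage, and in particular being careful that the two $\tHBool$ inputs of $\pTapehead$ are identified by a contraction (in an actual configuration the ``head to be shifted'' and ``head to be read'' coincide, so they must not be left as two independent inputs). There is no conceptual difficulty here beyond what was already handled for ${_h}\pRelStep$.
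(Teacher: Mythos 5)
Your construction is exactly the paper's: the paper's proof simply tensors the promoted components ${^m}\pSymbol$, $\pState$ and $\pTapehead$ and contracts (noting that this uses $3$ copies of each tape $\tSBool$, $h+2$ copies of the state and $h+3$ copies of the $\tHBool$, the last because $\pTapehead$ consumes two head booleans, as you observe). Your proposal is correct and just spells out the same argument, including the routine denotational check, in more detail.
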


\begin{proof}
Clear from the above. In total, the contraction steps at the bottom of the tree produce $3$ copies of each tape $\tSBool$, $h+2$ copies of the state $\tNBool$, and $h+3$ copies of the $\tHBool$ which keeps track of the head position.
\end{proof}

\begin{remark}
One interesting feature of the absolute encoding is that it can be iterated for a variable number of steps through the use of an $\tInt$, since the denotation of ${_h}\pAbsStep$ is an endomorphism. Note that this is impossible for the relative encoding, since the type changes after each step due to the growth of the tape.
\end{remark}

\begin{remark}\label{remark:denotation_absstep}
We now compute the denotation of ${_h} \pAbsStep$. Let $H = \{0, ..., h-1\}$, and define
\begin{align*}
\alpha^m = \sum_{i_m \in \Sigma} a_{i_m}^m \dntn{\underline{i_m}}, \qquad \beta = \sum_{q \in Q} b_q \dntn{\underline{q}}, \qquad \gamma = \sum_{k \in H} c_k \dntn{\underline{k}}.
\end{align*}
We compute the value of $\dntn{{_h}\pAbsStep}$ on $\vacu_{\alpha^0} \otimes ... \otimes \vacu_{\alpha^{h-1}} \otimes \vacu_{\beta} \otimes \vacu_{\gamma}$. Let $(\hat{\sigma}_i^q, \hat{q}_i^q, \hat{d}_i^q) = \delta(i,q)$. Note that:
\begin{align*}
\dntn{{^m}\pSymbol}&(\alpha^m \otimes \beta \otimes \gamma) 
\\&= \sum_{i_m \in \Sigma} \sum_{q \in Q} a_{i_m}^m b_q\left(c_m \dntn{\underline{\hat{\sigma}_{i_m}^q}} + \sum_{\substack{k \in H \\ k \neq m}}  c_k \dntn{\underline{i_m}}\right),
\\
\dntn{\pState}&(\alpha^0 \otimes ... \otimes \alpha^{h-1} \otimes \beta \otimes \gamma) 
\\&= \sum_{i_0 \in \Sigma} \cdots \sum_{i_{h-1} \in \Sigma} \sum_{q \in Q} \sum_{k \in H} a_{i_0} \dots a_{i_{h-1}} b_q c_k\dntn{\underline{\hat{q}_{i_k}^q}}
\\&= \sum_{k \in H} c_k\left[\left(\sum_{i_{k} \in \Sigma}\sum_{q \in Q} a_{i_{k}}^{k} b_q \dntn{\underline{\hat{q}_{i_k}^q}}\right) \prod_{\substack{m \in H \\m \neq k}} \left(\sum_{i_m \in \Sigma} a_{i_m}^m\right)\right], \text{ and }
\\
\dntn{\pTapehead}&(\alpha^0 \otimes ... \otimes \alpha^{h-1} \otimes \beta \otimes \gamma^{\otimes 2}) 
\\&= \sum_{i_0 \in \Sigma} \cdots \sum_{i_{h-1} \in \Sigma} \sum_{q \in Q} \sum_{k \in H} \sum_{l \in H} a_{i_0} \dots a_{i_{h-1}} b_q c_k c_l \left(\delta_{\hat{d}_{i_l}^q = 0}\dntn{\underline{k-1}} + \delta_{\hat{d}_{i_l}^q = 1}\dntn{\underline{k+1}}\right)
\\&= \sum_{l \in H} c_l\left[\left(\sum_{i_{l} \in \Sigma}\sum_{q \in Q}\sum_{k \in H} a_{i_{l}}^{l} b_q c_k \left(\delta_{\hat{d}_{i_l}^q = 0}\dntn{\underline{k-1}} + \delta_{\hat{d}_{i_l}^q = 1}\dntn{\underline{k+1}}\right)\right) \prod_{\substack{m \in H \\m \neq l}} \left(\sum_{i_m \in \Sigma} a_{i_m}^m\right)\right],
\end{align*}
where $k\pm 1$ is computed modulo $h$.
\end{remark}

\appendix

\section{Extensions of the encoding}

In this section we consider various extensions of the base model of Turing machines. They are multi-tape machines (Section \ref{section:multi_tape}), machines with extended tape alphabets (Section \ref{section:tape_alphabet}) and machines that are able to leave the head position unchanged (Section \ref{section:headstill}). We give each of these as an extension of the original encoding, but they may also be combined to give, for example, a multi-tape Turing machine with extended tape alphabet.

\subsection{Multiple tapes}\label{section:multi_tape}

In this section we consider the modifications that need to be made in order to represent a Turing machine with $k \ge 1$ tapes. All of our tapes are both read and write. To begin with, the transition function is now
\[
\delta: \Sigma^k \times Q \lto \Sigma^{k} \times Q \times \{\text{left, right}\}^k
\]
where an input-output pair
\[
\delta( (\sigma_1,\ldots,\sigma_k), q ) = ((\sigma'_1,\ldots,\sigma'_k), q', (d_1,\ldots,d_k))
\]
represents the machine reading $\sigma_1,\ldots,\sigma_k$ on tapes $1$ through $k$ (in that order) and being in state $q$, and then writing $\sigma'_1,\ldots,\sigma'_k$ onto the tapes $1$ through $k$, taking the new state $q'$ and moving in the direction $d_i$ on tape $i$. Observe that all of the tapes are read-write; if we want the first tape to be read-only as in \cite[\S 1.2]{arorabarak} this is achieved by restricting the valid transition functions $\delta$ to be such that $\sigma'_1 = \sigma_1$ for all inputs.

The \emph{configuration} of the multi-tape Turing machine is a tuple
\[
\langle \underline{S}, \underline{T}, q \rangle \in (\Sigma^*)^k \times (\Sigma^*)^k \times Q\,.
\]
The type of $k$-\emph{tape Turing configurations} on $A$ is
\[
\tTur_A^k = k \,\big( {!} \tBint_A \otimes {!} \tBint_A \big) \otimes {!} {}_n \tBool_A\,.
\]
We need the following generalisation of Lemma \ref{encoding functions of n bools}:

\begin{lemma}\label{encoding functions of n bools mod}
Given $k \ge 1$ and any function $f: \{0, ..., n-1\}^k \to \{0, ..., m-1\}$, there exists a proof $F$ of $k\,\tNBool_A \vdash \tMBool_A$ which encodes $f$.
\end{lemma}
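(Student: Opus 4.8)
The plan is to generalize the construction in Lemma \ref{encoding functions of n bools}, where the single $n$-boolean input is replaced by $k$ of them, each of which carries one of the $k$ arguments of $f$. Recall that an $n$-boolean $\underline{i}_A : \tNBool_A$ acts (after an $\multimap L$) by selecting the $i$th of $n$ supplied elements of $\dntn{A}$; the idea is to feed each of the $k$ inputs a copy of a list of $m$ candidate outputs, indexed in a way that makes the composite of the selections pick out $a_{f(i_1,\ldots,i_k)}$.

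First I would, for each tuple $\mathbf{i} = (i_1,\ldots,i_{k-1}) \in \{0,\ldots,n-1\}^{k-1}$, build a proof $\Delta_{\mathbf{i}}$ of $A^m, \tNBool_A \vdash A$ which on an input $(a_0,\ldots,a_{m-1})$ together with $\underline{j}_A$ returns $a_{f(\mathbf{i},j)}$; this is exactly an instance of Lemma \ref{encoding functions of n bools} applied to the unary function $f(\mathbf{i},-)$, taken with its final $\multimap R$ rule omitted (this is the same manoeuvre as in the proof of Proposition \ref{encoding transition functions}, where $\Delta_{i,j}$ was formed from Lemma \ref{encoding functions of n bools}). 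Then I would assemble the $n^{k-1}$ proofs $\Delta_{\mathbf{i}}$ into a single proof of $A^m, \tNBool_A \vdash A^{n^{k-1}}$ using $\& R$, cut this against an axiom $A \vdash A$ via $\multimap L$ to introduce one more $\tNBool_A$, and repeat: at stage $t$ one has a proof of $A^m, t\,\tNBool_A \vdash A^{n^{k-t}}$, and one applies $\& R$ over the $n^{k-t}$-many sub-proofs (grouping the $\Delta$'s appropriately) and an $\multimap L$ against an axiom to get $A^m, (t{+}1)\,\tNBool_A \vdash A^{n^{k-t-1}}$. After $k-1$ such steps we reach $A^m, k\,\tNBool_A \vdash A$, and a final $\multimap R$ gives $k\,\tNBool_A \vdash \tMBool_A$. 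Concretely it is cleaner to phrase the recursion the other way, building $F$ for $k$ arguments from $F$ for $k-1$ arguments: slot $F$-for-$(k-1)$-args in place of each $\Delta_{i,j}$, exactly mirroring how Proposition \ref{encoding transition functions} built ${}^i_\delta\pTrans_A$ out of the $\Delta_{i,j}$.

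The denotation computation is then a routine induction: if $F_{k-1}$ denotes the map sending $\varphi : \dntn{\tNBool_A}$ (in the last slot) and $(a_0,\ldots,a_{m-1})$ to $\varphi(a_{f(\mathbf{i},0)},\ldots,a_{f(\mathbf{i},n-1)})$ for the appropriate $\mathbf{i}$, then threading the $\& R/\multimap L$ layers shows $\dntn{F_k}(\varphi_1\otimes\cdots\otimes\varphi_k)(a_0,\ldots,a_{m-1})$ equals $\varphi_1$ applied to the $n$ elements $\big(\dntn{F_{k-1}}(\varphi_2\otimes\cdots\otimes\varphi_k)\text{ fed the list }(a_0,\ldots,a_{m-1})\text{ at first index }i_1\big)_{i_1=0}^{n-1}$, and unwinding gives that on the standard booleans $\dntn{F}(\dntn{\underline{i_1}_A}\otimes\cdots\otimes\dntn{\underline{i_k}_A}) = \proj_{f(i_1,\ldots,i_k)} = \dntn{\underline{f(i_1,\ldots,i_k)}_A}$, which is the claim.

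I do not expect a genuine obstacle here — the only thing requiring care is bookkeeping: ensuring the $n^{k-t}$ copies of $A$ produced at each $\& R$ stage are indexed consistently with the partial evaluations of $f$, so that the iterated projection picks out the correct coordinate. This is purely combinatorial indexing and is best handled by the inductive (build-$k$-from-$(k-1)$) formulation rather than by writing the whole $k$-fold proof tree explicitly, which is presumably why the authors phrase it as a short lemma.
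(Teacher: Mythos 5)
Your proposal is correct and matches the paper's proof: the paper also argues by induction on $k$, fixing one argument to obtain $n$ proofs $F_z$ of $A^m,(k-1)\,\tNBool_A \vdash A$ (the inductive hypothesis with the final $\multimap R$ omitted), combining them with $\& R$, applying $\multimap L$ against an axiom $A \vdash A$ to consume the extra $\tNBool_A$, and finishing with $\multimap R$. Your preference for the build-$k$-from-$(k-1)$ formulation over the unrolled $n^{k-t}$-fold tree is exactly the route the paper takes.
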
 

\begin{proof}
The proof is by induction on $k$, with $k = 1$ being Lemma \ref{encoding functions of n bools}. Assuming the lemma holds for $k-1$ we for each $0 \le z \le n-1$ let $F_z$ be the proof of $(k-1)\,\tNBool_A \vdash \tMBool_A$ encoding the function
\[
f(-, z): \{0, ..., n-1\}^{k-1} \to \{0, ..., m-1\}\,.
\]
Then the proof
\begin{center}
\AxiomC{$\proofvdots{F_1}$}
\noLine\UnaryInfC{$A^m, (k-1)\, \tNBool_A \vdash A$}
\AxiomC{}
\noLine\UnaryInfC{$...$}
\AxiomC{$\proofvdots{F_n}$}
\noLine\UnaryInfC{$A^m, (k-1)\, \tNBool_A \vdash A$}

\RightLabel{\scriptsize $\&R$}
\doubleLine\TrinaryInfC{$A^m, (k-1)\, \tNBool_A \vdash A^n$}

\AxiomC{}
\UnaryInfC{$A \vdash A$}

\RightLabel{\scriptsize $\multimap L$}
\BinaryInfC{$A^m, (k-1)\, \tNBool_A, \tNBool_A \vdash A$}
\RightLabel{\scriptsize $\multimap R$}
\UnaryInfC{$(k-1)\, \tNBool_A, \tNBool_A \vdash \tMBool_A$}

\DisplayProof
\end{center}
encodes $f$.
\end{proof}

Let us now explain how to modify the earlier encoding of the step function to construct a component-wise plain proof
\[
{}_\delta \pStep_A: \tTur_{A^3}^k \vdash \tTur_A^k
\]
which encodes the step function of the $k$-tape Turing machine. Let us proceed ingredient-by-ingredient, beginning with Proposition \ref{encoding transition functions} where we defined the encoding of the components of the transition function. For the multi-tape machine $\delta$ has $2k + 1$ components rather than $3$, but using the same ideas we may define encodings $\{ {^i_\delta}\pTrans_A \}_{i=0}^{2k}$ which have on the left hand side of the turnstile the sequence of inputs $k\, \tBool_A, {}_n \tBool_A$. We use the old proofs ${^0}\pRecomb_A, {^2}\pRecomb_A$ un-modified in what follows.

As in Proposition \ref{prop: left right and state} we have to define for $1 \le r \le k$ proofs
\begin{gather*}
{^r_\delta}\pLeft_A:  \; \tBint_{A^3}, 2k\,\tBint_{A^3}, \tBint_{A^3}, 2\,\tNBool_{A^3} \vdash \tBint_A \\
{^r_\delta}\pRight_A: \; 2k\,\tBint_{A^3}, 2\,\tBint_{A^3}, 2\,\tNBool_{A^3} \vdash \tBint_A \\
{_\delta}\pState_A: \; k\,\tBint_{A^3},\,\tNBool_{A^3} \vdash \tNBool_A 
\end{gather*}
which respectively compute the left hand part of the $r$th tape, the right hand part of the $r$th tape, and the new state. To explain the necessary modifications to the proof trees given earlier, we use the informal calculations given in the proof of Proposition \ref{prop: left right and state}. To this end, suppose the configuration is
\[
\Big\langle \big( S_i \sigma_i \big)_{i=1}^k, \big( T_i \tau_i )_{i=1}^k, q \Big\rangle
\]
and that
\[
\delta\big( (\sigma_1,\ldots,\sigma_k), q \big) = \big( (\sigma'_1,\ldots,\sigma'_k), q', (d_1,\ldots,d_k) \big)
\]
We write
\[
(S'_i, T'_i) = 
\begin{cases}
(S_i, T_i \tau_i\sigma'_i) & d_i = 0 \text{ (left)}
\\(S_i\sigma'_i\tau_i, T) & d_i = 1 \text{ (right)}
\end{cases}
\]
and using this notation define 
\begin{align*}
{^r_\delta}\pLeft_A \text{ is}:
&\phantom{\xmapsto{\makebox[0.6cm]{}}} S_r \sigma_r \otimes \otimes_{i=1}^k (S_i \sigma_i)^{\otimes 2} \otimes T_r \tau_r \otimes q^{\otimes 2}
\\&\xmapsto{\makebox[0.5cm]{}}   S_r \otimes \otimes_{i=1}^k \sigma_i^{\otimes 2} \otimes \tau_r \otimes q^{\otimes 2}
& ( \pTail_A \otimes \otimes_{i=1}^k( \pHead_A^{\otimes 2} ) \otimes \pHead_A \otimes {_n}\pBooltype_A^{\otimes 2})
\\&\xmapsto{\makebox[0.5cm]{}}   S_r \otimes \tau_r \otimes \big( \!\otimes_{i=1}^k \sigma_i \otimes q\big)^{\otimes 2}
& (\text{exchange})
\\&\xmapsto{\makebox[0.5cm]{}}   S_r \otimes \tau_r \otimes \sigma'_r \otimes d_r
& (\id^{\otimes 2} \otimes \prescript{r-1}{\delta}\pTrans_A \otimes 
\prescript{k+r}{\delta}\pTrans_A)
\\&\xmapsto{\makebox[0.5cm]{}}   S'_r
& ({^0}\pRecomb_A)\end{align*}
\begin{align*}
{^r_\delta}\pRight_A \text{ is}: 
&\phantom{\xmapsto{\makebox[0.6cm]{}}} \otimes_{i=1}^k (S_i \sigma_i)^{\otimes 2} \otimes (T_r \tau_r)^{\otimes 2} \otimes q^{\otimes 2}
\\&\xmapsto{\makebox[0.5cm]{}}   \otimes_{i=1}^k \sigma_i^{\otimes 2} \otimes \tau_r \otimes T_r \otimes q^{\otimes 2}
& (\otimes_{i=1}^k \pHead_A^{\otimes 2} \otimes \pHead_A \otimes \pTail_A \otimes {_n}\pBooltype_A^{\otimes 2})
\\&\xmapsto{\makebox[0.5cm]{}}   T_r \otimes \tau_r \otimes \big(\!\otimes_{i=1}^k \sigma_i \otimes q\big)^{\otimes 2}
& (\text{exchange})
\\&\xmapsto{\makebox[0.5cm]{}}   T_r \otimes \tau_r \otimes \sigma'_r \otimes d_r
& (\id^{\otimes 2} \otimes \prescript{r-1}{\delta}\pTrans_A \otimes \prescript{k+r}{\delta}\pTrans_A)
\\&\xmapsto{\makebox[0.5cm]{}}   T'_r
& ({^1}\pRecomb_A) \\ \\
{_\delta}\pState_A \text{ is}:
&\phantom{\xmapsto{\makebox[0.6cm]{}}} \otimes_{i=1}^k (S_i \sigma_i) \otimes q
\\&\xmapsto{\makebox[0.5cm]{}}   \otimes_{i=1}^k \sigma_i \otimes q
& (\otimes_{i=1}^k \pHead_A \otimes {_n}\pBooltype_A)
\\&\xmapsto{\makebox[0.5cm]{}}   q'.
& ({^k_\delta}\pTrans_A)
\end{align*}
We define ${}_\delta \pStep$ by derelicting and promoting the proofs
\[
{^1_\delta}\pLeft_A,\ldots,{^r_\delta}\pLeft_A, {^1_\delta}\pRight_A,\ldots, {^r_\delta}\pRight_A, {_\delta}\pState_A
\]
then tensoring them together and performing contractions. The contractions to be performed are also contained implicitly in the above calculations: for example, the ordered inputs to ${}^r_{\delta} \pLeft_A$ are to be identified, by such contractions, with (1) the left hand part of the $r$th tape (2) two copies of the left part of each of the $k$ tapes (3) a copy of the right part of the $r$th tape (4) and two copies of the state.

\subsection{Extended tape alphabet}\label{section:tape_alphabet}

In Section \ref{section: turing machines} we have encoded only Turing machines with tape alphabet $\Sigma = \{ 0, 1 \}$, and in this section we explain the routine modifications to be made for arbitrary $\Sigma$. We begin by generalising the presentation of \cite[\S 3.2]{clift_murfet} following \cite[\S 2.5.3]{girard_complexity} to give the type of lists. Throughout we fix an integer $s \ge 1$.

\begin{definition}
The type of \emph{$s$-lists on $A$} is defined inductively by ${_1} \tList_A = \tInt_A$ and
\[
{_{s+1}} \tList_A = {!}(A \multimap A) \multimap {_s} \tList_A\,.
\]
For $0 \le i \le s - 1$ the proof $\underline{i}_A$ of ${}_s \tList_A$ is the proof
\begin{center}
\AxiomC{$A \multimap A \vdash A \multimap A$}
\RightLabel{\scriptsize der}
\UnaryInfC{${!}(A \multimap A) \vdash A \multimap A$}
\doubleLine\RightLabel{\scriptsize weak}
\UnaryInfC{$s\,{!}(A \multimap A) \vdash A \multimap A$}
\DisplayProof
\end{center}
where in the final line the $i$th copy (reading from the left and starting from $0$) of ${!}(A \multimap A)$ is the one from the previous rule, and the rest are the result of weakenings. The proof $\emptyset$ is the result of applying $s$ weakenings and then $\multimap R$ rules.
\end{definition}

It will be convenient in this section to use a shorthand for proof trees that we now introduce. Suppose given a sequence of proofs $( \pi_i : A_i \vdash B_i )_{i=1}^s$ and a proof $\tau : C \vdash D$. By applying a $\multimap L$ rule to the pair $\pi_s, \tau$ we obtain a proof of
\[
A_s, B_s \multimap C \vdash D\,.
\]
Applying a $\multimap L$ rule again to $\pi_{s-1}$ and the proof above, we get a proof of
\[
A_{s-1}, B_{s-1} \multimap ( B_s \multimap C ) \vdash D\,.
\]
The result of continuing this inductive process with all the $\pi_i$ in reverse order using $\multimap L$ rules will be denoted as a proof tree by
\begin{center}
\AxiomC{$\proofvdots{\pi_1}$}
\noLine\UnaryInfC{$A_1 \vdash B_1$}
\AxiomC{}
\noLine\UnaryInfC{$\Ddots$}
\AxiomC{$\proofvdots{\pi_s}$}
\noLine\UnaryInfC{$A_s \vdash B_s$}
\AxiomC{$\proofvdots{\tau}$}
\noLine\UnaryInfC{$C \vdash D$}
\RightLabel{\scriptsize $\multimap L$}
\BinaryInfC{$A_s, B_s \multimap C \vdash D$}
\doubleLine\RightLabel{\scriptsize $\multimap L$}
\TrinaryInfC{$A_1, \ldots, A_s, B_1 \multimap ( \cdots \multimap (B_s \multimap C)\cdots) \vdash D$}
\DisplayProof
\end{center}
For the rest of this section $A$ is fixed and we write $\underline{S}$ for $\underline{S}_A$. We write $E = A \multimap A$.

\begin{definition} \label{defn:listconcat}
The proof $\pConcat_A$ for lists is defined to be
\begin{center}
\AxiomC{${!} E_1 \vdash {!}E_1$}
\AxiomC{}
\noLine\UnaryInfC{$\Ddots$}
\AxiomC{${!} E_{s} \vdash {!} E_{s}$}
\AxiomC{${!} E_1 \vdash {!} E_1$}
\AxiomC{}
\noLine\UnaryInfC{$\Ddots$}
\AxiomC{${!} E_s \vdash {!} E_s$}
\AxiomC{$\proofvdots{\pComp_A}$}
\noLine\UnaryInfC{$E, E \vdash E$}
\RightLabel{\scriptsize $\multimap L$}
\BinaryInfC{${!} E_s, E, {!} E_s \multimap E \vdash E$}
\RightLabel{\scriptsize $\multimap L$}
\doubleLine\TrinaryInfC{${!} E_1, \ldots, {!} E_s, E, {}_s \tList_A \vdash E$}
\RightLabel{\scriptsize $\multimap L$}
\BinaryInfC{${!} E_1, \ldots, {!}E_s, {!}E_s, {!}E_{s} \multimap E, {}_s \tList_A \vdash E$}
\RightLabel{\scriptsize $\multimap L$}
\doubleLine\TrinaryInfC{${!} E_1, \ldots, {!} E_s, {!} E_1, \ldots, {!} E_s, {}_s \tList_A, {}_s \tList_A \vdash E$}
\RightLabel{\scriptsize ctr}
\doubleLine\UnaryInfC{${!} E_1, \ldots, {!} E_s, {}_s \tList_A, {}_s\tList_A \vdash E$}
\RightLabel{\scriptsize $\multimap R$}
\doubleLine\UnaryInfC{${}_s\tList_A, {}_s\tList_A \vdash {}_s\tList_{A}$}
\DisplayProof
\end{center}
where $E_1,\ldots,E_s$ label copies of $E$ and in the final $\multimap R$ rules, the copies of $E$ are moved across the turnstile in \emph{decreasing} order: that is, the first $\multimap R$ rule applies to $E_s$, the second to $E_{s-1}$, and so on. For $s = 2$ this agrees with the proof $\pConcat_A$ for $\tBint_A$.
\end{definition}

\begin{definition} Given a nonempty sequence $S \in \{0,\ldots,s-1\}^*$ we define the proof $\underline{S}_A$ of ${}_s \tList_A$ by setting $S = S' \sigma$ with $\sigma \in \{0,\ldots,s-1\}$ and
\[
\underline{S}_A = \pConcat_A( \underline{S'}_A, \underline{\sigma}_A )\,.
\]
For $s = 2$ this agrees up to cut-elimination with the proof $\underline{S}_A$ of \cite[\S 3.2]{clift_murfet}.
\end{definition}

\begin{proposition} \label{decomposing bints headlist}
There exists a proof $\pHead_A$ of ${}_s \tList_{A^{s+1}} \vdash {}_s \tBool_A$ which encodes the function $\dntn{\underline{S \sigma}_{A^{s+1}}} \mapsto \dntn{\underline{\sigma}_A}$.
\end{proposition}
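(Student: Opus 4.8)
The plan is to mirror the $s=2$ construction of Proposition~\ref{decomposing bints head}, replacing the pair of ``copy-a-coordinate'' maps $\pi_0,\pi_1$ by an $s$-tuple. Work with the base type $A^{s+1}$ and regard its $s+1$ coordinates as $s$ ``storage'' slots $0,\ldots,s-1$ together with a single ``accumulator'' slot $s$. For $0\le i\le s-1$ let $\pi_i$ be the easily constructed proof of $A^{s+1}\vdash A^{s+1}$ with denotation
\[
\dntn{\pi_i}(x_0,\ldots,x_{s-1},x_s) = (x_0,\ldots,x_{s-1},x_i)\,,
\]
built from an $\& R$ with $s+1$ premises, the $j$th one ($0\le j\le s-1$) being $\& L_j$ and the last being $\& L_i$. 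Similarly let $\rho$ be the proof of $A^s\vdash A^{s+1}$ with $\dntn{\rho}(x_0,\ldots,x_{s-1}) = (x_0,\ldots,x_{s-1},x_0)$.

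The proof $\pHead_A$ is then assembled exactly as in the binary case. First build a sub-proof of $A^{s+1}\multimap A^{s+1}\vdash {}_s\tBool_A$ by applying $\multimap L$ to $\rho$ and the projection $\& L_s : A^{s+1}\vdash A$, followed by $\multimap R$; its denotation sends a map $\varphi$ to the boolean $(a_0,\ldots,a_{s-1})\mapsto\proj_s\circ\varphi\circ\dntn{\rho}(a_0,\ldots,a_{s-1})$. Recalling that ${}_s\tList_{A^{s+1}}$ is an iterated implication with $s$ exponential premises ${!}(A^{s+1}\multimap A^{s+1})$, corresponding to the $s$ ``digits'', we apply $\multimap L$ exactly $s$ times, cutting in the promotions $\prom(\pi_0),\ldots,\prom(\pi_{s-1})$ in order against this sub-proof. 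The result is a proof of ${}_s\tList_{A^{s+1}}\vdash {}_s\tBool_A$, which we take to be $\pHead_A$.

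For the verification, recall that $\dntn{\underline{S\sigma}_{A^{s+1}}}$ evaluated on $\dntn{\pi_0},\ldots,\dntn{\pi_{s-1}}$ is the composite of the maps $\dntn{\pi_c}$ as $c$ ranges over the digits of $S\sigma$, with the \emph{first} digit applied first, hence the last digit $\sigma$ applied last (outermost). Since each $\dntn{\pi_i}$ fixes all storage slots and overwrites the accumulator slot with the content of slot $i$, any such composite has denotation $(x_0,\ldots,x_{s-1},x_s)\mapsto(x_0,\ldots,x_{s-1},x_\sigma)$: only the outermost factor $\pi_\sigma$ survives. Precomposing with $\dntn{\rho}$ and post-composing with $\proj_s$ yields $(a_0,\ldots,a_{s-1})\mapsto a_\sigma = \proj_\sigma$, which is $\dntn{\underline{\sigma}_A}$ as required. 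One must also treat the degenerate case in which $S\sigma$ is empty: there the composite is the identity and the denotation reduces to $\proj_s\circ\dntn{\rho}(a_0,\ldots,a_{s-1}) = a_0$, correctly returning the blank symbol $\underline{0}$.

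Both the construction and its verification are bookkeeping; the main points requiring care are getting the composition order right (the last digit of $S\sigma$ gives the outermost, and hence dominant, factor in the iterated composite) and correctly matching the $s$ applications of $\multimap L$ at the bottom of the proof tree to the $s$ exponential premises of ${}_s\tList_{A^{s+1}}$. I do not expect any genuine obstacle beyond this.
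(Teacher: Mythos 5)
Your proposal is correct and follows essentially the same route as the paper: the same maps $\pi_i(x_0,\ldots,x_{s-1},z)=(x_0,\ldots,x_{s-1},x_i)$ and $\rho(x_0,\ldots,x_{s-1})=(x_0,\ldots,x_{s-1},x_0)$, the projection $\& L_s$ onto the accumulator slot, and the $s$ successive $\multimap L$ rules feeding in the promotions of the $\pi_i$, with the same ``only the outermost factor survives'' verification and the same treatment of the empty-list case.
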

\begin{proof}
For $0 \le i \le s - 1$ let $\pi_i$ be the proof of $A^{s+1} \vdash A^{s+1}$ with
\[
\pi_i( a_0, \ldots, a_{s-1}, z ) = ( a_0, \ldots, a_{s-1}, a_i )\,.
\]
Let $\rho$ be the proof of $A^s \vdash A^{s+1}$ with
\[
\rho( a_0,\ldots,a_{s-1} ) = (a_0, \ldots, a_{s-1}, a_0)
\]
Define by $\pHead_A$ the following proof:
\begin{center}
\AxiomC{$\proofvdots{\pi_{0}}$}
\noLine\UnaryInfC{$A^{s+1} \vdash A^{s+1}$}
\RightLabel{\scriptsize $\multimap R$}
\UnaryInfC{$\vdash A^{s+1} \multimap A^{s+1}$}
\RightLabel{\scriptsize prom}
\UnaryInfC{$\vdash {!}(A^{s+1} \multimap A^{s+1})$}

\AxiomC{}
\noLine\UnaryInfC{$\Ddots$}

\AxiomC{$\proofvdots{\pi_{s-1}}$}
\noLine\UnaryInfC{$A^{s+1} \vdash A^{s+1}$}
\RightLabel{\scriptsize $\multimap R$}
\UnaryInfC{$\vdash A^{s+1} \multimap A^{s+1}$}
\RightLabel{\scriptsize prom}
\UnaryInfC{$\vdash {!}(A^{s+1} \multimap A^{s+1})$}

\AxiomC{$\proofvdots{\rho}$}
\noLine\UnaryInfC{$A^s \vdash A^{s+1}$}

\AxiomC{}
\UnaryInfC{$A \vdash A$}
\RightLabel{\scriptsize $\& L_s$}
\UnaryInfC{$A^{s+1} \vdash A$}

\RightLabel{\scriptsize $\multimap L$}
\BinaryInfC{$A^{s+1} \multimap A^{s+1}, A^s \vdash A$}
\RightLabel{\scriptsize $\multimap R$}
\UnaryInfC{$A^{s+1} \multimap A^{s+1} \vdash \tBool_A$}
\RightLabel{\scriptsize $\multimap L$}
\BinaryInfC{$\tInt_{A^{s+1}} \vdash {}_s \tBool_A$}
\doubleLine\RightLabel{\scriptsize $\multimap L$}
\TrinaryInfC{${}_s \tList_{A^{s+1}} \vdash {}_s \tBool_A$}
\DisplayProof
\end{center}
where the rule $\&L_s$ introduces $s$ new copies of $A$ on the left, such that the original copy is at position $s$ (that is, the last element of the tuple).
\end{proof}

\begin{proposition} \label{decomposing bints taillist}
There exists a proof $\pTail_A$ of ${}_s \tList_{A^{s+1}} \vdash {}_s \tList_A$ which encodes the function $\dntn{\underline{S \sigma}_{A^{s+1}}} \mapsto \dntn{\underline{S}_A}$.
\end{proposition}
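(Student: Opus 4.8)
The plan is to follow the proof of Proposition~\ref{decomposing bints tail}, which is the case $s = 2$: the ingredient that does the work is a ``shift register'' built from the predecessor of $\tInt_A$, and here only the arities change. First I would introduce a proof $\pi$ of $A^{s+1} \multimap A^{s+1} \vdash A \multimap A$ whose denotation is $\dntn{\pi}(\varphi)(a) = \proj_0\big(\varphi(a,\ldots,a)\big)$; it is assembled from an axiom, a $\&R$ producing the diagonal $A \vdash A^{s+1}$, a $\multimap L$ against $\varphi$, a $\&L_0$, and a $\multimap R$, which is literally the $s=2$ proof $\pi$ with every $3$ replaced by $s+1$.

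Next I would define a proof $\rho$ of ${!}(A \multimap A) \vdash A^{s+1} \multimap A^{s+1}$ whose denotation is
\[
\dntn{\rho}(\ket{\alpha_1,\ldots,\alpha_t}_\gamma)(a_0,\ldots,a_s) = \begin{cases} (a_s,\ldots,a_s,\gamma a_s) & t = 0\\ (a_s,\ldots,a_s,\alpha_1 a_s) & t = 1\\ (a_s,\ldots,a_s,0) & t > 1.\end{cases}
\]
Concretely $\rho$ is a $\&R$ combining $s+1$ branches, followed by a $\multimap R$: the first $s$ branches each consist of the axiom $A \vdash A$, a $\&L_s$ (which selects the last coordinate of $A^{s+1}$) and a weakening in a ${!}(A \multimap A)$; the last branch consists of two axioms, a $\multimap L$ to form $A, A \multimap A \vdash A$, a $\&L_s$, and a dereliction, yielding $A^{s+1}, {!}(A \multimap A) \vdash A$. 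This is the evident generalisation of the three-branch proof $\rho$ from the $s = 2$ case.

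Then I would build $\pTail_A$ by taking $s$ promoted copies of $\rho$ (one for each symbol $0,\ldots,s-1$) and combining them with $\pi$ by a chain of $s$ applications of $\multimap L$ descending the inductive tower ${}_1 \tList_{A^{s+1}}, {}_2 \tList_{A^{s+1}},\ldots,{}_s \tList_{A^{s+1}}$ --- precisely as the $s = 2$ tree fed two copies of $\prom(\rho)$ through $\tInt_{A^3}$ and then $\tBint_{A^3}$ --- accumulating $s$ copies of ${!}(A \multimap A)$ on the left of the turnstile, and closing with $s$ applications of $\multimap R$ to reach ${}_s \tList_{A^{s+1}} \vdash {}_s \tList_A$. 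For the verification, feeding the vacuum vectors $\vacu_{\gamma_0},\ldots,\vacu_{\gamma_{s-1}}$ into the output $s$-list substitutes for the symbol $j$ the map $\tilde\gamma_j := \dntn{\rho}\vacu_{\gamma_j}$, namely $(a_0,\ldots,a_s)\mapsto(a_s,\ldots,a_s,\gamma_j a_s)$, so that for $S\sigma = s_k\cdots s_1 s_0$ with $\sigma = s_0$ the $s$-list produces the corresponding composite of the $\tilde\gamma_{s_i}$ and $\pi$ returns its $\proj_0$ on the diagonal. Tracing this exactly as in the worked example in the proof of Proposition~\ref{decomposing bints tail} --- where the last digit $\sigma$ corresponds to the outermost-applied factor --- each $\tilde\gamma_{s_i}$ copies the previous last coordinate into positions $0,\ldots,s-1$ before overwriting it, so that after all the applications positions $0,\ldots,s-1$ carry exactly the composite with the $\sigma$-factor removed; applying $\proj_0$ gives $\dntn{\underline{S}_A}$. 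I would also note the degenerate cases: a one-symbol list is sent to $\underline{\emptyset}_A$, and the empty list (where no copy of $\rho$ is applied, so $\varphi = \id$) is also sent to $\underline{\emptyset}_A$.

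The only thing requiring real care is the coordinate bookkeeping --- keeping straight the roles of the last coordinate versus positions $0,\ldots,s-1$ inside $\rho$, and matching the order of composition built into the definition of $\underline{S}_A$ via $\pConcat_A$ for $s$-lists (in particular which copy of $\prom(\rho)$ is fed at which level of the tower, and in which order the final $\multimap R$ rules are applied). This is the same bookkeeping that appears for $s = 2$, so transcribing that argument handles it; I do not expect a genuine obstacle.
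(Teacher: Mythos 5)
Your proposal is correct and matches the paper's proof essentially verbatim: the same $\pi$ with denotation $\varphi \mapsto \proj_0(\varphi(a,\ldots,a))$, the same $(s+1)$-branch $\rho$ with $s$ weakening branches and one dereliction branch, and the same assembly of $s$ promoted copies of $\rho$ with one copy of $\pi$ down the $\tList$ tower, exactly as in the $s=2$ case of Proposition~\ref{decomposing bints tail}. The coordinate bookkeeping you flag is handled in the paper only by the phrase ``as in Proposition~\ref{decomposing bints tail}'', so your level of detail is entirely adequate.
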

\begin{proof}
We could actually give this as a proof of ${}_s \tList_{A^2} \vdash {}_s \tList_A$, but for compatibility with $\pHead_A$ we use the more wasteful type. Define $\pi$ to be the proof
\begin{center}
\AxiomC{}
\UnaryInfC{$A \vdash A$}
\AxiomC{}
\noLine\UnaryInfC{$\cdots$}
\AxiomC{}
\UnaryInfC{$A \vdash A$}
\doubleLine\RightLabel{\scriptsize $\& R$}
\TrinaryInfC{$A \vdash A^{s+1}$}
\AxiomC{}
\UnaryInfC{$A \vdash A$}
\RightLabel{\scriptsize $\& L_0$}
\UnaryInfC{$A^{s+1} \vdash A$}
\RightLabel{\scriptsize $\multimap L$}
\BinaryInfC{$A, A^{s+1} \multimap A^{s+1} \vdash A$}
\RightLabel{\scriptsize $\multimap R$}
\UnaryInfC{$A^{s+1} \multimap A^{s+1} \vdash A \multimap A$}
\DisplayProof
\end{center}
and $\rho$ to be
\begin{center}
\AxiomC{}
\UnaryInfC{$A \vdash A$}
\RightLabel{\scriptsize $\&L_s$}
\UnaryInfC{$A^{s+1} \vdash A$}
\RightLabel{\scriptsize weak}
\UnaryInfC{$A^{s+1}, {!}(A \multimap A) \vdash A$}

\AxiomC{}
\noLine\UnaryInfC{$\cdots$}

\AxiomC{}
\UnaryInfC{$A \vdash A$}
\AxiomC{}
\UnaryInfC{$A \vdash A$}
\RightLabel{\scriptsize $\multimap L$}
\BinaryInfC{$A, A \multimap A \vdash A$}
\RightLabel{\scriptsize $\& L_s$}
\UnaryInfC{$A^{s+1}, A \multimap A \vdash A$}
\RightLabel{\scriptsize der}
\UnaryInfC{$A^{s+1}, {!}(A \multimap A) \vdash A$}

\doubleLine\RightLabel{\scriptsize $\& R$}
\TrinaryInfC{$A^{s+1}, {!}(A \multimap A) \vdash A^{s+1}$}
\RightLabel{\scriptsize $\multimap R$}
\UnaryInfC{${!}(A \multimap A) \vdash A^{s+1} \multimap A^{s+1}$}
\DisplayProof
\end{center}
where the $\cdots$ indicates $s - 1$ branches coming into the $\& R$ rule which are all proofs of $A^{s+1}, {!}(A \multimap A) \vdash A$ identical to the leftmost branch. Finally, $\pTail_A$ is constructed from $s$ copies of $\rho$ and one copy of $\pi$ as in Proposition \ref{decomposing bints tail}.
\end{proof}

Now let $M$ be a Turing machine with tape alphabet $\Sigma = \{0,\ldots,s-1\}$ and define
\[
\tTur^{\Sigma}_A = {!} {}_s \tList_A \otimes {!} {}_s \tList_A \otimes {!} {}_n \tBool_A\,.
\]
We now define a proof
\[
{}_\delta \pStep^{\Sigma}_A: \tTur^{\Sigma}_{A^{s+1}} \vdash \tTur^{\Sigma}_A
\]
encoding the step function of this Turing machine. We have obvious variant of $\pBooltype_A$ which converts ${}_n \tBool_{A^{s+1}}$ to ${}_n \tBool_A$. The first component of the transition function is now encoded as a proof ${}^0_{\delta} \pTrans_A: {}_s \tBool_A, {}_n \tBool_A \vdash {}_s \tBool_A$ which is constructed in the same way, but applying the $\& R$ rule to a family of proofs $\Delta_{0,0}, \ldots, \Delta_{0,s-1}$ encoding respectively $\delta_0(0,-), \ldots, \delta_0(s-1,-)$. Both ${}^1_{\delta} \pTrans_A, {}^2_{\delta} \pTrans_A$ now take as their first input a ${}_s \tBool_A$ rather than a $\tBool_A$, but their constructions are otherwise unchanged. 

Next we need the following straightforward variant of Lemma \ref{appending bools to bints}:

\begin{lemma} \label{appending bools to bintslist}
Let $\cat{W} = \{ W_{ab} \}_{0 \le a,b \le s -1 }$ be an indexed family of sequences $W_{ab} \in \Sigma^*$. There exists a proof $\pi(\cat{W})$ of ${}_s \tList_A, {}_s \tBool_A, {}_s \tBool_A \vdash {}_s \tList_A$ which encodes the function \[(S, \sigma, \tau) \mapsto SW_{\sigma\tau}.\]
\end{lemma}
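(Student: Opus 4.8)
The plan is to mimic the proof of Lemma \ref{appending bools to bints}, which treated the binary case $s = 2$, replacing the two-way case split by an $s$-way one and using the list concatenation proof $\pConcat_A$ of Definition \ref{defn:listconcat} in place of the concatenation proof for binary integers.

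\textbf{Step 1: the one-variable version.} For a fixed family $\{ W_i \}_{0 \le i \le s-1}$ of sequences in $\Sigma^*$ I would first build a proof $\theta(W_0,\ldots,W_{s-1})$ of ${}_s \tList_A, {}_s \tBool_A \vdash {}_s \tList_A$ encoding $(S,\sigma) \mapsto S W_\sigma$. For each $i$, let $\pConcat_A(-, W_i)$ be the proof of ${}_s \tList_A \vdash {}_s \tList_A$ obtained by cutting $\underline{W_i}_A$ against $\pConcat_A$ so that the \emph{second} list argument is consumed, exactly as in Example \ref{example: concat and append} but now for $s$-lists; its denotation is $S \mapsto S W_i$. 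Writing $E = A \multimap A$, I would undo the final $s+1$ right $\multimap$ introductions of each $\pConcat_A(-, W_i)$ so as to present it with conclusion $A$ and hypotheses ${}_s \tList_A, s\,{!}E, A$, feed the $s$ resulting proofs into a single $\& R$ rule (after the usual exchanges) to obtain a proof with conclusion $A^s$ sending $S$ to the tuple $(S W_0, \ldots, S W_{s-1})$ of elements of $\dntn{{}_s \tList_A}$, then apply $\multimap L$ against the ${}_s \tBool_A = A^s \multimap A$ and reinstate the $s+1$ right $\multimap$ introductions. Since the denotation of $\underline{\sigma}_A : {}_s \tBool_A$ is $\proj_\sigma$, the result encodes $(S, \sigma) \mapsto S W_\sigma$.

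\textbf{Step 2: the two-variable version.} Applying Step 1 to the family $\{ W_{ab} \}_b$ for each fixed $a$ gives proofs $\theta_a := \theta(W_{a0},\ldots,W_{a,s-1})$ of ${}_s \tList_A, {}_s \tBool_A \vdash {}_s \tList_A$ encoding $(S,\tau) \mapsto S W_{a\tau}$. I would then run one more $\& R/\multimap L$ layer: present each $\theta_a$ with conclusion $A$ (again undoing the trailing $\multimap R$'s), combine $\theta_0,\ldots,\theta_{s-1}$ with an $\& R$ rule, cut against the second ${}_s \tBool_A$ via $\multimap L$, and re-pack with $\multimap R$'s to obtain $\pi(\cat{W})$ of ${}_s \tList_A, {}_s \tBool_A, {}_s \tBool_A \vdash {}_s \tList_A$. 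Its denotation is $(S, \sigma, \tau) \mapsto \dntn{\theta_\sigma}(S,\tau) = S W_{\sigma\tau}$, which is the assertion of the lemma; this is exactly the ``two instances of the $\& R$ and $\multimap L$ rules'' version alluded to in the proof of Lemma \ref{appending bools to bints}.

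The only non-formal content is the fact that $\pConcat_A$ of Definition \ref{defn:listconcat} genuinely encodes concatenation of $s$-lists — so that $\pConcat_A(-, W)$ appends $W$ on the right — which follows from the denotation of $\pComp_A$ by the same computation as in the $s = 2$ case of Example \ref{example: concat and append}, together with the observation that applying $\multimap L$ with $\underline{i}_A$ against an $\& R$-branched proof selects the $i$th branch. The main (very mild) obstacle is bookkeeping: tracking which copies of ${!}E$ get contracted inside $\pConcat_A$ and inserting the exchange rules needed to line up the premises through the two nested $\& R/\multimap L$ layers, exactly as for Lemma \ref{appending bools to bints}.
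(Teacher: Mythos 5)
Your construction is correct and is essentially the proof the paper intends: the paper states this lemma as a ``straightforward variant'' of Lemma \ref{appending bools to bints}, whose proof is exactly the one-variable $\& R/\multimap L$ layer you build in Step 1 (with $s$ branches and the $s$-list $\pConcat_A$ of Definition \ref{defn:listconcat} in place of the binary one) followed by the second nested layer of Step 2, matching the remark that the two-variable case uses ``two instances of the $\& R$ and $\multimap L$ rules rather than one.'' No gaps; the bookkeeping points you flag (contractions inside $\pConcat_A$, exchanges, and that cutting $\underline{i}_A$ against an $\& R$-branched proof selects the $i$th branch) are exactly the routine details the paper elides.
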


The proofs ${}^i \pRecomb_A$ of ${}_s \tList_A, 2\,{}_s \tBool_A, \tBool_A \vdash {}_s \tList_A$ are defined in the same way as before using the lemma. From these ingredients we construct proofs
\begin{align*}
{_\delta}\pLeft^{\Sigma}_A:&  \; 3\,{}_s \tList_{A^{s+1}}, \phantom{1}\,{}_s \tList_{A^{s+1}}, 2\,\tNBool_{A^{s+1}} \vdash {_s}\tList_A \\
{_\delta}\pRight^{\Sigma}_A:& \; 2\,{_s}\tList_{A^{s+1}}, 2\,{_s}\tList_{A^{s+1}}, 2\,\tNBool_{A^{s+1}} \vdash {}_s \tList_A \\
{_\delta}\pState^{\Sigma}_A:& \; \phantom{3}\,{_s}\tList_{A^{s+1}},\phantom{1\,{}_s \tList_{A^{s=1}}, 1\,}  \tNBool_{A^{s+1}} \vdash \tNBool_A 
\end{align*}
as before, which give the components of ${}_\delta \pStep^{\Sigma}_A$.

\subsection{Head movement}\label{section:headstill}

In the main text we considered Turing machines in which the head can only move right or left. In this section we add the ability for the head to stay still. The set of directions is then $\{ \textrm{left, right, stay} \}$ which we encode as integers $0, 1, 2$. The transition function is
\[
\delta: \Sigma \times Q \lto \Sigma \times Q \times \{ \textrm{left, right, stay} \}\,.
\]
The type $\tTur_A$ of Turing configurations is unchanged, as are ${}^0_{\delta} \pTrans_A, {}^1_{\delta} \pTrans_A$, but the third transition function is ${^2_\delta}\pTrans_A: \; \tBool_A, \tNBool_A \vdash {}_3 \tBool_A$. The primary change is to the recombination functions:

\begin{proposition}
There exist proofs ${^0}\pRecomb_A$ and ${^1}\pRecomb_A$ of \[\tBint_A, 2\, \tBool_A, {}_3 \tBool_A \vdash \tBint_A\] which encode the functions
\[
(S, \tau, \sigma, d) \mapsto
\begin{cases}
S & \text{if $d = 0$ (left)} \\
S \sigma \tau & \text{if $d = 1$ (right)}\\
S \sigma & \text{if $d = 2$ (stay)}
\end{cases}
\quad \text{and} \quad
(T, \tau, \sigma, d) \mapsto
\begin{cases}
T \tau \sigma & \text{if $d = 0$ (left)} \\
T & \text{if $d = 1$ (right)} \\
T \tau & \text{if $d = 2$ (stay)}
\end{cases}
\]
respectively.
\end{proposition}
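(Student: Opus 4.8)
The plan is to adapt the proof of Proposition~\ref{prop:recombination steps} essentially verbatim, replacing the two-way case split used there (a single $\& R$ whose two premises correspond to $d = 0$ and $d = 1$, combined via a $\multimap L$ rule with the $\tBool_A$ encoding $d$) by a three-way case split: a $\& R$ with three premises, combined via $\multimap L$ with a ${}_3\tBool_A$ encoding $d \in \{0,1,2\}$. The only genuinely new choice is which instance of the building block $\pi(W_{00},W_{01},W_{10},W_{11})$ of Lemma~\ref{appending bools to bints} to place in the new ``stay'' branch.

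Concretely, write $E = A \multimap A$ and, exactly as in the cited proof, take each $\pi(W_{00},W_{01},W_{10},W_{11})$ with its final $\multimap R$ rules omitted, so that it is a proof of $\tBint_A, 2\,\tBool_A, {!}E, {!}E, A \vdash A$ encoding $(S,\tau,\sigma) \mapsto SW_{\tau\sigma}$ (the first $\tBool_A$ slot plays the role of $\tau$ and the second that of $\sigma$, so the internal parameters of $\pi$ appear swapped relative to the statement of Lemma~\ref{appending bools to bints}). For ${}^0\pRecomb_A$ we take the three premises of the $\& R$ to be $\pi(\emptyset,\emptyset,\emptyset,\emptyset)$ for $d = 0$ (output $S$), $\pi(00,10,01,11)$ for $d = 1$ (output $S\sigma\tau$), and $\pi(0,1,0,1)$ for $d = 2$ (output $S\sigma$), giving a proof of $\tBint_A, 2\,\tBool_A, {!}E, {!}E, A \vdash A^3$; we then apply $\multimap L$ against an axiom $A \vdash A$ to consume a ${}_3\tBool_A$, and finally three $\multimap R$ rules to obtain $\tBint_A, 2\,\tBool_A, {}_3\tBool_A \vdash \tBint_A$. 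For ${}^1\pRecomb_A$ we use instead the three premises $\pi(00,01,10,11)$ for $d = 0$ (output $T\tau\sigma$), $\pi(\emptyset,\emptyset,\emptyset,\emptyset)$ for $d = 1$ (output $T$), and $\pi(0,0,1,1)$ for $d = 2$ (output $T\tau$).

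The verification is then word for word that of Proposition~\ref{prop:recombination steps}: the denotation of the ${}_3\tBool_A$ equal to $\underline{d}$ is $\proj_d$, so the $\multimap L$ step picks out the $d$th premise, and each premise already encodes the relevant append operation by Lemma~\ref{appending bools to bints}, from which the two displayed case analyses follow after substituting the chosen values of the $W_{ab}$. I expect the only real obstacle to be notational, namely keeping the four strings $W_{00},W_{01},W_{10},W_{11}$ aligned with the ordering of the two $\tBool_A$ inputs; in particular there is no need for the extended-alphabet variant Lemma~\ref{appending bools to bintslist}, since the tape alphabet here is still $\Sigma = \{0,1\}$.
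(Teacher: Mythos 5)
Your proposal is correct and is essentially identical to the paper's proof: the paper builds ${^0}\pRecomb_A$ from exactly the three branches $\pi(\emptyset,\emptyset,\emptyset,\emptyset)$, $\pi(00,10,01,11)$, $\pi(0,1,0,1)$ joined by $\& R$ into $A\,\&\,A\,\&\,A$ and consumed by a $\multimap L$ against the ${}_3\tBool_A$, followed by the $\multimap R$ rules, and leaves ${^1}\pRecomb_A$ as ``similar''. Your choices for ${^1}\pRecomb_A$ (including $\pi(0,0,1,1)$ for the stay branch) and your bookkeeping of the $(\tau,\sigma)$ ordering of the boolean inputs are exactly right.
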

\begin{proof}
The desired proof ${^0}\pRecomb_A$ is (with $\Gamma = \tBint_A, 2\, \tBool_A, {!}E, {!}E, A$):
\begin{center}
\AxiomC{$\proofvdots{\pi(\emptyset, \emptyset, \emptyset, \emptyset)}$}
\noLine\UnaryInfC{$\Gamma  \vdash A$}
\AxiomC{$\proofvdots{\pi(00, 10, 01, 11)}$}
\noLine\UnaryInfC{$\Gamma \vdash A$}
\AxiomC{$\proofvdots{\pi(0, 1, 0, 1)}$}
\noLine\UnaryInfC{$\Gamma \vdash A$}
\RightLabel{\scriptsize $\& R$}
\TrinaryInfC{$\tBint_A, 2\, \tBool_A, {!}E, {!}E, A  \vdash A \& A \& A$}
\AxiomC{}
\UnaryInfC{$A \vdash A$}
\RightLabel{\scriptsize $\multimap L$}
\BinaryInfC{$\tBint_A, 2\, \tBool_A, {}_3 \tBool_A, {!}E, {!}E, A \vdash A$}
\doubleLine\RightLabel{\scriptsize $3\times \multimap R$}
\UnaryInfC{$\tBint_A, 2\, \tBool_A, {}_3 \tBool_A \vdash \tBint_A$}
\DisplayProof
\end{center}
and ${^1}\pRecomb_A$ is similar.
\end{proof}

From these ingredients we construct ${}_\delta \pStep_A: \tTur_{A^3} \vdash \tTur_A$ as before.

\section{Ints, Bints and Bools}\label{section:ints_and_bints}

In this appendix we develop the basic properties of denotations of integers, binary integers and booleans in the Sweedler semantics. The denotations of integers and booleans are easily seen to be linearly independent, but binary integers are more interesting.

\begin{proposition} \label{prop-dntnsOfIntsAreLinIndep}
Let $A$ be a formula with $\dim\dntn{A} > 0$. Then
\begin{itemize}
\item[(i)] The set $\{\dntn{\underline{n}_A}\}_{n \geq 0}$ is linearly independent in $\dntn{\tInt_A}$.
\item[(ii)] The set $\{\dntn{\underline{i}_A}\}_{i = 0}^{n-1}$ is linearly independent in $\dntn{{_n} \tBool_A}$.
\end{itemize}
\end{proposition}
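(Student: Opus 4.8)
The plan is to prove both statements by examining the denotations explicitly as linear maps, and to reduce linear independence to a combinatorial statement about distinguishing the proofs $\underline{n}_A$ (resp.\ $\underline{i}_A$) using well-chosen test inputs. The second statement is the easy one: the denotation $\dntn{\underline{i}_A}$ is the projection $\proj_i : \dntn{A}^{\oplus n} \to \dntn{A}$, and these are manifestly linearly independent whenever $\dntn{A} \neq 0$ — if $\sum_i \lambda_i \proj_i = 0$ then evaluating on the tuple $(0,\ldots,0,a,0,\ldots,0)$ with a fixed nonzero $a \in \dntn{A}$ in the $i$th slot gives $\lambda_i a = 0$, hence $\lambda_i = 0$. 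So (ii) takes only a line or two.

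For (i), first I would recall from \cite[\S 3.1]{clift_murfet} the description of $\dntn{\underline{n}_A}$: it is the element of $\dntn{\tInt_A} = \Hom_k\big({!}\Hom_k(\dntn{A},\dntn{A}), \Hom_k(\dntn{A},\dntn{A})\big)$ which sends $\vacu_\varphi$ (the vacuum over a linear map $\varphi : \dntn{A}\to\dntn{A}$, i.e.\ the group-like element) to $\varphi^n$. The key step is then to choose, in a space $\dntn{A}$ of dimension $\geq 1$ — and in fact I suspect dimension $1$ suffices here, with $\dntn{A} = k$ and $\varphi$ multiplication by a scalar $t$ — a map $\varphi$ so that the scalars $\{t^n\}_{n\ge 0}$ are pairwise distinct; taking $t$ transcendental (or just any element of infinite multiplicative order, which exists since $k$ is algebraically closed, hence infinite) does this. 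Then a finite linear relation $\sum_{n} \lambda_n \dntn{\underline{n}_A} = 0$, evaluated at $\vacu_\varphi$ and then at a nonzero vector of $\dntn{A}$, yields a polynomial relation $\sum_n \lambda_n t^n = 0$ with infinitely many... no — rather, I would instead note that distinct powers $t^{n_1},\ldots,t^{n_r}$ for the finitely many $n$ appearing are distinct nonzero scalars, and a Vandermonde argument (or: vary $t$ over infinitely many values, getting a nonzero polynomial with infinitely many roots) forces all $\lambda_n = 0$. Concretely: if $\lambda_{n_1},\ldots,\lambda_{n_r}$ are the nonzero coefficients, then $\sum_{j=1}^r \lambda_{n_j} t^{n_j} = 0$ must hold for every scalar $t$, but the left side is a nonzero polynomial in $t$ of degree $\le \max_j n_j$, contradicting that $k$ is infinite.

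The main obstacle, such as it is, is bookkeeping rather than mathematical depth: one must be careful that the evaluation $\vacu_\varphi \mapsto \varphi^n$ is the correct reading of the denotation of the Church numeral in this particular semantics (the promotion/dereliction structure of $\tInt_A = {!}(A\multimap A)\multimap(A\multimap A)$ means the argument is a coalgebra element, and group-likes are exactly the vacuum vectors $\vacu_\varphi$), and that evaluating a linear functional on $\dntn{\tInt_A}$ at a single group-like element plus a vector of $\dntn{A}$ is enough to separate the $\underline{n}_A$. Both points are already implicit in \cite{clift_murfet}, so I would simply cite the formula for $\dntn{\underline{n}_A}$ and proceed. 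An alternative, avoiding any appeal to $k$ being infinite, is to evaluate at several distinct group-like elements $\vacu_{\varphi_1},\ldots$ with diagonalizable $\varphi_i$ having distinct eigenvalue patterns and run a finite Vandermonde argument; but the infinite-field route is cleaner and $k$ is algebraically closed by standing hypothesis, so that is the route I would take.
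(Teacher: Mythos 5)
Your proposal is correct and follows essentially the same route as the paper: both settle (ii) by observing the denotations are projections, and both settle (i) by evaluating the putative relation $\sum_n \lambda_n \dntn{\underline{n}_A}$ on group-like elements $\vacu_\alpha$ (using $\dntn{\underline{n}_A}\vacu_\alpha = \alpha^n$) and reducing to the vanishing of a one-variable polynomial. Your specialization to $\alpha = t\cdot\mathrm{id}$ with $t$ ranging over the infinite field $k$ is a slightly cleaner way to finish than the paper's appeal to minimal polynomials and characteristic zero, but the substance is the same.
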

\begin{proof}
For (i) suppose that $\sum_{i=0}^n c_i \dntn{\underline{i}} = 0$ for some scalars $c_0, ..., c_n \in \kk$. Let $p \in \kk[x]$ be the polynomial $p = \sum_{i=0}^n c_i x^i$. For $\alpha \in \dntn{A \multimap A}$ we have
\[p(\alpha) = \sum_{i=0}^n c_i \alpha^i = \sum_{i=0}^n c_i \dntn{\underline{i}}\vacu_\alpha  = 0.\]
The minimal polynomial of $\alpha$ therefore divides $p$. Since this holds for any linear map $\alpha \in \dntn{A \multimap A}$, it follows that $p$ is identically zero, as $\kk$ is characteristic zero. The claim in (ii) is clear since the denotations $\dntn{\underline{i}}_A$ are projections from $\dntn{A}^{\oplus n}$.
\end{proof}

\begin{corollary}
The function $\NN \to \dntn{\tInt_A}$ given by $n \mapsto \dntn{\underline{n}_A}$ is injective.
\end{corollary}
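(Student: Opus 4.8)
The plan is to deduce this immediately from part (i) of Proposition \ref{prop-dntnsOfIntsAreLinIndep}, which asserts that the family $\{\dntn{\underline{n}_A}\}_{n \ge 0}$ is linearly independent in $\dntn{\tInt_A}$ (under the standing hypothesis $\dim\dntn{A} > 0$). A linearly independent family in a vector space is in particular a set of pairwise distinct vectors, since any repetition among the $\dntn{\underline{n}_A}$ would produce a nontrivial linear relation.

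Concretely, suppose $m \ne n$ are non-negative integers with $\dntn{\underline{m}_A} = \dntn{\underline{n}_A}$. Then $1\cdot\dntn{\underline{m}_A} + (-1)\cdot\dntn{\underline{n}_A} = 0$ is a linear dependence with not all coefficients zero among the distinct proofs $\underline{m}_A$ and $\underline{n}_A$, contradicting Proposition \ref{prop-dntnsOfIntsAreLinIndep}(i). Hence the assignment $n \mapsto \dntn{\underline{n}_A}$ is injective.

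I do not anticipate any real obstacle here: the only subtlety is that the corollary presupposes $\dim\dntn{A} > 0$, which is exactly the hypothesis under which Proposition \ref{prop-dntnsOfIntsAreLinIndep} was stated, so the implication is purely formal once that proposition is in hand. The proof can therefore be given in a single sentence referring back to the proposition.

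\begin{proof}
This is immediate from Proposition \ref{prop-dntnsOfIntsAreLinIndep}(i): a linearly independent family consists of pairwise distinct vectors, so if $\dntn{\underline{m}_A} = \dntn{\underline{n}_A}$ with $m \ne n$ then $\dntn{\underline{m}_A} - \dntn{\underline{n}_A} = 0$ would be a nontrivial linear relation among the $\{\dntn{\underline{k}_A}\}_{k \ge 0}$, a contradiction.
\end{proof}
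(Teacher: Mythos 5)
Your proof is correct and matches the paper, which states this corollary as an immediate consequence of Proposition \ref{prop-dntnsOfIntsAreLinIndep}(i) with no further argument: linear independence of the indexed family $\{\dntn{\underline{n}_A}\}_{n\ge 0}$ forces the denotations to be pairwise distinct. Your explicit remark that the corollary inherits the hypothesis $\dim\dntn{A}>0$ is a fair observation and does not change the argument.
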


We now investigate the question of whether binary integers have linearly independent denotations. Surprisingly this does not hold in general. In fact, for an atomic formula $A$ it is impossible to choose $\dntn{A}$ finite-dimensional such that all binary integers are linearly independent in $\dntn{\tBint_A}$.

For a binary integer $T$, one can write the value of $\dntn{\underline{T}_A}$ on arbitrary kets as a sum of its values on vacuum vectors. This will simplify the task of checking whether binary integers have linearly dependent denotations; at least in the case where we have a fixed number of zeroes and ones, we only need to check linear dependence after evaluating on vacuum vectors. From this point onward let $A$ be a fixed type and write $\dntn{\underline{T}}$ for $\dntn{\underline{T}_A}$.

\begin{proposition}\label{prop-denotationsOfBints}
For $T \in \{0,1\}^n$, let $t_i$ be the $i$th digit of $T$ and let $T_0 = \{ i \l t_i = 0\}$ and $T_1 = \{ i \l t_i = 1\}$. Then
\[
\dntn{\underline{T}_A}(\ket{\alpha_1, ..., \alpha_k}_{\gamma}\otimes \ket{\beta_1, ..., \beta_l}_{\delta}) = \Sum_{f \in \Inj([k], T_0)}\Sum_{g \in \Inj([l], T_1)} \Gamma_{n}^{f,g} \circ \cdots \circ \Gamma_{1}^{f,g}.
\]
where 
\[ 
\Gamma^{f,g}_i = \begin{cases}
\alpha_{f^{-1}(i)} & i \in \im(f) \\
\beta_{g^{-1}(i)} & i \in \im(g) \\
\gamma & i \in T_0 \setminus \im(f) \\
\delta & i \in T_1 \setminus \im(g). 
\end{cases}
\] 
In particular, $\dntn{\underline{T}_A}$ vanishes on $\ket{\alpha_1, ..., \alpha_k}_{\gamma}\otimes \ket{\beta_1, ..., \beta_l}_{\delta}$ if $k > |T_0|$ or $l > |T_1|$.
\end{proposition}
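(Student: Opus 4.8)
The plan is to use the fact that $\underline{T}_A$ is a plain proof, so that $\dntn{\underline{T}_A}$ factors through iterated comultiplication followed by dereliction, and then to unwind the combinatorics of $\Delta$ and $d$ on a ket.

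First I would recall from \cite[\S 3.2]{clift_murfet} the explicit shape of $\underline{T}_A$. Writing the positions with $t_i = 0$ as the set $T_0$ and those with $t_i = 1$ as $T_1$, the proof $\underline{T}_A$ is plain with degree vector $(|T_0|,|T_1|)$, built from a proof $\pi$ of $|T_0|\,(A\multimap A), |T_1|\,(A\multimap A)\vdash A\multimap A$ whose denotation composes its $|T_0|+|T_1|$ inputs in the order corresponding to $\Gamma_n^{f,g}\circ\cdots\circ\Gamma_1^{f,g}$, placing one copy of the $0$-slot input at each index of $T_0$ and one copy of the $1$-slot input at each index of $T_1$ (which copy is placed where is immaterial for the final formula, since we sum over all injections below). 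Since the denotation of dereliction is $d$, that of contraction is $\Delta$, and that of weakening is the counit, this gives
\[
\dntn{\underline{T}_A} = \dntn{\pi}\circ\big( (d^{\otimes |T_0|}\circ \Delta^{|T_0|-1})\otimes (d^{\otimes |T_1|}\circ\Delta^{|T_1|-1})\big),
\]
where $\Delta^{m-1}\colon {!}V\to ({!}V)^{\otimes m}$ is the iterated comultiplication (and $\Delta^{-1}$ is read as the counit).

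The key computation is then the effect of $d^{\otimes m}\circ\Delta^{m-1}$ on a ket $\ket{v_1,\ldots,v_s}_P$. Iterating the formula for the comultiplication on ${!}V$ gives $\Delta^{m-1}\ket{v_1,\ldots,v_s}_P = \sum \ket{v_{I_1}}_P\otimes\cdots\otimes\ket{v_{I_m}}_P$, summed over all ways of partitioning $\{1,\ldots,s\}$ into an ordered tuple of $m$ (possibly empty) blocks $I_1,\ldots,I_m$; applying $d$ to the $j$th factor returns $P$ when $I_j=\emptyset$, returns $v_i$ when $I_j=\{i\}$, and annihilates the term when $|I_j|\ge 2$. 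Hence the surviving terms correspond bijectively to injections $\phi\colon\{1,\ldots,s\}\to\{1,\ldots,m\}$ (with $I_j=\phi^{-1}(j)$), and the $j$th tensor factor of $d^{\otimes m}\Delta^{m-1}\ket{v_1,\ldots,v_s}_P$ equals $v_{\phi^{-1}(j)}$ if $j\in\im\phi$ and $P$ otherwise. In particular this expression is zero when $s>m$.

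Finally I would substitute $(m,s,v,P)=(|T_0|,k,\alpha,\gamma)$ for the first tensor factor and $(|T_1|,l,\beta,\delta)$ for the second, feed the resulting tuple of linear maps into $\dntn{\pi}$, and re-index the injections $[k]\to[|T_0|]$ and $[l]\to[|T_1|]$ through the order-preserving bijections $[|T_0|]\cong T_0$ and $[|T_1|]\cong T_1$. The map landing at position $i$ is then exactly $\Gamma_i^{f,g}$ for the corresponding $f\in\Inj([k],T_0)$, $g\in\Inj([l],T_1)$, giving the stated formula; the last sentence is immediate, since $\Inj([k],T_0)=\emptyset$ when $k>|T_0|$ and similarly for $l$. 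The only point requiring care is the bookkeeping that matches the order of composition built into $\dntn{\pi}$ with the indexing of the $\Gamma_i^{f,g}$; the rest is a direct unwinding of the definitions of $\Delta$ and $d$.
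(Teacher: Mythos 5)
Your computation is correct and takes essentially the same route as the paper, which proves this proposition simply by remarking that it ``follows in the same way as'' \cite[Lemma 3.11]{clift_murfet} --- that is, exactly the unwinding you give of contraction as iterated comultiplication and dereliction as $d$, with the surviving terms indexed by injections (kets with blocks of size $\ge 2$ being annihilated, empty blocks contributing $\gamma$ or $\delta$). The bookkeeping points you flag (independence, by cocommutativity, of how the contracted copies are matched to the positions in $T_0$ and $T_1$, and the vanishing when $k>|T_0|$ or $l>|T_1|$ because $\Inj([k],T_0)$ is then empty) are handled just as you describe.
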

\begin{proof}
This follows in the same way as \cite[Lemma 3.11]{clift_murfet}.
\end{proof}

\begin{proposition}
Let $m,n \geq 0$, and let $T \in \{0,1\}^*$ contain exactly $m$ zeroes and $n$ ones. Then for $\alpha_i, \beta_i, \gamma, \delta \in \dntn{A \multimap A}$ we have:
{\small\[ 
\dntn{\underline{T}}(\ket{\alpha_1, ..., \alpha_m}_{\gamma}, \ket{\beta_1, ..., \beta_n}_{\delta}) = \sum_{I \subseteq [m]}\Sum_{J \subseteq [n]}  (-1)^{m-|I|}(-1)^{n-|J|} \dntn{\underline{T}}\left(\vacu_{\Sum_{i \in I} \alpha_i}, \vacu_{\Sum_{j \in J} \beta_j}\right).\tag{$*$}
\]}
Note that the right hand side does not depend on $\gamma$ and $\delta$.
\end{proposition}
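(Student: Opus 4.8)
The plan is to deduce this from Proposition \ref{prop-denotationsOfBints} by a finite-difference (inclusion--exclusion) argument, using that $T$ has exactly $m = |T_0|$ zeroes and $n = |T_1|$ ones. Throughout write $N = m+n$ for the length of $T$; the ``composites'' below always have $N$ factors $\Gamma_N \circ \cdots \circ \Gamma_1$.

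First I would rewrite the left hand side. Applying Proposition \ref{prop-denotationsOfBints} with $k = m$ and $l = n$, any $f \in \Inj([m], T_0)$ and $g \in \Inj([n], T_1)$ is automatically a bijection, so $T_0 \setminus \im(f) = T_1 \setminus \im(g) = \emptyset$ and neither $\gamma$ nor $\delta$ appears. Reparametrising by the inverse bijections, the left hand side becomes $\sum_{a, b} \Lambda^{a,b}_N \circ \cdots \circ \Lambda^{a,b}_1$, where $a$ ranges over bijections $T_0 \to [m]$, $b$ over bijections $T_1 \to [n]$, and $\Lambda^{a,b}_i = \alpha_{a(i)}$ for $i \in T_0$ and $\Lambda^{a,b}_i = \beta_{b(i)}$ for $i \in T_1$. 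This also shows the left hand side does not involve $\gamma, \delta$, consistent with the parenthetical remark.

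Next I would expand the right hand side. For fixed $I \subseteq [m]$, $J \subseteq [n]$, Proposition \ref{prop-denotationsOfBints} with $k = l = 0$ gives $\dntn{\underline{T}}(\vacu_{\Sum_{i \in I}\alpha_i}, \vacu_{\Sum_{j \in J}\beta_j}) = \Theta_N \circ \cdots \circ \Theta_1$ with $\Theta_i = \Sum_{i' \in I}\alpha_{i'}$ for $i \in T_0$ and $\Theta_i = \Sum_{j' \in J}\beta_{j'}$ for $i \in T_1$; expanding by multilinearity of composition rewrites this as $\sum_{a : T_0 \to I}\sum_{b : T_1 \to J}\Lambda^{a,b}_N \circ \cdots \circ \Lambda^{a,b}_1$ in the notation above. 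Substituting this into the right hand side of $(*)$ and regrouping the (finite) sum according to the induced functions $a : T_0 \to [m]$, $b : T_1 \to [n]$, the composite $\Lambda^{a,b}_N \circ \cdots \circ \Lambda^{a,b}_1$ occurs with total coefficient
\[
\Big(\Sum_{\im(a) \subseteq I \subseteq [m]} (-1)^{m - |I|}\Big)\Big(\Sum_{\im(b) \subseteq J \subseteq [n]}(-1)^{n - |J|}\Big).
\]

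Finally I would evaluate these signed sums: writing $I = \im(a) \sqcup K$ with $K \subseteq [m]\setminus\im(a)$ shows the first factor equals $(-1)^{m - |\im(a)|}(1-1)^{m - |\im(a)|}$, which is $1$ when $a$ is surjective (hence a bijection, since $|T_0| = m$) and $0$ otherwise, and symmetrically for the second factor. Hence only pairs of bijections $(a,b)$ survive, each with coefficient $1$, and the right hand side collapses to exactly the expression for the left hand side obtained in the first step. I do not expect a real obstacle; the only delicate point is the bookkeeping when interchanging the two layers of summation (over subsets $I,J$ and over functions $a,b$) so that the inclusion--exclusion cancellation is applied to the right index sets.
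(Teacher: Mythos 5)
Your proposal is correct and follows essentially the same route as the paper: both sides are expanded via Proposition \ref{prop-denotationsOfBints} and the signed sum over $I,J$ is killed by inclusion--exclusion, leaving exactly the bijection terms. The only difference is presentational — you index surviving terms by functions $a\colon T_0\to[m]$, $b\colon T_1\to[n]$ and evaluate the alternating sum as $(-1)^{m-|\im(a)|}(1-1)^{m-|\im(a)|}$, whereas the paper computes the coefficient of each noncommutative monomial via a double binomial sum — and this is a sound, slightly more explicit bookkeeping of the same cancellation.
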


\begin{proof}
A single term of the double sum on the right hand side of $(*)$ corresponds to replacing (in the reversal of $T$) each $0$ with $\sum_i \alpha_i$ and each $1$ with $\sum_j \beta_j$. After expanding these sums, consider the coefficient of a particular noncommutative monomial $p$ which contains only the variables $\alpha_{i_1}, ..., \alpha_{i_k}, \beta_{j_1}, ..., \beta_{j_l}$, where $\{i_1, ..., i_k\} \subseteq [m]$ and $\{j_1, ..., j_l\} \subseteq [n]$. The terms in the right hand side of $(*)$ which contribute to this coefficient are precisely those for which the set $I$ contains each of $i_1, ..., i_k$, and $J$ contains each of $j_1, ..., j_l$. Hence the coefficient of $p$ is
\begin{align*}
\sum_{\substack{\{i_1, ..., i_k\} \subseteq I \subseteq [m] \\ \{j_1, ..., j_l\} \subseteq J \subseteq [n]}} (-1)^{m-|I|}(-1)^{n-|J|}
&= \sum_{i=k}^{m}\sum_{j=l}^{n} (-1)^{m-i}(-1)^{n-j} \binom{m-k}{i-k}\binom{n-l}{j-l} 
\\&= \sum_{i=k}^{m} (-1)^{m-i}\binom{m-k}{i-k}\sum_{j=l}^{n}(-1)^{n-j} \binom{n-l}{j-l} 
\\&= \begin{cases}1 & m = k \text{ and } n = l \\ 0 & \text{otherwise}.\end{cases}
\end{align*}
So the only monomials $p$ with non-zero coefficient are those which contain each $\alpha_i$ and each $\beta_j$ exactly once, which agrees with the left hand side of $(*)$ by Proposition \ref{prop-denotationsOfBints}.
\end{proof}

In order to make the content of the above proposition clear, we give a concrete example involving a specific binary sequence.
\begin{example}
Let $T = 0010$. The left hand side of $(*)$ is therefore
\[
\sum_{\sigma \in S_3} \alpha_{\sigma(1)}\beta_1\alpha_{\sigma(2)}\alpha_{\sigma(3)}
\]
while the right hand side is
\begin{align*}
&(\alpha_1 + \alpha_2 + \alpha_3)\beta_1(\alpha_1 + \alpha_2 + \alpha_3)(\alpha_1 + \alpha_2 + \alpha_3) - (\alpha_1 + \alpha_2)\beta_1(\alpha_1 + \alpha_2)(\alpha_1 + \alpha_2) - \\ &(\alpha_1 + \alpha_3)\beta_1(\alpha_1 + \alpha_3)(\alpha_1 + \alpha_3) - (\alpha_2 + \alpha_3)\beta_1(\alpha_2 + \alpha_3)(\alpha_2 + \alpha_3) + \\&\alpha_1 \beta_1 \alpha_1 \alpha_1 + \alpha_2 \beta_1 \alpha_2 \alpha_2 + \alpha_3 \beta_1 \alpha_3 \alpha_3.
\end{align*}
After expanding the above, consider for example the monomial $\alpha_{1} \beta_1 \alpha_{1} \alpha_{1}$:
\begin{align*}
&(\color{red}\alpha_1\color{black} + \alpha_2 + \alpha_3)\color{red}\beta_1\color{black}(\color{red}\alpha_1\color{black} + \alpha_2 + \alpha_3)(\color{red}\alpha_1\color{black} + \alpha_2 + \alpha_3) - (\color{red}\alpha_1\color{black} + \alpha_2)\color{red}\beta_1\color{black}(\color{red}\alpha_1\color{black} + \alpha_2)(\color{red}\alpha_1\color{black} + \alpha_2) - \\ &(\color{red}\alpha_1\color{black} + \alpha_3)\color{red}\beta_1\color{black}(\color{red}\alpha_1\color{black} + \alpha_3)(\color{red}\alpha_1\color{black} + \alpha_3) - (\alpha_2 + \alpha_3)\beta_1(\alpha_2 + \alpha_3)(\alpha_2 + \alpha_3) + \\&\color{red}\alpha_1\color{black} \color{red}\beta_1\color{black}\color{red}\alpha_1\color{black} \color{red}\alpha_1\color{black} + \alpha_2 \beta_1 \alpha_2 \alpha_2 + \alpha_3 \beta_1 \alpha_3 \alpha_3.
\end{align*}
As expected, the coefficient is $\binom{2}{2} - \binom{2}{1} + \binom{2}{0} = 1 - 2 + 1 = 0$.
\end{example}

\begin{corollary}\label{corol-evaluation-of-bints-on-kets-can-be-written-as-sum-on-vacuums}
Let $m,n \geq 0$, let $T \in \{0,1\}^*$ contain exactly $m$ zeroes and $n$ ones, and let $0 \leq k \leq m$ and $0 \leq l \leq n$. Then
\[ 
\dntn{\underline{T}}(\ket{\alpha_1, ..., \alpha_k}_{\gamma}, \ket{\beta_1, ..., \beta_l}_{\delta}) = \Sum_{I \subseteq [m]} \Sum_{J \subseteq [n]}  \frac{(-1)^{m-|I|}(-1)^{n-|J|}}{(m-k)!(n-l)!}\dntn{\underline{T}}\left(\vacu_{\Sum_{i \in I} \alpha_i}, \vacu_{\Sum_{j \in J} \beta_j}\right),
\] where on the right hand side we define $\alpha_i = \gamma$ for $i > k$ and $\beta_j = \delta$ for $j > l$.
\end{corollary}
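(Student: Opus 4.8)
The plan is to deduce the corollary from the identity $(*)$ established above by a padding argument: an ``absent'' argument can be simulated by an argument equal to the relevant basepoint, at the cost of a factorial factor. The key lemma I would establish first is that for any $T$ with exactly $m$ zeroes and $n$ ones and any $\alpha_1,\ldots,\alpha_k,\beta_1,\ldots,\beta_l,\gamma,\delta\in\dntn{A\multimap A}$, writing the left ket with $m-k$ trailing copies of $\gamma$ and the right ket with $n-l$ trailing copies of $\delta$,
\[
\dntn{\underline{T}}\big(\ket{\alpha_1,\ldots,\alpha_k,\gamma,\ldots,\gamma}_\gamma,\ket{\beta_1,\ldots,\beta_l,\delta,\ldots,\delta}_\delta\big)=(m-k)!\,(n-l)!\;\dntn{\underline{T}}\big(\ket{\alpha_1,\ldots,\alpha_k}_\gamma,\ket{\beta_1,\ldots,\beta_l}_\delta\big).
\]
This follows directly from Proposition \ref{prop-denotationsOfBints}. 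Since $|T_0|=m$ and $|T_1|=n$, the left-hand side is the sum over \emph{bijections} $f\colon[m]\to T_0$ and $g\colon[n]\to T_1$ of the composite $\Gamma^{f,g}_{|T|}\circ\cdots\circ\Gamma^{f,g}_1$. Restricting $f$ to $[k]$ and $g$ to $[l]$ gives injections $f'\in\Inj([k],T_0)$ and $g'\in\Inj([l],T_1)$; because the padding arguments equal $\gamma$ and $\delta$, a short check shows $\Gamma^{f,g}_i$ depends only on $(f',g')$, equalling $\alpha_{(f')^{-1}(i)}$ on $\im f'$, $\beta_{(g')^{-1}(i)}$ on $\im g'$, $\gamma$ on $T_0\setminus\im f'$ and $\delta$ on $T_1\setminus\im g'$ --- which is exactly the integrand that Proposition \ref{prop-denotationsOfBints} associates to $\dntn{\underline{T}}(\ket{\alpha_1,\ldots,\alpha_k}_\gamma,\ket{\beta_1,\ldots,\beta_l}_\delta)$. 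Grouping the sum over $(f,g)$ by $(f',g')$, each pair $(f',g')$ is hit exactly $(m-k)!\,(n-l)!$ times (the number of bijective extensions of $f'$ to $[m]$ times those of $g'$ to $[n]$), which yields the lemma.

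With the lemma in hand the corollary is immediate. I would apply $(*)$ with the $m$ arguments $\alpha_1,\ldots,\alpha_k,\gamma,\ldots,\gamma$ on the left (i.e. $\alpha_i:=\gamma$ for $k<i\le m$) and the $n$ arguments $\beta_1,\ldots,\beta_l,\delta,\ldots,\delta$ on the right ($\beta_j:=\delta$ for $l<j\le n$). The right-hand side of $(*)$ then becomes precisely $\sum_{I\subseteq[m]}\sum_{J\subseteq[n]}(-1)^{m-|I|}(-1)^{n-|J|}\dntn{\underline{T}}\big(\vacu_{\Sum_{i\in I}\alpha_i},\vacu_{\Sum_{j\in J}\beta_j}\big)$ under the convention in the statement, and the left-hand side of $(*)$ equals $(m-k)!\,(n-l)!\,\dntn{\underline{T}}\big(\ket{\alpha_1,\ldots,\alpha_k}_\gamma,\ket{\beta_1,\ldots,\beta_l}_\delta\big)$ by the lemma. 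Since $\kk$ has characteristic zero we may divide by $(m-k)!\,(n-l)!$ to obtain the asserted identity; the case $k=m$, $l=n$ recovers $(*)$ since then the factor is $1$.

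I expect the only delicate point to be the combinatorial bookkeeping in the lemma: correctly computing the fibres of the restriction map from bijections $[m]\to T_0$ to injections $[k]\to T_0$, and verifying that the composite $\Gamma^{f,g}_{|T|}\circ\cdots\circ\Gamma^{f,g}_1$ is constant on each fibre. The remaining steps are formal substitutions into already-established identities, and the degenerate cases ($k=0$, $l=0$, or $m=n=0$) need no separate treatment.
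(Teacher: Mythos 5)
Your proposal is correct and follows essentially the same route as the paper: pad the kets with $(m-k)$ copies of $\gamma$ and $(n-l)$ copies of $\delta$, note via Proposition \ref{prop-denotationsOfBints} that this multiplies the value by $(m-k)!\,(n-l)!$, and then apply the identity $(*)$ to the padded arguments. Your fibre-counting argument for the factorial factor is just a more explicit spelling-out of the step the paper leaves implicit.
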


\begin{proof}
Note that by Proposition \ref{prop-denotationsOfBints}\[\dntn{\underline{T}}(\ket{\alpha_1, ..., \alpha_k}_{\gamma}, \ket{\beta_1, ..., \beta_l}_{\delta}) = \frac{\dntn{\underline{T}}(\ket{\alpha_1, ..., \alpha_k, \gamma, ..., \gamma}_{\gamma}, \ket{\beta_1, ..., \beta_l, \delta, ..., \delta}_{\delta})}{(m-k)!(n-l)!},\] where the kets on the right have length $m$ and $n$ respectively. Then apply the previous proposition.
\end{proof}

\begin{lemma} \label{lemma-bints-lincomb-equalzero-is-equiv-to-bints-lincomb-on-vacuums-equalzero}
Let $T_1, ..., T_r \in \{0,1\}^*$ each contain exactly $m$ zeroes and $n$ ones, and let $c_1, ..., c_r \in \kk \setminus\{0\}$. Then $\sum_s c_s \dntn{\underline{T_s}} = 0$ in $\dntn{\tBint_A}$ if and only if $\sum_s c_s\dntn{\underline{T_s}}(\vacu_{\alpha}, \vacu_{\beta}) = 0$ in $\dntn{A \multimap A}$ for all $\alpha, \beta \in \dntn{A \multimap A}$.
\end{lemma}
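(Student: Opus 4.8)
The plan is to prove the two implications separately; the forward direction is immediate and essentially all the content lies in the converse. For the ``only if'' direction: if $\sum_s c_s \dntn{\underline{T_s}}$ is the zero element of $\dntn{\tBint_A}$, then it vanishes on every argument, in particular on $\vacu_\alpha \otimes \vacu_\beta$ for all $\alpha, \beta \in \dntn{A \multimap A}$. Nothing else is needed.

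For the converse, I would first reduce to checking values on kets. An element of $\dntn{\tBint_A}$ is zero as soon as it vanishes on all vectors of the form $\ket{\alpha_1,\ldots,\alpha_k}_\gamma \otimes \ket{\beta_1,\ldots,\beta_l}_\delta$, since these span the relevant tensor product of copies of $!\dntn{A\multimap A}$, so it suffices to show $\sum_s c_s \dntn{\underline{T_s}}$ vanishes on an arbitrary such ket. Here I would split into two cases. If $k > m$ or $l > n$, then by the final assertion of Proposition \ref{prop-denotationsOfBints} each individual $\dntn{\underline{T_s}}$ already vanishes on this ket — using crucially that every $T_s$ has exactly $m$ zeroes and $n$ ones — so the linear combination does too. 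If $k \le m$ and $l \le n$, I would apply Corollary \ref{corol-evaluation-of-bints-on-kets-can-be-written-as-sum-on-vacuums} uniformly to all the $T_s$ with this common pair $(m,n)$, which rewrites each $\dntn{\underline{T_s}}$ evaluated on the ket as a fixed linear combination (with coefficients depending only on $m,n,k,l$, not on $s$) of the numbers $\dntn{\underline{T_s}}\bigl(\vacu_{\Sum_{i\in I}\alpha_i}, \vacu_{\Sum_{j\in J}\beta_j}\bigr)$, where $\alpha_i = \gamma$ for $i>k$ and $\beta_j = \delta$ for $j>l$. Multiplying by $c_s$, summing over $s$, and interchanging the order of summation, each inner sum $\sum_s c_s \dntn{\underline{T_s}}(\vacu_{\cdot},\vacu_{\cdot})$ vanishes by hypothesis, so the whole expression is zero; hence $\sum_s c_s \dntn{\underline{T_s}} = 0$.

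The only real ingredient is Corollary \ref{corol-evaluation-of-bints-on-kets-can-be-written-as-sum-on-vacuums}, and the one point that needs a little care is that its formula is valid only for kets of bounded length ($k\le m$, $l\le n$); the unbounded case must be peeled off first, which is exactly what the vanishing clause of Proposition \ref{prop-denotationsOfBints} provides. I do not anticipate any genuine obstacle — the argument is a short reduction once these two results are in hand.
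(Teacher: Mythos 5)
Your proposal is correct and follows essentially the same route as the paper: the forward direction is immediate, and the converse is exactly the application of Corollary \ref{corol-evaluation-of-bints-on-kets-can-be-written-as-sum-on-vacuums} to rewrite evaluation on an arbitrary ket as a fixed linear combination of evaluations on vacuum vectors, then interchange the sums. Your explicit treatment of kets with $k>m$ or $l>n$ via the vanishing clause of Proposition \ref{prop-denotationsOfBints} is a small point the paper leaves implicit, but it is the same argument.
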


\begin{proof}
$(\Rightarrow)$ is immediate. For $(\Leftarrow)$, suppose that $\sum_{s=1}^r c_s\dntn{\underline{T_s}}(\vacu_{\alpha}, \vacu_{\beta}) = 0$ for all $\alpha, \beta$. It follows from Corollary \ref{corol-evaluation-of-bints-on-kets-can-be-written-as-sum-on-vacuums} that
\begin{align*}
&\phantom{=\,} \sum_{s=1}^r c_s \dntn{\underline{T_s}}(\ket{\alpha_1, ..., \alpha_k}_{\gamma}, \ket{\beta_1, ..., \beta_l}_{\delta})
\\&= \Sum_{I \subseteq [m]}\Sum_{J \subseteq [n]}\frac{(-1)^{m-|I|} (-1)^{n-|J|}}{(m-k)!(n-l)!} \sum_{s=1}^r c_s \dntn{\underline{T_s}}\left(\vacu_{\Sum_{i \in I} \alpha_i}, \vacu_{\Sum_{j \in J} \beta_j}\right)
\\&= \Sum_{I \subseteq [m]}\Sum_{J \subseteq [n]}\frac{(-1)^{m-|I|} (-1)^{n-|J|}}{(m-k)!(n-l)!} \cdot 0 \\&=0,
\end{align*}
where we again define $\alpha_i = \gamma$ for $i>k$ and $\beta_j = \delta$ for $j>l$.
\end{proof}

Suppose that $A$ is a formula with $\dim\dntn{A} = n < \infty$. By the above lemma, the existence of distinct binary integers with linearly dependent denotations reduces to the task of finding a non-zero noncommutative polynomial $t_n(x,y)$ such that $t_n(\alpha, \beta) = 0$ for all $n \times n$ matrices $\alpha, \beta \in \dntn{A \multimap A}$. To describe such a polynomial, we will require the following theorem.

\begin{theorem}
\textbf{(Amitsur-Levitzki Theorem.)} For $n \in \NN$, let $\kk\<x_1, ..., x_n\>$ denote the ring of noncommutative polynomials in $n$ variables, and let $s_n \in \kk\<x_1, ..., x_n\>$ be the polynomial
\[
s_n = \sum_{\sigma \in S_{n}} \sgn(\sigma) x_{\sigma(1)} \cdots x_{\sigma(n)}.
\]
Then for all $\alpha_1, ..., \alpha_{2n} \in M_n(\kk)$, we have $s_{2n}(\alpha_1, ..., \alpha_{2n}) = 0$. Furthermore, $M_n(\kk)$ does not satisfy any polynomial identity of degree less than $2n$. 
\end{theorem}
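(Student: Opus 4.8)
The plan is to prove the vanishing of $s_{2n}$ on $M_n(\kk)$ by Rosset's exterior-algebra argument, and the sharpness statement by the classical ``staircase'' substitution of matrix units. Let $\Lambda=\Lambda(e_1,\ldots,e_{2n})$ denote the exterior $\kk$-algebra on $2n$ generators and $\Lambda^+$ its even part, a \emph{commutative} $\kk$-algebra. Given $\alpha_1,\ldots,\alpha_{2n}\in M_n(\kk)$, form the element
\[
B=\sum_{i=1}^{2n} e_i\otimes\alpha_i\in\Lambda\otimes_\kk M_n(\kk)\cong M_n(\Lambda).
\]
Everything follows from three observations: (a) the entries of $B^2$ lie in $\Lambda^+$, so $B^2$ is a matrix over the commutative ring $\Lambda^+$; (b) $\Tr(B^{2j})=0$ for every $j\ge 1$; and (c) $B^{2n}=(e_1\wedge\cdots\wedge e_{2n})\otimes s_{2n}(\alpha_1,\ldots,\alpha_{2n})$.

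To establish (b) and (c) I would expand $B^m=\sum_{k_1,\ldots,k_m}e_{k_1}\cdots e_{k_m}\otimes\alpha_{k_1}\cdots\alpha_{k_m}$, whence $\Tr(B^m)=\sum_{k_1,\ldots,k_m}e_{k_1}\cdots e_{k_m}\,\Tr(\alpha_{k_1}\cdots\alpha_{k_m})$. A summand vanishes unless the indices $k_i$ are pairwise distinct, and on such tuples the cyclic group $\ZZ/m\ZZ$ acts freely by rotation. For $m=2j$ even, grouping the sum into cyclic orbits: the scalar $\Tr(\alpha_{k_1}\cdots\alpha_{k_m})$ is constant along an orbit, while one rotation step multiplies $e_{k_1}\cdots e_{k_m}$ by the sign of an $m$-cycle, which is $(-1)^{m-1}=-1$ since $m$ is even, so each orbit contributes the factor $\sum_{t=0}^{m-1}(-1)^t=0$; this gives (b). For (c), in $B^{2n}$ the only surviving monomials $e_{k_1}\cdots e_{k_{2n}}$ are those for which $(k_1,\ldots,k_{2n})$ is a permutation $\sigma$ of $(1,\ldots,2n)$, and then $e_{k_1}\cdots e_{k_{2n}}=\sgn(\sigma)\,e_1\wedge\cdots\wedge e_{2n}$; summing over $\sigma\in S_{2n}$ gives (c).

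Next I would conclude the identity. Because $\Lambda^+$ is commutative and $\kk$ has characteristic zero, Newton's identities express the coefficients of the characteristic polynomial $\det(\lambda I-B^2)$ as polynomials in the power sums $\Tr((B^2)^i)=\Tr(B^{2i})$ for $1\le i\le n$, all of which vanish by (b); hence $\det(\lambda I-B^2)=\lambda^n$, and the Cayley--Hamilton theorem over the commutative ring $\Lambda^+$ gives $(B^2)^n=0$, that is $B^{2n}=0$. Since $e_1\wedge\cdots\wedge e_{2n}$ belongs to a $\kk$-basis of $\Lambda$, extracting its coefficient defines a linear map $\Lambda\otimes_\kk M_n(\kk)\to M_n(\kk)$; applying this map to $B^{2n}=0$ and using (c) yields $s_{2n}(\alpha_1,\ldots,\alpha_{2n})=0$.

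For the sharpness statement I would first invoke the standard reduction (valid in characteristic zero via full linearization): if $M_n(\kk)$ satisfied a nonzero identity of degree $d$, it would satisfy a nonzero \emph{multilinear} identity $f=\sum_{\sigma\in S_m}c_\sigma\,x_{\sigma(1)}\cdots x_{\sigma(m)}$ of some degree $m\le d$. Assuming $d\le 2n-1$ and, after relabelling, $c_{\id}\ne 0$, I would substitute the staircase of matrix units $x_{2k-1}=e_{kk}$ and $x_{2k}=e_{k,k+1}$; since $m\le 2n-1$ only indices $\le n$ occur, so these lie in $M_n(\kk)$, the ordered product $x_1x_2\cdots x_m$ telescopes to a single nonzero matrix unit, and a direct check shows $x_{\sigma(1)}\cdots x_{\sigma(m)}=0$ for every $\sigma\ne\id$ (some consecutive pair $e_{ab}e_{cd}$ has $b\ne c$). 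Then $f$ evaluated on this substitution equals $c_{\id}$ times a nonzero matrix unit, a contradiction. The conceptual heart is Rosset's observations (a)--(c); the steps requiring genuine care are the sign bookkeeping in the cyclic-orbit cancellation for (b) and, for the sharpness half, the combinatorial check that only the identity ordering of the staircase survives, together with the routine linearization reduction.
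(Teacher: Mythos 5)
Your proof is correct, but note that the paper itself does not prove this theorem: its ``proof'' is a citation to Drensky--Formanek, so any complete argument is necessarily a different route. What you give is the standard Rosset proof of the vanishing half --- observations (a)--(c) about $B=\sum_i e_i\otimes\alpha_i$ are right, the cyclic-orbit cancellation with the sign $(-1)^{m-1}=-1$ for even $m$ is the correct bookkeeping, and passing from $\Tr(B^{2i})=0$ for $1\le i\le n$ to $(B^2)^n=0$ via Newton's identities and Cayley--Hamilton over the commutative ring $\Lambda^+$ is valid --- together with the classical multilinearization-plus-staircase argument for sharpness, where the check that only the identity ordering of $e_{11},e_{12},e_{22},e_{23},\ldots$ yields a nonzero product is the usual forced-walk argument (the only way into vertex $k$ is $e_{k-1,k}$ and the only way out is $e_{k,k+1}$, so the loop $e_{kk}$ must sit between them). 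The one hypothesis you use that the bare statement does not mention is characteristic zero: both Newton's identities (division by integers) and the reduction to a multilinear identity require $\QQ\subseteq\kk$. This is consistent with the paper, which invokes characteristic zero elsewhere in the same appendix, but it is worth flagging, since the theorem as established in the cited reference holds over arbitrary commutative rings; in positive characteristic one would instead deduce the identity by specialization from the integral case. In exchange, your route is short and self-contained, whereas the paper's citation buys full generality by outsourcing the proof.
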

\begin{proof}
See \cite[Theorem 3.1.4]{drensky-formanek-polynomialIdentityRings}.
\end{proof}

\begin{corollary}
For $n \in \NN$, there exists a non-zero polynomial $t_n \in \kk\<x,y\>$ such that for all $\alpha, \beta \in M_n(\kk)$ we have $t_n(\alpha,\beta) = 0$.
\end{corollary}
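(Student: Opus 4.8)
The plan is to derive the two–variable identity from the Amitsur--Levitzki polynomial $s_{2n}$ by a substitution that collapses its $2n$ variables down to two while preserving non-triviality; the vanishing will then be immediate, and the only real work is checking that the substituted polynomial is not identically zero. Recall that $s_{2n}(x_1,\dots,x_{2n}) = \sum_{\sigma \in S_{2n}} \sgn(\sigma)\, x_{\sigma(1)} \cdots x_{\sigma(2n)}$ vanishes on every $2n$-tuple of matrices in $M_n(\kk)$.

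First I would set $t_n(x,y) := s_{2n}(xy,\, xy^2,\, \dots,\, xy^{2n}) \in \kk\<x,y\>$, that is, substitute $x_i \mapsto x y^i$ for $1 \le i \le 2n$. For any $\alpha,\beta \in M_n(\kk)$ the elements $\alpha\beta,\, \alpha\beta^2,\, \dots,\, \alpha\beta^{2n}$ all lie in $M_n(\kk)$, so by the Amitsur--Levitzki theorem $t_n(\alpha,\beta) = s_{2n}(\alpha\beta,\dots,\alpha\beta^{2n}) = 0$. This disposes of the identity half of the claim with no computation.

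It remains to see that $t_n$ is nonzero as a noncommutative polynomial, i.e.\ that the substitution does not make the $(2n)!$ terms cancel. Expanding, the $\sigma$-term of $t_n$ is $\sgn(\sigma)\, w_\sigma$ where $w_\sigma = x y^{\sigma(1)} x y^{\sigma(2)} \cdots x y^{\sigma(2n)}$, and I would argue that $\sigma \mapsto w_\sigma$ is injective: the word $w_\sigma$ contains exactly $2n$ occurrences of $x$, and the block of consecutive $y$'s immediately following the $k$th occurrence of $x$ has length $\sigma(k)$; since each $\sigma(k) \ge 1$ these blocks are nonempty, so $\sigma$ is recovered from $w_\sigma$. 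Hence distinct permutations contribute distinct monomials, each with coefficient $\pm 1$, so no cancellation occurs and $t_n \ne 0$, completing the proof.

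I do not expect a serious obstacle here; the one point deserving care is the injectivity of $\sigma \mapsto w_\sigma$, and this is precisely the reason for using $xy^i$ (exponent at least $1$) rather than, say, $x^{i-1}y$, which could yield empty or ambiguous blocks. Any substitution of the form $x_i \mapsto x^{a_i} y^{b_i}$ with the pairs $(a_i,b_i)$ chosen so that the resulting words are pairwise distinguishable would serve equally well; the statement is a standard consequence of the fact that $M_n(\kk)$ satisfies a polynomial identity in two variables.
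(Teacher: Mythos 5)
Your proposal is correct and is essentially the paper's own argument: the paper takes $t_n(x,y) = s_{2n}(x, xy, \dots, xy^{2n-1})$ and asserts it is non-zero, while you use the harmless shift $s_{2n}(xy, xy^2, \dots, xy^{2n})$ and supply the (equally valid for either choice) word-recovery argument showing distinct permutations give distinct monomials. No gap; your injectivity check is just the detail the paper leaves implicit.
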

\begin{proof}
The polynomial $t_n(x,y) = s_{2n}(x, xy, ..., xy^{2n-1})$ is non-zero and satisfies the desired property. 
\end{proof}

\begin{proposition} \label{prop: finite dim bints are lindep}
For any formula $A$ such that $\dim\dntn{A} < \infty$, there exist distinct binary integers $T_1, ..., T_r \in \{0,1\}^*$ such that $\dntn{\underline{T_1}}, ..., \dntn{\underline{T_r}}$ are linearly dependent in $\dntn{\tBint_A}$.
\end{proposition}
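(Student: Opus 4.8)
The plan is to reduce the statement to the Amitsur--Levitzki input already assembled above. Fix $A$ with $\dim\dntn{A}=n<\infty$, so that we may identify $\dntn{A\multimap A}$ with $M_n(\kk)$, and take the polynomial $t_n\in\kk\langle x,y\rangle$ furnished by the Corollary, which vanishes on every pair $(\alpha,\beta)\in M_n(\kk)^2$. The crucial observation is that this particular $t_n(x,y)=s_{2n}(x,xy,\ldots,xy^{2n-1})$ is \emph{bihomogeneous}: every monomial occurring in it carries exactly $2n$ copies of $x$ (one at the head of each substituted block $xy^{i-1}$) and exactly $\sum_{i=1}^{2n}(i-1)=2n^2-n$ copies of $y$. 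Write $m=2n^2-n$ and $D=2n+m=2n^2+n$ for the common $x$-count, $y$-count and total degree.

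Next I would set up the dictionary between noncommutative monomials and binary integers. By Proposition \ref{prop-denotationsOfBints}, for a binary string $T=t_1\cdots t_D$ the linear map $\dntn{\underline{T}}$ applied to a pair of vacuum vectors $(\vacu_\alpha,\vacu_\beta)$ equals the composite $\Gamma_D\circ\cdots\circ\Gamma_1$ with $\Gamma_i=\alpha$ if $t_i=0$ and $\Gamma_i=\beta$ if $t_i=1$; as an element of $M_n(\kk)$ this is the value at $(\alpha,\beta)$ of the degree-$D$ monomial $m_T$ obtained by reversing $T$ and substituting $0\mapsto x$, $1\mapsto y$. The assignment $T\mapsto m_T$ is a bijection from $\{0,1\}^D$ onto the set of degree-$D$ monomials in $\kk\langle x,y\rangle$, carrying strings with exactly $m$ zeros to monomials with exactly $m$ occurrences of $x$. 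Hence we may write $t_n=\sum_{T\in\{0,1\}^D}c_T\,m_T$ with $c_T\in\kk$, and by bihomogeneity $c_T=0$ unless $T$ has exactly $m$ zeros and $2n$ ones. Put $\mathcal{S}=\{T : c_T\neq 0\}$; it is nonempty, and in fact $\lvert\mathcal{S}\rvert\ge 2$, since a single nonzero monomial is never a polynomial identity (it is nonzero at $\alpha=\beta=I$).

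To conclude, for all $\alpha,\beta\in M_n(\kk)$ we have
\[
\sum_{T\in\mathcal{S}}c_T\,\dntn{\underline{T}}(\vacu_\alpha,\vacu_\beta)
=\sum_{T\in\mathcal{S}}c_T\,m_T(\alpha,\beta)=t_n(\alpha,\beta)=0 .
\]
Since every $T\in\mathcal{S}$ has the same number of zeros ($m$) and the same number of ones ($2n$), Lemma \ref{lemma-bints-lincomb-equalzero-is-equiv-to-bints-lincomb-on-vacuums-equalzero} promotes this to $\sum_{T\in\mathcal{S}}c_T\dntn{\underline{T}}=0$ in $\dntn{\tBint_A}$, which is a nontrivial linear dependence among the distinct binary integers $\{\underline{T}\}_{T\in\mathcal{S}}$.

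The step I expect to require the most care is the bihomogeneity of $t_n$, since this is precisely what makes Lemma \ref{lemma-bints-lincomb-equalzero-is-equiv-to-bints-lincomb-on-vacuums-equalzero} applicable (had $t_n$ mixed bidegrees, one would first replace it by a nonzero bihomogeneous component, still a polynomial identity because $\kk$ is infinite, via the rescalings $\alpha\mapsto\lambda\alpha$, $\beta\mapsto\mu\beta$); a secondary point to get right is the bookkeeping in the word-to-string dictionary, namely the reversal appearing in Proposition \ref{prop-denotationsOfBints} together with the right-to-left order of composition of linear maps.
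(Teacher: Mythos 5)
Your proof is correct and follows essentially the same route as the paper: both arguments feed the Amitsur--Levitzki identity $t_n(x,y)=s_{2n}(x,xy,\ldots,xy^{2n-1})$ through the dictionary between monomials and binary strings and then invoke the lemma allowing one to pass from vanishing on vacuum vectors to vanishing in $\dntn{\tBint_A}$, which applies because every monomial of $t_n$ has the same number of $x$'s and $y$'s. The only difference is presentational: the paper names the strings explicitly as the block concatenations $R_{\sigma(2n)}\cdots R_{\sigma(1)}$ with $R_i=1^{i-1}0$ and coefficients $\sgn(\sigma)$, whereas you work with the abstract (bihomogeneous) monomial support of $t_n$, adding the small observation that this support has at least two elements.
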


\begin{proof}
Let $n = \dim\dntn{A}$, so that $\dntn{A \multimap A} \cong M_n(k)$. For $1 \leq i \leq 2n$, let $R_i$ be the binary integer $1^{i-1}0$. Note that for all $\alpha, \beta \in M_n(k)$ we have
\begin{align*}
\sum_{\sigma \in S_{2n}} \sgn(\sigma)\dntn{\underline{R_{\sigma(2n)} \cdots R_{\sigma(1)}}}(\vacu_\alpha, \vacu_\beta)
= t_n(\alpha, \beta)
= 0.
\end{align*}
Hence $\sum_{\sigma \in S_{2n}} \sgn(\sigma)\dntn{\underline{R_{\sigma(2n)} \cdots R_{\sigma(1)}}} = 0$ by Lemma \ref{lemma-bints-lincomb-equalzero-is-equiv-to-bints-lincomb-on-vacuums-equalzero}.
\end{proof}

\begin{remark}\label{remark:findgoodA}
Note that despite the above proposition, if we have a \textit{particular} finite collection of binary integers $T_1, ..., T_r$ in mind it is always possible for $A$ atomic to choose $\dntn{A}$ such that $\dntn{\underline{T_1}}, ..., \dntn{\underline{T_r}}$ are linearly independent in $\dntn{\tBint_A}$. To see this, let $d$ denote the maximum length of the $T_s$, and note that a linear dependence relation between the $\dntn{\underline{T_s}}$ gives rise to a polynomial identity for $M_n(\kk)$ of degree $d$, where $n = \dim\dntn{A}$. By Amitsur-Levitzki we must therefore have $d \geq 2n$, so if we choose $\dim\dntn{A} > d/2$ then $\dntn{\underline{T_1}}, ..., \dntn{\underline{T_r}}$ must be linearly independent.
\end{remark}

In addition, while linear independence does not always hold for an arbitrary collection of binary integers, it turns out that we do have linear independence for any \textit{pair} of distinct binary integers, so long as $\dim\dntn{A}$ is at least 2. 

\begin{proposition}
Let $A$ be a formula with $\dim\dntn{A} \geq 2$. The function $\{0,1\}^* \to \dntn{\tBint_A}$ which maps $S$ to $\dntn{\underline{S}}$ is injective.
\end{proposition}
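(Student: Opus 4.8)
The plan is to show that if $S \neq S'$ are binary sequences then $\dntn{\underline{S}} \neq \dntn{\underline{S'}}$, by evaluating both on a carefully chosen pair of vacuum vectors $\vacu_\alpha \otimes \vacu_\beta$ with $\alpha,\beta \in \dntn{A \multimap A}$, using the description of $\dntn{\underline{T}}$ on vacuum vectors from Proposition \ref{prop-denotationsOfBints}. Recall that on vacuum vectors all the injection sets $\Inj([0], T_i)$ are singletons, so $\dntn{\underline{T}}(\vacu_\alpha \otimes \vacu_\beta)$ is simply the composite $\Gamma_n^{} \circ \cdots \circ \Gamma_1^{}$ where $\Gamma_i = \alpha$ if the $i$th digit of $T$ is $0$ and $\Gamma_i = \beta$ if it is $1$; in other words, reading $T$ as a word in the two-letter alphabet $\{\alpha,\beta\}$ (after reversal), $\dntn{\underline{T}}(\vacu_\alpha\otimes\vacu_\beta)$ is the corresponding product of the matrices $\alpha,\beta$ in $M_n(\kk)$, where $n = \dim\dntn{A} \geq 2$.

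So the statement reduces to the following purely algebraic fact: if $S \neq S'$ are distinct words in two letters $x,y$, then there exist $2\times 2$ matrices $\alpha,\beta$ such that the corresponding products $\operatorname{ev}_S(\alpha,\beta)$ and $\operatorname{ev}_{S'}(\alpha,\beta)$ are distinct; equivalently, the monoid homomorphism from the free monoid on $\{x,y\}$ to $M_2(\kk)$ determined by a suitable choice of $\alpha,\beta$ is injective. (One must be slightly careful about the reversal and about sequences of different lengths, but length can be detected first — see below.) The classical way to do this is to take
\[
\alpha = \begin{pmatrix} 1 & 0 \\ 0 & 0 \end{pmatrix}, \qquad \beta = \begin{pmatrix} 0 & 1 \\ 1 & 0 \end{pmatrix}
\]
or, better for injectivity, a pair realising a faithful representation of the free monoid. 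A cleaner route: first note that if $S,S'$ have different lengths, say $|S| < |S'|$, then evaluating on $\alpha = \beta = \lambda \cdot \mathrm{id}$ gives $\lambda^{|S|}\,\mathrm{id}$ versus $\lambda^{|S'|}\,\mathrm{id}$, which differ for generic $\lambda$ (here we use $\dim\dntn{A}\ge 1$ and $\kk$ infinite). So we may assume $|S| = |S'| = \ell$, and then it suffices to find $\alpha,\beta\in M_2(\kk)$ such that the $2^\ell$ length-$\ell$ words in $\alpha,\beta$ are pairwise distinct. For this one can use the standard fact that the matrices
\[
\alpha = \begin{pmatrix} 1 & 1 \\ 0 & 1 \end{pmatrix}, \qquad \beta = \begin{pmatrix} 1 & 0 \\ 1 & 1 \end{pmatrix}
\]
generate a free monoid (indeed a free group on two generators, after Sanov), so the evaluation map on words is injective; in particular it separates any two distinct words of equal length. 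Reversal is harmless since $S\mapsto S^{\mathrm{rev}}$ is a bijection.

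The main obstacle is really just locating the right elementary input: one needs a pair $\alpha,\beta\in M_2(\kk)$ whose induced map on the free monoid is injective, and then carefully matching the combinatorics (reversal of $T$, the role of $T_0,T_1$, the fact that $\dntn{\underline 0}$ is $\tInt_A$-flavoured in the $s=2$ identification) to the formula in Proposition \ref{prop-denotationsOfBints}. Once that input is fixed, the argument is immediate: distinct $S,S'$ are first separated by length using scalar matrices, and if of equal length they are separated by evaluating at the free pair $\alpha,\beta$, giving $\dntn{\underline S}(\vacu_\alpha\otimes\vacu_\beta) \neq \dntn{\underline{S'}}(\vacu_\alpha\otimes\vacu_\beta)$ and hence $\dntn{\underline S}\neq\dntn{\underline{S'}}$ in $\dntn{\tBint_A}$. (Alternatively, equal-length injectivity also follows from Lemma \ref{lemma-bints-lincomb-equalzero-is-equiv-to-bints-lincomb-on-vacuums-equalzero} together with Amitsur--Levitzki as in Remark \ref{remark:findgoodA}, but only when the two sequences have the same number of zeros and ones; the direct free-monoid argument avoids that restriction and also handles $\dim\dntn{A}$ exactly $2$.)
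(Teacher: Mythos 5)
Your proposal is correct and follows essentially the same route as the paper: both reduce to evaluating $\dntn{\underline{S}}$ on a pair of vacuum vectors $\vacu_\alpha \otimes \vacu_\beta$ and then choose $\alpha,\beta$ from which the word can be uniquely recovered --- the paper pads Sanov's generators (unipotent matrices with off-diagonal entry $2$) to $n\times n$ and uses that they generate a free \emph{group}, while you use the free \emph{monoid} generated by the unipotent matrices with entry $1$, together with a preliminary length-separation step that is actually redundant once monoid freeness is available. One small correction: your parenthetical is off, since Sanov's theorem concerns the matrices with entry $2$, and your pair with entry $1$ generates $SL_2(\ZZ)$ as a group, which is not free; but the free-monoid statement you actually invoke is a true standard fact, so the argument stands (and, as in the paper, for $\dim\dntn{A}>2$ you should embed the $2\times 2$ pair into $\dntn{A\multimap A}$ by extending by the identity).
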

\begin{proof}
Let $n = \dim\dntn{A}$. For simplicity of notation we suppose that $n$ is finite, as the case where $n$ is infinite is similar. Consider the subgroup $G$ of $GL_n(\kk)$ generated by
\[
\alpha = \begin{bmatrix}
1 & 2 & 0 & \cdots & 0 \\ 0 & 1 & 0 & \cdots & 0 \\ 0 & 0 & 1 & \cdots & 0 \\ \vdots & \vdots & \vdots & \ddots & \vdots \\ 0 & 0 & 0 & \cdots & 1
\end{bmatrix}
\text{ and }
\beta = \begin{bmatrix}
1 & 0 & 0 & \cdots & 0 \\ 2 & 1 & 0 & \cdots & 0 \\ 0 & 0 & 1 & \cdots & 0 \\ \vdots & \vdots & \vdots & \ddots & \vdots \\ 0 & 0 & 0 & \cdots & 1
\end{bmatrix}.
\]
It is well known that $G$ is freely generated by $\alpha$ and $\beta$; see \cite[\S II.B]{harpe-topicsFromGeometricGroupTheory}. Suppose that $\dntn{\underline{S}} = \dntn{\underline{T}}$, so that in particular we have $\dntn{\underline{S}}(\vacu_\alpha, \vacu_\beta) = \dntn{T}(\vacu_\alpha, \vacu_\beta)$.  In other words, the composite obtained by substituting $\alpha$ for zero and $\beta$ for one into the digits of $S$ is equal to the corresponding composite for $T$. Since $\alpha$ and $\beta$ generate a free group, it follows that $S = T$.
\end{proof}

\begin{proposition}
Let $A$ be a formula with $\dim\dntn{A} \geq 2$, and let $S, T \in \{0,1\}^*$ with $S \neq T$. The denotations $\dntn{\underline{S}}, \dntn{\underline{T}}$ are linearly independent in $\dntn{\tBint_A}$.
\end{proposition}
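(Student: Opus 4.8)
The plan is to reduce the statement to the injectivity of $S \mapsto \dntn{\underline S}$ established in the previous proposition, by a short degree‑counting argument. Suppose, for contradiction, that $\dntn{\underline S}$ and $\dntn{\underline T}$ are linearly dependent. First I note that neither of them is the zero map: evaluating $\dntn{\underline S}$ on the group‑like element $\vacu_{\id} \otimes \vacu_{\id}$, where $\id \in \dntn{A \multimap A}$ denotes the identity of the nonzero space $\dntn A$, returns the identity map, which is nonzero. Hence a nontrivial dependence relation must have the form $\dntn{\underline S} = \lambda\, \dntn{\underline T}$ for some $\lambda \in \kk^\times$.

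Next, for a scalar $t \in \kk$ write $t\,\id \in \dntn{A \multimap A}$ for $t$ times the identity of $\dntn A$. By the description of the denotation of a binary integer recalled above (the case of empty kets in Proposition \ref{prop-denotationsOfBints}), $\dntn{\underline S}\big(\vacu_{t\,\id} \otimes \vacu_{t\,\id}\big)$ is the composite of $\abs{S}$ copies of $t\,\id$, that is, $t^{\abs{S}}\,\id$, and similarly for $T$. Applying the equality $\dntn{\underline S} = \lambda\, \dntn{\underline T}$ to the argument $\vacu_{t\,\id} \otimes \vacu_{t\,\id}$ and using $\dntn A \ne 0$, we obtain $t^{\abs{S}} = \lambda\, t^{\abs{T}}$ for every $t \in \kk$. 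Since $\kk$ is algebraically closed, hence infinite, the two polynomials $x^{\abs{S}}$ and $\lambda\,x^{\abs{T}}$ coincide, which forces $\abs{S} = \abs{T}$ and $\lambda = 1$. Therefore $\dntn{\underline S} = \dntn{\underline T}$, and by the previous proposition (whose hypothesis $\dim\dntn A \ge 2$ is in force) we conclude $S = T$, contradicting the assumption $S \ne T$.

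I do not expect a real obstacle here: the only points needing care are that $\vacu_{t\,\id} \otimes \vacu_{t\,\id}$ is a legitimate argument for $\dntn{\underline S}$ and that it is sent to $t^{\abs{S}}\,\id$, both of which are immediate from the explicit descriptions of ${!}V$ and of the denotations of binary integers given in the Background section and Appendix \ref{section:ints_and_bints}. If one prefers an argument not invoking the previous proposition, one can instead evaluate at $\vacu_\alpha \otimes \vacu_\beta$ for matrices $\alpha,\beta$ generating a free subgroup of $GL(\dntn A)$ as in its proof: from $\dntn{\underline S} = \lambda\,\dntn{\underline T}$ one gets $w_S w_T^{-1} = \lambda\cdot\id$ in $GL(\dntn A)$, taking determinants on a $2$‑dimensional invariant subspace shows $\lambda$ is a root of unity, and torsion‑freeness and faithfulness of the free group then give $\lambda = 1$ and $w_S = w_T$, hence $S = T$; but the degree‑counting route above is shorter.
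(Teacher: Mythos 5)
Your argument is correct, and it takes a genuinely different route from the paper's. The paper reuses the free-group machinery from the preceding proposition: with $\alpha,\beta$ the unipotent generators of a free subgroup $G \le GL_n(\kk)$, a dependence $a\dntn{\underline{S}} + b\dntn{\underline{T}} = 0$ makes $\dntn{\underline{S}}(\vacu_\alpha,\vacu_\beta)\circ\dntn{\underline{T}}(\vacu_\alpha,\vacu_\beta)^{-1}$ a scalar matrix lying in $G$, hence central in $G$, hence trivial since free groups of rank $2$ have trivial centre; this forces $a=-b$, so $\dntn{\underline{S}}=\dntn{\underline{T}}$, and injectivity finishes the proof. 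You instead evaluate on the group-like elements $\vacu_{t\,\id}\otimes\vacu_{t\,\id}$, which by the vacuum-vector formula of Proposition \ref{prop-denotationsOfBints} yields $t^{\abs{S}}\,\id = \lambda\,t^{\abs{T}}\,\id$ for all $t$, and since an algebraically closed field is infinite this forces $\abs{S}=\abs{T}$ and $\lambda=1$, after which you invoke the same injectivity proposition. Your route is more elementary (no appeal to centres of free groups beyond what is already hidden in the injectivity statement), it isolates exactly where the hypothesis $\dim\dntn{A}\ge 2$ is used (only through injectivity), and it gives the extra information $\abs{S}=\abs{T}$ for free; the paper's route buys economy, recycling the specific matrices and group-theoretic facts already set up in the previous proof. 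Both arguments rest on the same two pillars, namely the evaluation formula on vacuum vectors and the injectivity of $S\mapsto\dntn{\underline{S}}$, so your proof would slot into the paper essentially unchanged.
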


\begin{proof}
Suppose we are given $S,T \in \{0,1\}^*$ such that $a \dntn{\underline{S}} + b \dntn{\underline{T}} = 0$ for some $a,b \neq 0$. With $\alpha, \beta$ as above, it follows that
\[
\dntn{\underline{S}}(\vacu_\alpha, \vacu_\beta) \circ \dntn{\underline{T}}(\vacu_\alpha, \vacu_\beta)^{-1} = -\frac{b}{a}I
\]
So $\dntn{\underline{S}}(\vacu_\alpha, \vacu_\beta) \circ \dntn{\underline{T}}(\vacu_\alpha, \vacu_\beta)^{-1}$ is in the center of $G$, which is trivial since $G$ is free of rank $2$, and hence $a=-b$. It follows that $\dntn{\underline{S}} = \dntn{\underline{T}}$ and therefore $S = T$ by the previous proposition.
\end{proof}

\bibliographystyle{amsalpha}
\providecommand{\bysame}{\leavevmode\hbox to3em{\hrulefill}\thinspace}
\providecommand{\href}[2]{#2}

\end{document}